\numberwithin{equation}{section}
\let\al=\alpha
\let\b=\beta
\let\d=\delta
\let\e=\varepsilon
\let\f=\frac
\let\om=\omega
\let\na=\nabla
\let\pa=\partial
\def\cE{{\mathcal E}}
\def\cF{{\mathcal F}}
\def\cG{{\cal G}}
\def\cH{{\mathcal H}}
\def\cL{{\mathcal L}}
\def\cM{{\mathcal M}}
\def\cG{{\mathcal G}}
\def\R{\mathbb R}
\def\C{\mathbb C}
\def\N{\mathbb N}
\def\T{\mathbb T}
\newcommand{\beq}{\begin{equation}}
\newcommand{\eeq}{\end{equation}}
\newcommand{\ben}{\begin{eqnarray}}
\newcommand{\een}{\end{eqnarray}}
\newcommand{\beno}{\begin{eqnarray*}}
\newcommand{\eeno}{\end{eqnarray*}}
\newtheorem{theorem}{Theorem}[section]
\newtheorem{lemma}[theorem]{Lemma}
\newtheorem{proposition}[theorem]{Proposition}
\begin{document}

\title[The Oseen vortices operator]
{Pseudospectral and spectral bounds for the Oseen vortices operator}

\author{Te Li}
\address{School of Mathematical Science, Peking University, 100871, Beijing, P. R. China}
\email{little329@163.com}

\author{Dongyi Wei}
\address{School of Mathematical Science, Peking University, 100871, Beijing, P. R. China}
\email{jnwdyi@163.com}

\author{Zhifei Zhang}
\address{School of Mathematical Science, Peking University, 100871, Beijing, P. R. China}
\email{zfzhang@math.pku.edu.cn}

\date{\today}

\maketitle

\begin{abstract}
In this paper, we solve Gallay's conjecture on the spectral lower bound and pseudospecrtal bound
for the linearized operator of the Navier-Stokes equation in $\R^2$ around rapidly rotating Oseen vortices.
\end{abstract}

\section{introduction}

In this paper, we consider the Navier-Stokes equations in $\R^2$
\ben\label{eq:NS}
\left\{\begin{array}{l}
\pa_t v-\nu\Delta v+v\cdot\na v+\na p=0,\\
\text{div}\,v=0,\\
v(0,x)=v_0(x),
\end{array}\right.
\een
where $v(t,x)$ denotes the velocity, $p(t,x)$ denotes the pressure and $\nu>0$ is the viscosity coefficient.
Let $\om(t,x)=\pa_{2}v^1-\pa_{1}v^2$ be the vorticity. The vorticity formulation of \eqref{eq:NS} takes
\ben\label{eq:NS-w}
\pa_t\om-\nu\Delta \om+v\cdot\na \om=0,\quad \om(0,x)=\om_0(x).
\een
Given the vorticity $\om$, the velocity can be recovered by the Biot-Savart law
\ben
v(t,x)=\f 1 {2\pi}\int_{\R^2}\f {(x-y)^\perp} {|x-y|^2}\om(t,y)dy=K_{BS}\ast \om.
\een

It is well known that the Navier-Stokes equations \eqref{eq:NS-w} has a family of self-similar solutions called Lamb-Oseen vortices of the form
\ben
\om(t,x)=\f \al {\nu t}\cG\Big(\f x {\sqrt{\nu t}}\Big),\quad v(t,x)=\f \al {\sqrt{\nu t}}v^G\Big(\f x {\sqrt{\nu t}}\Big),
\een
where the vorticity profile and the velocity profile are given by
\beno
\cG(\xi)=\f 1 {4\pi}e^{-|\xi|^2/4},\quad v^G(\xi)=\f 1 {2\pi}\f {\xi^\perp} {|\xi|^2}\Big(1-e^{-|\xi|^2/4}\Big).
\eeno
It is easy to see that $\int_{\R^2}\om(t,x)dx=\al$ for any $t>0$. The parameter $\al\in \R$ is called the circulation Reynolds number.

To investigate the long-time behaviour of \eqref{eq:NS-w}, it is convenient to introduce the self-similar variables
\beno
\xi=\f x {\sqrt{\nu t}},\quad \tau=\log t,
\eeno
and the rescaled vorticity $w$ and the rescaled velocity $u$
\beno
\om(t,x)=\f 1 tw\Big(\log t,\f x {\sqrt{\nu t}}\Big),\quad  v(t,x)=\sqrt{\f \nu t}u\Big(\log t, \f x {\sqrt{\nu t}}\Big).
\eeno
Then $(w,u)$ satisfies
\ben\label{eq:NS-Rw}
\pa_\tau w+u\cdot\na w=Lw,
\een
where the linear operator $L$ is given by
\ben
L=\Delta+\f \xi 2\cdot\na+1.
\een
For any $\al\in \R$, the Lamb-Oseen vortex $\al \cG(\xi)$ is a steady solution of \eqref{eq:NS-Rw}.
Gallay and Wayne \cite{GW1, GW2} proved that for the integrable initial vorticity, the long-time behaviour of
the 2-D Navier-Stokes equations can be described by the Lamb-Oseen vortex. More precisely, for any initial data
$w_0\in L^1(\R^2)$, the solution of \eqref{eq:NS-Rw} satisfies
\beno
\lim_{\tau\to+\infty}\big\|w(\tau)-\al \cG\big\|_{L^1(\R^2)}=0,\quad \al=\int_{\R^2}w_0(\xi)d\xi.
\eeno
This result suggests that $\al\cG$ is a stable equilibrium of \eqref{eq:NS-Rw} for any $\al \in \R$.
This situation is very similar to the Couette flow $(y,0)$ in a finite channel, which is stable for any Reynolds number \cite{DR}.
Recently, there are many important works \cite{BM1, BM2, BM3, LZ, WZZ1}
devoted to the study of long-time behaviour of the Navier-Stokes(Euler) equations around the Couette flow.

To study the stability of $\al G$, it is natural to consider the linearized equation around $\al\cG(\xi)$, which takes as follows
\ben
\pa_\tau w=(L-\al\Lambda)w,
\een
where $\Lambda$ is a nonlocal linear operator defined by
\ben\label{def:Lam}
\Lambda w=v^G\cdot\na w+u\cdot\na \cG=\Lambda_1w+\Lambda_2w,\quad u=K_{BS}\ast w.
\een

The operator $L-\al\Lambda$ in the weighted space $Y=L^{2}(\R^2, \cG^{-1}dx)$ defined in section 2 has a compact resolvent. Thus,
the spectrum of $L-\al\Lambda$ in $Y$ is a sequence of eigenvalues $\{\lambda_n(\al)\}_{n\in \N}$ satisfying $\text{Re}\lambda_n(\al)\le 0$ for any $n, \al$.
A very important problem is to study how the spectrum changes as $|\al|\to +\infty$,
which corresponds to the high Reynolds number limit(the most relevant regime for turbulent flows).

The eigenvalues which correspond to the eigenfunctions in the kernel of $\Lambda$ do not change as $\al$ varies. We denote by $L_\perp$ and $\Lambda_\perp$ the restriction of the operators $L$ and $\Lambda$ to the orthogonal complement of $\text{ker}\,\Lambda$ in $Y$.
Then we define the spectral lower bound
\ben
\Sigma(\al)=\inf\Big\{\text{Re}\,z: z\in \sigma\big(-L_\perp+\al\Lambda_\perp\big)\Big\}
\een
and pseudospectral bound
\ben
\Psi(\al)=\Big(\sup_{\lambda\in \R}\|\big(L_\perp-\al\Lambda_\perp-i\lambda\big)^{-1}\|_{Y\to Y}\Big)^{-1}.
\een
For selfadjoint operators, spectral and pesudospectral bounds are the same. Here $L-\al\Lambda$
is a non-selfadjoint operator. It is easy to see that $\Sigma(\al)\ge \Psi(\al)$ for any $\al\in \R$.
In fact, $\Sigma(\al)$ and $\Psi(\al)$ are different. Moreover, the pseudo-spectrum plays
an important role in the hydrodynamic stability \cite{TRD},
and the spectrum theory of non-selfadjoint operator is also a very active topic \cite{Da, DSZ, Tre, TE}.

Maekawa \cite{Mae} proved that $\Sigma(\al)$ and $\Psi(\al)$ tend to infinity as $|\al|\to +\infty$.
However, the proof does not provide explicit bounds on $\Sigma(\al)$ and $\Psi(\al)$.
Numerical calculations performed by Prochazka and Pullin \cite{PP1, PP2} indicate
that $\Sigma(\al)=O(|\al|^\f12)$ as $|\al|\to +\infty$. Based on the analysis for a model problem,
Gallay \cite{Ga} proposed the following conjecture.\smallskip

{\bf Conjecture:}  there exists $C>0$ independent of $\al$ so that as $|\al|\to +\infty$,
\beno
\Sigma(\al)\ge C^{-1}|\al|^\f12,\quad C^{-1}|\al|^\f13\le \Psi(\al)\le C|\al|^\f13.
\eeno

If this conjecture is true, then it shows that the linearized operator $L-\al\Lambda$ becomes highly non-selfadjoint in the fast
rotating limit, and the fast rotation has a strong stabilizing effect on vortices.

To solve this conjecture, Gallagher and Gallay suggested the following model problem (see Villani \cite{Vill} P. 53 and \cite{Vill2}).\smallskip

{\bf Model problem:} identify sufficient condition on $f:\R\to \R$, so that the real parts of the eigenvalues of
\beno
H_\al=-\pa_x^2+x^2+i\al f(x)
\eeno
in $L^2(\R)$ go to infinity as $|\al|\to +\infty$, and estimate this rate.\smallskip

Let $\Sigma(\al)$ be the infimum of the real part of $\sigma(H_\al)$ and $\Psi(\al)^{-1}$ be the supremum of the norm
of the resolvent of $H_\al$ along the imaginary axis. Under the appropriate conditions on $f$, Gallagher, Gallay and Nier \cite{GGN}
proved that $\Sigma(\al)$ and $\Psi(\al)$ go to infinity as $|\al|\to +\infty$,  and presented the precise estimate of the growth rate of $\Psi(\al)$.
Their proof used the hypocoercive method, localization techniques, and semiclassical subelliptic estimates.

For the simplified linearized operator $L-\al \Lambda_1$, Deng \cite{DW1} proved that $\Psi(\al)=O(|\al|^\f13)$.
The same result was proved by Deng \cite{DW2} for the full linearized operator restricted to a smaller subspace than $\text{ker}(\Lambda)^\perp$.
Deng used the multiplier method based on Weyl calculus \cite{Ler}.
\smallskip

The goal of this paper is to give a positive answer on Gallay's conjecture.
The main difficulty comes from the nonlocal operator $\Lambda_2$
so that the hypocoercive method introduced by Villani \cite{Vill} does not work.
In fact, the linearized operator with the nonlocal skew-adjoint operator often appears in the linear stability theory of the incompressible fluids.

In this paper, we develop a method to handle the nonlocal operator. The most key idea is to reduce the nonlocal operator to a model operator by constructing the wave operator. This is motivated by the following simple fact in the scattering theory. Let $A, B$ be two selfadjoint operators in the Hilbert space $H$. Let $U(t)=e^{itA}$ and $V(t)=e^{itB}$ be the strongly continuous   groups of unitary operators. The wave operator is defined by
\beno
W_\pm=\lim_{t\to \pm\infty}W(t),\quad W(t)=U(-t)V(t).
\eeno
Then it holds that
\ben
AW_\pm=W_\pm B.\label{eq:wave}
\een
In fact, we have
\beno
e^{isA}e^{-itA}e^{itB}=e^{-i(t-s)A}e^{i(t-s)B}e^{isB},
\eeno
which gives by taking $t\to \pm \infty$ that
\beno
e^{isA}W_\pm=W_\pm e^{isB}.
\eeno
Then the identity \eqref{eq:wave} follows by taking the derivative in $s$ at $s=0$.

In a joint work of the last two authors and Zhao \cite{WZZ2}, we use similar ideas to prove the optimal enhanced dissipation rate for the linearized Navier-Stokes equations in $\T^2$ around the Kolmogorov flow.

\section{Spectral analysis of the linearized operator}

In this section, we recall some facts about the spectrum of the linearized operator $L-\al \Lambda$ from \cite{GW1, GW2, Ga, GM}. Although these facts will not be used in our proof, they will be helpful to understand this spectral problem.
\smallskip

Let $\rho(\xi)$ be a nonnegative function. We introduce the weighted $L^2$ space
\beno
L^2(\R^2, \rho d\xi)=\Big\{w\in L^2(\R^2): \|w\|_{L^2(\rho)}^2=\int_{\R^2}|w(\xi)|^2\rho(\xi)d\xi<+\infty\Big\},
\eeno
which is a (real) Hilbert space equipped with the scalar product
\beno
\langle w_1, w_2\rangle_{L^2{(\rho)}}=\int_{\R^2}w_1(\xi)w_2(\xi)\rho(\xi)d\xi.
\eeno
We denote $Y=L^2(\R^2, \cG^{-1}d\xi)$.

\begin{lemma}
It holds that
\begin{itemize}
\item[1.] the operator $L$ is selfadjoint in $Y$ with compact resolvent and purely discrete spectrum
\ben\label{eq:L-sp}
\sigma(L)=\big\{-\f n 2: n=0,1,2,\cdots\big\}.
\een

\item[2.] the operator $\Lambda$ is skew-symmetric in Y.
\end{itemize}
\end{lemma}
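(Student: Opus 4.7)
The plan for the first assertion is to conjugate $L$ in $Y$ to a harmonic-oscillator Hamiltonian on $L^2(\R^2)$. Define the isometry $T\colon L^2(\R^2)\to Y$ by $Tf=\cG^{1/2}f$. A direct computation using $\cG(\xi)=(4\pi)^{-1}e^{-|\xi|^2/4}$ gives
\[
T^{-1}LT=\Delta-\f{|\xi|^2}{16}+\f12=\f12-H,\qquad H=-\Delta+\f{|\xi|^2}{16}.
\]
Standard spectral theory of the 2D harmonic oscillator ensures that $H$ is selfadjoint on $L^2(\R^2)$ with compact resolvent and pure point spectrum $\{(m+1)/2:m\in\N\}$, and transporting through $T$ yields selfadjointness, compactness of the resolvent, and \eqref{eq:L-sp} for $L$ on $Y$.

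For the second assertion, split $\Lambda=\Lambda_1+\Lambda_2$ as in \eqref{def:Lam}, work with Schwartz data, and extend by density. For the transport piece $\Lambda_1w=v^G\cdot\nabla w$, one integration by parts in the weighted inner product reduces the identity $\langle\Lambda_1 w_1,w_2\rangle_Y+\langle w_1,\Lambda_1 w_2\rangle_Y=0$ to the vanishing of $\int_{\R^2}w_1w_2\,v^G\cdot\nabla\cG^{-1}\,d\xi$; and this holds because $v^G$ is azimuthal while $\cG^{-1}$ is radial, so $v^G\cdot\nabla\cG^{-1}\equiv 0$, and no additional correction arises since $\dive v^G=0$.

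For the nonlocal piece $\Lambda_2w=u\cdot\nabla\cG$ with $u=K_{BS}\ast w$, the strategy is to pass to stream functions $\psi_i$ defined by $u_i=\nabla^\perp\psi_i$ and $-\Delta\psi_i=w_i$. Using $\nabla\cG=-(\xi/2)\cG$ together with the elementary identity $\nabla^\perp\psi\cdot\xi=-\xi^\perp\cdot\nabla\psi$, the pairing collapses to
\[
\langle\Lambda_2 w_1,w_2\rangle_Y=\f12\int_{\R^2}(\xi^\perp\cdot\nabla\psi_1)\,w_2\,d\xi=-\f12\int_{\R^2}(\xi^\perp\cdot\nabla\psi_1)\,\Delta\psi_2\,d\xi.
\]
The rotation generator $\xi^\perp\cdot\nabla$ is skew-adjoint on $L^2(\R^2)$ (since $\dive\xi^\perp=0$) and commutes with $\Delta$ by rotational invariance; combining these facts, two integrations by parts transport $\xi^\perp\cdot\nabla$ from $\psi_1$ to $\psi_2$ with a sign change, so the right-hand side is antisymmetric in $(\psi_1,\psi_2)$, and hence in $(w_1,w_2)$.

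The main obstacle is this last step: the Biot-Savart nonlocality must first be exposed as a bilinear form in the stream functions before any symmetry argument can run, and the antisymmetry then rests on the commutation $[\Delta,\xi^\perp\cdot\nabla]=0$, i.e., on the rotational symmetry of the Gaussian weight. The remaining ingredients are a textbook harmonic-oscillator calculation for part 1 and a one-line integration by parts for $\Lambda_1$.
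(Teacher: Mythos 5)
Your proof is correct, and part 1 follows exactly the conjugation the paper indicates (the paper only states the facts, with the hint $\cL=-\cG^{-1/2}L\cG^{1/2}=-\Delta+|\xi|^2/16-1/2$, and otherwise defers to the cited references [GW1, GW2, Ga, GM]). Writing $T^{-1}LT=\f12-H$ with $H=-\Delta+|\xi|^2/16$ the 2D isotropic oscillator of frequency $\om=1/4$, the spectrum $\om(2n_1+2n_2+2)=(m+1)/2$ gives $\sigma(L)=\{-m/2\}$ as required.

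For part 2 the paper gives no argument at all, so your stream-function computation is the interesting content. The $\Lambda_1$ part is exactly as you say: after integration by parts the only extra terms are $\dive v^G=0$ and $v^G\cdot\na\cG^{-1}$, and the latter vanishes because $v^G$ is azimuthal while $\cG^{-1}$ is radial. For $\Lambda_2$, using $\na\cG=-\tfrac{\xi}{2}\cG$ and $u_i=\na^\perp\psi_i$ with $-\Delta\psi_i=w_i$ to reduce the pairing to $-\tfrac12\int(R\psi_1)\Delta\psi_2\,d\xi$ with $R=\xi^\perp\cdot\na$ is the right move; the antisymmetry of this bilinear form then does follow from $R^*=-R$ on $L^2(d\xi)$ (since $\dive\xi^\perp=0$) and $[R,\Delta]=0$ (rotational invariance of $\Delta$). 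The one point worth flagging is the justification of the integrations by parts: for generic $w\in Y\cap\cS$ the stream function $\psi$ can grow logarithmically when $\int w\neq 0$, so one should first establish the identity on a dense class (e.g. $w$ with $\int w\,d\xi=0$, where $\psi$ decays), or equivalently carry the vanishing boundary terms explicitly; since $\Lambda$ annihilates $\cG$ and the radial part of $X_0$ anyway, this is a mild technicality and does not affect the result. Your approach is a clean, self-contained alternative to simply citing Gallay--Wayne.
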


The first fact follows from the following observation:
\ben\label{def:cL}
\cL=-\cG^{-\f12}L\cG^\f12=-\Delta+\f {|\xi|^2} {16}-\f12
\een
is a two-dimensional harmonic oscillator, which is self-adjoint in $L^2(\R^2)$ with compact resolvent and discrete spectrum
given by $-\sigma(L)$. Furthermore, we know that
\begin{itemize}

\item[1.] $\lambda_0=0$ is a simple eigenvalue of $L$ with the eigenfunction $\cG$;

\item[2.] $\lambda_1=-\f12$ is an eigenvalue of $L$ of multiplicity two with the eigenfunctions $\pa_1 \cG$ and $\pa_2 \cG$;

\item[3.] $\lambda_1=-1$ is an eigenvalue of $L$ of multiplicity three with the eigenfunctions $\Delta \cG, (\pa_1^2-\pa_2^2)\cG$ and $\pa_1\pa_2\cG$.

\end{itemize}

Now we consider the spectrum of $L-\al\Lambda$ in $Y$ for any fixed $\al\in \R$. Since $\Lambda$ is a
relatively compact perturbation of $L$ in $Y$, $L-\al\Lambda$ has a compact resolvent in $Y$ by the classical perturbation theory \cite{Kato}.
So, the spectrum of $L-\al\Lambda$ is a sequence of eigenvalues $\{\lambda_n(\al)\}_{n\in \N}$. Using the fact that
\beno
\Lambda w=0\quad \text{for}\quad w=\cG, \pa_1\cG, \pa_2 \cG, \Delta \cG,
\eeno
we deduce that $0, -\f12, -1$ are also eigenvalues of $L-\al\Lambda$ for any $\al\in \R$. Let us introduce the following subspaces
of $X$:
\beno
&&{Y_0}=\Big\{w\in Y: \int_{\R^2}w(\xi)d\xi=0\Big\}=\big\{\cG\big\}^{\perp},\\
&&{Y_1}=\Big\{w\in {Y_0}: \int_{\R^2}\xi w(\xi)d\xi=0\Big\}=\big\{\cG, \pa_1 \cG, \pa_2 \cG\big\}^{\perp},\\
&&{Y_2}=\Big\{w\in {Y_1}: \int_{\R^2}|\xi|^2w(\xi)d\xi=0\Big\}=\big\{\cG, \pa_1 \cG, \pa_2 \cG, \Delta \cG\big\}^{\perp}.
\eeno
These spaces are invariant under the linear evolution generated by $L-\al \Lambda$.

The following proposition shows that the Oseen vortex $\al\cG$ is spectrally stable in $Y$ for any $\al\in \R$.

\begin{proposition}
For any $\al\in \R$, the spectrum of $L-\al \Lambda$ satisfies
\beno
&&\sigma(L-\al\Lambda)\subset \big\{z\in\C: \text{Re}\, z\le 0\big\}\qquad \text{in}\quad\, Y,\\
&&\sigma(L-\al\Lambda)\subset \big\{z\in\C: \text{Re}\, z\le -\f12\big\}\quad\, \text{in}\quad {Y_0},\\
&&\sigma(L-\al\Lambda)\subset \big\{z\in\C: \text{Re}\, z\le -1\big\}\quad\, \text{in}\quad {Y_1},\\
&&\sigma(L-\al\Lambda)\subset \big\{z\in\C: \text{Re}\, z<-1\big\}\quad\, \text{in}\quad {Y_2}.
\eeno
\end{proposition}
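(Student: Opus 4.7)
The plan is to exploit the decomposition of $L-\al\Lambda$ into a self-adjoint part $L$ and a skew-symmetric part $\al\Lambda$ on $Y$. For any eigenpair $(L-\al\Lambda)w=\lambda w$ with $w\neq 0$, skew-symmetry of $\Lambda$ gives $\langle \Lambda w,w\rangle_Y\in i\R$, so
\beno
\text{Re}\,\lambda\cdot\|w\|_Y^2=\text{Re}\,\langle (L-\al\Lambda)w,w\rangle_Y=\langle Lw,w\rangle_Y.
\eeno
Via the spectral theorem for the self-adjoint operator $L$, this reduces bounding $\text{Re}\,\lambda$ to bounding $\sup\sigma(L|_{Y_*})$ on each relevant invariant subspace.

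Next I would record that $Y_0$, $Y_1$, $Y_2$ are invariant under $L-\al\Lambda$ for every $\al$: the four generators $\cG,\,\pa_1\cG,\,\pa_2\cG,\,\Delta\cG$ all lie in $\ker\Lambda$ and are eigenfunctions of the self-adjoint $L$, so their spans, and hence their $Y$-orthogonal complements, are simultaneously invariant under both $L$ and $\Lambda$. Since $L-\al\Lambda$ has compact resolvent (from the relative compactness of $\Lambda$ with respect to $L$ recalled just before the statement), its spectrum reduces to a discrete set of eigenvalues on each subspace, so the identity above applies to every eigenvector. Reading off $\sigma(L)=\{-n/2\}_{n\ge 0}$ together with the explicit eigenfunctions yields $\sup\sigma(L|_Y)=0$, $\sup\sigma(L|_{Y_0})=-\f12$, and $\sup\sigma(L|_{Y_1})=-1$, which gives the first three inclusions immediately.

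The main obstacle is the strict inequality $\text{Re}\,z<-1$ on $Y_2$: removing only $\Delta\cG$ still leaves the two-dimensional $(-1)$-eigenspace of $L|_{Y_2}$, namely $E=\text{span}\{(\pa_1^2-\pa_2^2)\cG,\,\pa_1\pa_2\cG\}$, carried by the azimuthal modes $m=\pm 2$, so the energy identity alone delivers only $\text{Re}\,\lambda\le-1$. If equality were attained, the equality case of the spectral bound forces the eigenvector $w$ to lie in $E$, and the eigenvalue equation reduces to $\al\Lambda w=-(1+\lambda)w$; that is, $w$ must be an eigenvector of $\Lambda$ sitting inside $E$. Since $\Lambda$ commutes with rotations, it preserves each angular mode, so it suffices to show that $W_\pm:=(\pa_1\pm i\pa_2)^2\cG$ is not an eigenfunction of $\Lambda$. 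This can be verified by writing $\Lambda_1 W_+$ as pointwise multiplication by the non-constant radial function $\f{i}{\pi r^2}(1-e^{-r^2/4})$ and computing $\Lambda_2 W_+$ from the radial Biot--Savart ODE on mode $m=2$; the resulting radial profile of $\Lambda W_+$ is not a scalar multiple of that of $W_+$, producing the desired contradiction for $\al\neq 0$.
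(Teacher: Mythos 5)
The paper does not prove this proposition: it opens Section~2 with ``we recall some facts about the spectrum of the linearized operator $L-\al\Lambda$ from \cite{GW1, GW2, Ga, GM}. Although these facts will not be used in our proof\dots'' So there is no in-paper argument to compare against; I can only judge your proposal on its own terms. Your energy identity $\text{Re}\,\lambda\,\|w\|_Y^2=\langle Lw,w\rangle_Y$ together with the invariance of $Y_0,Y_1,Y_2$ (which follows, as you say, because $\cG,\pa_1\cG,\pa_2\cG,\Delta\cG$ are simultaneously eigenvectors of the self-adjoint $L$ and elements of $\ker\Lambda$, so their joint orthogonal complements are preserved by both $L$ and $\Lambda$) correctly yields the first three inclusions, and this is the standard argument implicit in the references. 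Your diagnosis of the obstruction on $Y_2$ is also exactly right: removing $\Delta\cG$ does not remove the $n=\pm 2$ part of the $(-1)$-eigenspace, so coercivity alone gives only $\text{Re}\,\lambda\le-1$. Two caveats, though. First, your non-degeneracy step is only sketched: you assert that the mode-$2$ Biot--Savart stream function does not match the profile forced by $\al\Lambda w=-(1+\lambda)w$, but you do not actually solve $(\pa_r^2+\tfrac1r\pa_r-\tfrac4{r^2})\phi=\tfrac{r^2}{4}\cG$ and compare. (For the record, the forced profile would be $\phi=\tfrac1{4\pi}(1-e^{-r^2/4})$ up to an additive multiple of $r^2$, which does not decay at infinity, whereas the Biot--Savart solution must; so the contradiction does go through, but that verification needs to appear.) Second, and more importantly, your argument genuinely needs $\al\neq 0$, and rightly so: for $\al=0$ the operator is just $L$, and $-1\in\sigma(L|_{Y_2})$ since $(\pa_1^2-\pa_2^2)\cG\in Y_2$, so the strict inclusion as literally stated ``for any $\al\in\R$'' is false at $\al=0$. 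You should say explicitly that the strict bound holds only for $\al\neq 0$; this is not a defect in your proof but a small imprecision in the proposition you were given.
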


The operator $L-\al\Lambda$ is invariant under rotations with respect to the origin. Thus, it is natural to introduce the polar coordinates $(r,\theta)$
in $\R^2$.  Let us decompose
\ben
Y=\oplus_{n\in\N}X_n,
\een
where $X_n$ denote the subspace of all $w\in Y$
so that
\beno
w(r\cos\theta, r\sin\theta)=a(r)\cos(n\theta)+b(r)\sin(n\theta)
\eeno
for some radial functions $a, b:\R^+\to \R$.

\begin{lemma}
$\text{ker}\,\Lambda=X_0\oplus\big\{\al\pa_1\cG+\beta\pa_2\cG\big\}$. In particular, $\text{ker}\,\Lambda^\perp\subset{\oplus_{n>0}X_n}$.
\end{lemma}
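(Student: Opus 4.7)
The plan is to prove $\ker\Lambda = X_0\oplus\{\al\pa_1\cG+\b\pa_2\cG\}$ by (i) verifying the easy inclusion from each summand, and (ii) using rotational invariance of $\Lambda$ to decompose the space into angular modes and exclude any further kernel elements. The auxiliary statement $\ker\Lambda^\perp\subset\oplus_{n>0}X_n$ then follows immediately, since $X_0\subset\ker\Lambda$ gives $(\ker\Lambda)^\perp\subset X_0^\perp = \oplus_{n>0}X_n$.

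For the easy direction: if $w\in X_0$ is radial, then $\na w$ is radial while $v^G$ is tangential, so $v^G\cdot\na w=0$; by rotational symmetry $u=K_{BS}*w$ is also tangential, hence $u\cdot\na\cG=0$. For $w=\pa_i\cG$, using $K_{BS}*\pa_i\cG = \pa_iv^G$ and $v^G\cdot\na\cG=0$ one gets
\[
\Lambda\pa_i\cG = v^G\cdot\na\pa_i\cG+\pa_iv^G\cdot\na\cG = \pa_i(v^G\cdot\na\cG)=0.
\]

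For the reverse inclusion: $\Lambda$ commutes with rotations (since $v^G,\cG$ are radial), so it preserves each $X_n$, and it suffices to study $\ker\Lambda\cap X_n$ for $n\geq 1$. Decomposing an element of $X_n$ into its $\cos n\theta,\sin n\theta$ components and working with complex-valued radial profiles, the stream function $\psi_n$ satisfies $w_n=-\Delta_n\psi_n$ with $\Delta_n=\pa_r^2+r^{-1}\pa_r-n^2/r^2$. Using the angular velocity $V(r)=(1-e^{-r^2/4})/(2\pi r^2)$ and $\cG'(r)/r=-\cG/2$, a direct computation reduces $\Lambda w=0$ to the single radial ODE
\[
\Delta_n\psi_n + \f{\cG(r)}{2V(r)}\psi_n = 0.
\]
For $n\geq 2$, testing against $\bar\psi_n$ with measure $r\,dr$ and integrating by parts yields
\[
\int_0^\infty\Big(|\psi_n'|^2 + \f{n^2}{r^2}|\psi_n|^2 - \f{\cG(r)}{2V(r)}|\psi_n|^2\Big)\,r\,dr = 0.
\]
The pointwise estimate $r^2\cG(r)/(2V(r))<4$---equivalent, after $t=r^2/4$, to $e^t-1>t^2$ for $t>0$, which follows from monotonicity of $t\mapsto e^t-1-t^2$ (its derivative $e^t-2t$ attains its positive minimum at $t=\ln 2$)---makes the integrand strictly positive wherever $\psi_n\neq0$, forcing $\psi_n\equiv 0$. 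Thus $\ker\Lambda\cap X_n=\{0\}$ for $n\geq 2$.

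For $n=1$, the explicit function $\psi_1^{(1)}(r)=-rV(r)$, the stream function of $\pa_1\cG$ (its $\sin\theta$-counterpart handling $\pa_2\cG$), satisfies the ODE. By reduction of order, the general solution is $C_1\psi_1^{(1)}+C_2\psi_1^{(1)}\!\int^r\!ds/(s\,\psi_1^{(1)}(s)^2)$; a Frobenius analysis at $r=0$, where $\psi_1^{(1)}(r)\sim -r/(8\pi)$, shows the second branch blows up like $1/r$, yielding a vorticity with a distributional dipole at the origin that is incompatible with $w\in Y$. This forces $C_2=0$, so $\ker\Lambda\cap X_1=\text{span}\{\pa_1\cG,\pa_2\cG\}$.

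The main obstacle is the $n=1$ case: the potential $\cG/(2V)-1/r^2$ changes sign (were it nonpositive, the kernel on $X_1$ would be trivial, contradicting $\pa_i\cG\in\ker\Lambda$), so a coercivity argument cannot close on its own. One must combine the explicit kernel element with a careful Frobenius analysis of the second branch at the origin, using the ambient space $Y$ to rule it out; the $n\geq 2$ case, by contrast, is reduced to the elementary inequality $e^t-1>t^2$.
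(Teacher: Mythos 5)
Your proof is essentially the standard one from the literature; the paper itself does not prove this lemma but only recalls it from the references \cite{GW1, GW2, Ga, GM}, so there is no in-paper argument to compare against. Your route --- decompose into angular modes $X_n$, reduce $\Lambda w=0$ for $w\in X_n$ to a Rayleigh-type ODE for the stream function $\psi_n$, use coercivity of the quadratic form for $n\ge 2$ via the potential bound $r^2\cG/(2V)<4\Leftrightarrow e^t-1>t^2$, and treat $n=1$ by exhibiting the explicit regular solution $-rV(r)$ and ruling out the singular second Frobenius branch --- is precisely the argument used in Gallay's lecture notes and in Gallay--Wayne, and it is correct.

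Two points are worth tightening if you want this to be fully rigorous. First, in the $n\ge 2$ integration by parts you silently discard the boundary terms $[r\,\psi_n'\bar\psi_n]_0^\infty$; this does hold, because $w\in Y$ carries Gaussian decay (so $\psi_n$ and $\psi_n'$ decay rapidly at infinity) and for $n\ge 1$ the stream function of an $L^2$ vorticity with vanishing angular mean satisfies $\psi_n(r)=O(r)$ as $r\to 0$, but the verification belongs in the proof. Second, in the $n=1$ case the phrase ``distributional dipole'' is a detour: it suffices to observe that the second branch has $\psi_1\sim c/r$ near the origin, so the ODE $\Delta_1\psi_1=-\tfrac{\cG}{2V}\psi_1$ (with $\cG/(2V)\to 1$ at $r=0$) forces $w_1\sim c/r$, which already fails $w_1\in L^2(r\,dr)$ near $r=0$ --- no appeal to distributions is needed, and this also sidesteps the possibility of logarithmic Frobenius corrections, which do not affect the blow-up rate. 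With these small clarifications the argument stands.

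One caveat on signs: your conventions for the stream function and for $\nabla^\perp$ are internally consistent, but note that the paper's definitions of $\omega=\pa_2 v^1-\pa_1 v^2$ and $v=K_{BS}\ast\omega$ are not compatible with each other (a sign typo somewhere), so any careful reader should fix a single convention as you implicitly do.
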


\section{Reduction to one-dimensional operators}

Following Deng's work \cite{DW2}, we reduce the linearized operator to a family of one-dimensional operators.

We conjugate the linearized operator $L-\al \Lambda$ with $\cG^\f12$, and then obtain a linear operator $\cH_\al$ in $L^2(\R^2, d\xi)$:
\ben\label{def:H}
\cH_\al=-\cG^{-\f12}L\cG^\f12+\al \cG^{-\f12}\Lambda \cG^{\f12}=\cL+\al\cM,
\een
where $\cL$ is defined by \eqref{def:cL} and $\cM$ is defined by
\beno
\cM w=v^G\cdot \na w-\f12 \cG^\f12\xi\cdot\big(K_{BS}\ast (\cG^\f12w)\big).
\eeno

Let us introduce some notations:
\begin{align}
&\mathcal{K}_k[h]=\f{1}{2|k|}\int_{0}^{+\infty}\min(\f rs,\f sr)^{|k|}sh(s)ds,\\
&\sigma(r)=\f{1-e^{-r^2/4}}{r^2/4},\quad g(r)=e^{-r^2/8}.
\end{align}
Then for $w=\sum_{k\in \mathbb{Z}^{*}}w_k(r)e^{ik\theta}$, we have
\begin{align*}
\big((\mathcal{H}_{\alpha}-i\lambda)w\big)(r\cos\theta,r\sin\theta)=\sum_{k\in\mathbb{Z}^{*}}(\mathcal{H}_{\alpha,k,\lambda}w_{k})(r)e^{ik\theta},
\end{align*}
where the operator $\mathcal{H}_{\alpha,k,\lambda}$ acts on $L^2(\mathbb{R}_+, rdr)$ and is given by
\begin{align}
&\mathcal{H}_{\alpha,k,\lambda}=-\partial_r^2-\f{1}{r}\partial_r+\f{k^2}{r^2}+\f{r^2}{16}-\f12+i\beta_k(\sigma(r)-\nu_k)-i\beta_kg\mathcal{K}_k[g\cdot],
\end{align}
where
\ben
\beta_k=\f{\alpha k}{8\pi},\quad \lambda=\beta_k\nu_k\in \R.
\een
Without loss of generality, we assume $|\beta_k|\ge 1$ for any $|k|\le 1$.

We introduce the operator
\ben
\widetilde{\mathcal{H}}_{\alpha,k,\lambda}=r^{\f12}\mathcal{H}_{\alpha,k,\lambda} r^{-\f12}:=\widetilde{\mathcal{H}}_{k}.
\een
Then $\widetilde{\mathcal{H}}_k$ acts on $L^2(\mathbb{R}_+, dr)$ and is given by
\begin{align}
&\widetilde{\mathcal{H}}_k=-\partial^2_r+\f{k^2-\f14}{r^2}+\f{r^2}{16}-\f12+i\beta_k(\sigma(r)-\nu_k)-i\beta_k g\widetilde{\mathcal{K}}_k[g\cdot],\\
&\widetilde{\mathcal{K}}_k[h]=\f{1}{2|k|}\int_{0}^{+\infty}\min(\f rs,\f sr)^{|k|}(rs)^{\f12}h(s)ds,
\end{align}
and $C_0^{\infty}(\R_+)$ is a core of the operator $\widetilde{\mathcal{H}}_k$ with domain
\begin{align}
D(\widetilde{\mathcal{H}}_k)=\big\{\omega\in H^2_{loc}(\mathbb{R}_+,dr)\cap L^2(\mathbb{R}_+,dr):\widetilde{\mathcal{H}}_k\omega\in L^2(\mathbb{R}_+,dr)\big\}.
\end{align}
That is,
\begin{align*}
&D=D(\widetilde{\mathcal{H}}_k)=\Big\{w\in L^2(\mathbb{R}_+,dr):\partial_r^2w,\f{w}{r^2},r^2w\in L^2(\mathbb{R}_+,dr) \Big\}\quad |k|\geq2,\\
&D_1=D(\widetilde{\mathcal{H}}_{{k}})=\Big\{w\in L^2(\mathbb{R}_+,dr): r^{\f12}\partial_r^2 ({w}/{r^{\f12}}),r^{\f12}\partial_r({w}/{r^{\f32}}),r^2w\in L^2(\mathbb{R}_+,dr)\Big\}\quad |k|=1.
\end{align*}
Then the resolvent estimate is reduced to the following estimate
\begin{align}
\big\|\widetilde{\mathcal{H}}_ku \big\|_{L^2(\mathbb{R}_+,dr)}\gtrsim |\beta_k|^{\frac{1}{3}}\big\|u \big\|_{L^2(\mathbb{R}_+,dr)}.\nonumber
\end{align}

We also write
\ben
\widetilde{\mathcal{H}}_k=\widetilde{A}_k+i\beta_k\widetilde{B}_k-i\lambda,
\een
where
\begin{align*}
&\widetilde{A}_k=-\partial_r^2+\f{k^2-\f14}{r^2}+\f{r^2}{16}-\f12,\\
&\widetilde{B}_k=\sigma(r)-g\widetilde{\mathcal{K}}_k[g\cdot].
\end{align*}
It is easy to see that
\begin{align}\label{Ker-B}
\text{Ker}(\widetilde{B}_1)={\rm span}\big\{r^{\f32}g(r)\big\},\quad \text{Ker}(\widetilde{B}_k)=\big\{0\big\} \quad \text{for}\,\, |k|\geq2.
\end{align}
Thus, $L-\al\Lambda|_{(\ker \Lambda)^{\bot}}$ is unitary equivalent to $\bigoplus\limits_{|k|=1}\widetilde{\mathcal{H}}_k|_{(\ker \widetilde{B}_1)^{\bot}}\oplus\bigoplus\limits_{|k|\geq 2}\widetilde{\mathcal{H}}_k.$

In the sequel, we denote by $\langle \cdot,\cdot\rangle$ the $L^2(\R_+, dr)$ inner product, and by $\|\cdot\|$ the norm of $L^2(\R_+,dr)$, {$\|\cdot\|_{L^p}$ the norm of $L^p(\R_+,dr)$}.
The notation $a\gtrsim b$ or $a\lesssim b$ means that there exists a constant $C>0$ independent of $\al, k,\lambda$ so that
\beno
a\ge C^{-1}b \quad \text{or}\quad a\le Cb.
\eeno

\section{Resolvent estimate of $\widetilde{\mathcal{H}}_1$}

As $\overline{\widetilde{\mathcal{H}}_{-1}w}=\widetilde{\mathcal{H}}_{1}\overline{w}$, it is enough to prove the following resolvent estimate for $\widetilde{\mathcal{H}}_1$.

\begin{theorem}\label{thm:H1}
For any $\lambda\in\mathbb{R}$ and $w \in\{r^{\f32}g(r)\}^{\perp}\cap D_1$, we have
\begin{align*}
\|\widetilde{\mathcal{H}}_1w\|\gtrsim |\beta_1|^{\f13}\|w\|.
\end{align*}
Moveover, there exist $\lambda\in \R$ and $v \in\{r^{\f32}g(r)\}^{\perp}\cap D_1$ so that
\begin{align*}
\|\widetilde{\mathcal{H}}_1v\|\lesssim |\beta_1|^{\f13}\|v\|.
\end{align*}
\end{theorem}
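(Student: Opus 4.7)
The plan is to analyze the two halves of Theorem 4.1 separately: a pseudospectral lower bound via a wave-operator reduction to a local model, and a quasi-mode construction for sharpness. Throughout we write $\widetilde{\mathcal{H}}_1 = \widetilde{A}_1 + i\beta_1 \widetilde{B}_1 - i\lambda$ with the real part $\widetilde{A}_1$ selfadjoint and nonnegative (after the shift by $-\tfrac12$), and the imaginary part $\widetilde{B}_1 = \sigma - g\widetilde{\mathcal{K}}_1[g\cdot]$ bounded selfadjoint with one-dimensional kernel $\mathrm{span}\{r^{3/2}g(r)\}$ as in \eqref{Ker-B}. The baseline real-part identity $\mathrm{Re}\langle \widetilde{\mathcal{H}}_1 w, w\rangle = \langle \widetilde{A}_1 w, w\rangle$ will yield a priori control of $\|\partial_r w\|$, $\|rw\|$, and $\|w/r\|$ by $\|\widetilde{\mathcal{H}}_1 w\|\cdot\|w\|$, which then feeds into the main argument.

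The key novelty, as flagged in the introduction, is the wave-operator reduction of the nonlocal $\widetilde{B}_1$ to the multiplication operator $M_\sigma$. My plan is to build a bounded invertible intertwiner $W$ acting on $\{r^{3/2}g\}^{\perp}$ such that
\begin{align*}
W \widetilde{B}_1 = M_\sigma W.
\end{align*}
The existence and boundedness of $W$ rests on $g\widetilde{\mathcal{K}}_1[g\cdot]$ being Hilbert--Schmidt and selfadjoint with kernel exactly $\mathrm{span}\{r^{3/2}g\}$ (so that $\widetilde{B}_1$ and $M_\sigma$ have matching essential spectrum and the same range on the orthogonal complement), together with an explicit ODE/integral representation of $W$ derived from the $\min(r/s,s/r)^{|k|}(rs)^{1/2}$ kernel. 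Conjugating by $W$ transforms the resolvent estimate for $\widetilde{\mathcal{H}}_1$ into one for the local model operator $W\widetilde{A}_1 W^{-1} + i\beta_1(\sigma - \nu_1)$, and the commutator $[W,\widetilde{A}_1]$ is controlled by the lower-order bounds from the paragraph above.

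For the local model $\widetilde{A}_1 + i\beta_1(\sigma - \nu_1)$ I would follow a hypocoercive multiplier strategy in the spirit of Gallagher--Gallay--Nier \cite{GGN}. Since $\sigma(r) = (1-e^{-r^2/4})/(r^2/4)$ is strictly decreasing with $\sigma(0)=1$ and $\sigma(\infty)=0$, for $\nu_1\in(0,1)$ the critical level $\sigma^{-1}(\nu_1)$ is a single point $r_c$ at which $\sigma(r)-\nu_1 \sim \sigma'(r_c)(r-r_c)$, with $\sigma'(r_c)\neq 0$. Away from a window of size $|\beta_1|^{-1/3}$ around $r_c$, the imaginary part $\beta_1(\sigma-\nu_1)$ is $\gtrsim |\beta_1|^{2/3}$-coercive pointwise; inside the window, the Airy-type estimate
\begin{align*}
\bigl\|-\partial_r^2 u + i\beta\sigma'(r_c)(r-r_c)u\bigr\| \gtrsim |\beta|^{1/3}\|u\|
\end{align*}
(proved by Fourier-side analysis of the Airy operator) supplies the rate. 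Patching with a smooth partition of unity at scale $|\beta_1|^{-1/3}$ and absorbing commutator errors using the real-part bounds yields the claimed $|\beta_1|^{1/3}$ lower bound; the case $\nu_1\notin(0,1)$ is much easier since $\sigma-\nu_1$ is then uniformly bounded below.

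For the matching upper bound, I would fix $\nu_1\in(0,1)$, set $\lambda = \beta_1\nu_1$, choose $r_c$ with $\sigma(r_c)=\nu_1$, and test against a scaled Airy-type quasi-mode
\begin{align*}
v(r) = \chi(r)\,\mathrm{Ai}\bigl(c_{\beta_1}(r-r_c)\bigr), \qquad c_{\beta_1} = \bigl(-i\beta_1\sigma'(r_c)\bigr)^{1/3},
\end{align*}
with $\chi$ a smooth cutoff concentrated at scale $|\beta_1|^{-1/3}$ around $r_c$, followed by an $L^2$-projection removing the component along $r^{3/2}g$. A direct check gives $\|\widetilde{\mathcal{H}}_1 v\| \lesssim |\beta_1|^{1/3}\|v\|$ by the Airy equation, with the nonlocal tail $g\widetilde{\mathcal{K}}_1[gv]$ of size $\|v\|$ by Cauchy--Schwarz. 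The principal obstacle throughout is the wave-operator step: establishing that $W$ is invertible on $\{r^{3/2}g\}^{\perp}$ with norms independent of $\nu_1$, and that the conjugation error $W\widetilde{A}_1 W^{-1}-\widetilde{A}_1$ is sufficiently mild not to corrupt the $|\beta_1|^{1/3}$ rate. This is the genuinely new analytic content compared to the local Gallagher--Gallay--Nier setting.
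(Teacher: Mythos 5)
Your overall plan is aligned with the paper's philosophy---construct a wave-operator intertwiner to trade the nonlocal $\widetilde{B}_1$ for multiplication by $\sigma$, then run an Airy-scale estimate for the resulting local operator and test a localized quasi-mode for sharpness. But the proposal has two genuine gaps, one structural and one fatal.

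First, the wave operator is only heuristically posited. Matching essential spectrum (or even matching multiplicity functions) does not by itself produce a bounded intertwiner $W$ with $W\widetilde B_1=\sigma W$, let alone one whose conjugation of the second-order operator $\widetilde A_1$ is controllable. The paper's $T$ is constructed explicitly via the Volterra formula
\begin{align*}
Tw(r)=w(r)+\frac{I_1[w](r)\,g(r)}{\sigma'(r)\,r^{3/2}},\qquad I_1[w](r)=\int_0^r s^{3/2}g(s)w(s)\,ds,
\end{align*}
and Lemmas \ref{lem:T-p1}--\ref{lem:com-TA1} then verify by direct computation that $T^*T$ is the orthogonal projection onto $\{r^{3/2}g\}^\perp$, that $TT^*=I$, that $T\widetilde B_1=\sigma T$, and---crucially---that $[T,\widetilde A_1]=f(r)T$ for an \emph{explicit nonnegative} potential $f$. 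Your ``conjugation error $W\widetilde A_1W^{-1}-\widetilde A_1$ is sufficiently mild'' is much weaker than this exact identity; what makes the reduction work is precisely that the conjugated operator is $\widetilde A_1+f(r)+i\beta_1(\sigma-\nu_1)$, where the commutator is a multiplication by a nonnegative function that \emph{improves} the coercivity ($\widetilde A_1+f\gtrsim r^{-2}+r^2$, Lemma \ref{lem:coer-A1+f}), rather than some abstract lower-order perturbation. Without exhibiting this structure the estimate does not close.

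Second, your sharpness argument is broken. You test $\widetilde{\mathcal H}_1$ directly against an Airy-type quasi-mode $v$ projected orthogonally onto $\{r^{3/2}g\}^\perp$, and assert that the nonlocal piece $g\widetilde{\mathcal K}_1[gv]$ is ``of size $\|v\|$.'' That is true, but the term appearing in $\widetilde{\mathcal H}_1 v$ is $-i\beta_1\,g\widetilde{\mathcal K}_1[gv]$, whose norm is therefore $O(|\beta_1|\,\|v\|)$---not $O(|\beta_1|^{1/3}\|v\|)$. The orthogonal projection onto $\{r^{3/2}g\}^\perp$ does not suppress this: it only removes a rank-one component annihilated by $\widetilde B_1$, leaving the full nonlocal contribution on $v_0$ essentially intact, and even accounting for the small $L^1$ mass of the localized bump you only gain $|\beta_1|^{-1/6}$, giving $|\beta_1|^{5/6}\|v\|$---still far too large. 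The paper avoids this by constructing the quasi-mode \emph{in the conjugated picture}: take a bump $u$ localized at scale $1/r_1$ around $r_1$ (with $|\beta_1|=r_1^6$), show $\|\mathcal L_1 u\|\lesssim r_1^2\|u\|=|\beta_1|^{1/3}\|u\|$ for the purely local operator $\mathcal L_1=\widetilde A_1+f+i\beta_1(\sigma-\nu_1)$, and then set $v=T^*u$ so that $\|\widetilde{\mathcal H}_1 v\|=\|T\widetilde{\mathcal H}_1 T^*u\|=\|\mathcal L_1 u\|$. You need this conjugation before testing; a posteriori orthogonal projection does not work.

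Two smaller points worth flagging: the Airy window must scale like $\bigl(|\beta_1|\,|\sigma'(r_1)|\bigr)^{-1/3}$ rather than $|\beta_1|^{-1/3}$, since $\sigma'$ degenerates as $r_1\to0$ or $r_1\to\infty$ (the paper chooses $\delta^3|\beta_1|\min(r_1,r_1^{-3})=1$); and the paper's lower-bound argument in the critical case is a direct integration-by-parts computation against $i\,\mathrm{sgn}(\beta_1)(\chi_{(0,r_-)}-\chi_{(r_+,\infty)})u$ combined with the interpolation $\|u\|_{L^\infty}\le\|u\|^{1/2}\|u'\|^{1/2}$, which is more elementary than the partition-of-unity-plus-Airy-operator route you sketch, though both lead to the same rate.
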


\subsection{Reduction to the model operator $\cL_1$}

Let us introduce the operator $T$ defined by
\begin{align}
Tw(r)=w(r)+\f{I_1[w](r)g(r)}{\sigma'(r)r^{\f32}},
\end{align}
where
\begin{align}
I_1[w](r)=\int_{0}^{r}s^{\f32}g(s)w(s)ds.
\end{align}
It is easy to check that $T$ is a bounded linear operator in $L^2(\mathbb{R}_+, dr)$. The adjoint operator $T^*$ is also a bounded linear operator in $L^2(\mathbb{R}_+, dr)$  given by
\begin{align}
T^*\omega(r)=\omega(r)+r^{\f32}g(r)\int_{r}^{+\infty}\f{\omega(s)g(s)}{s^{\f32}\sigma'(s)}ds.
\end{align}

\begin{lemma}\label{lem:T-p1}
It holds that
\begin{itemize}

\item[1.]  $\|T w\|^2=\|w\|^2-\f{\langle w,r^\f32g\rangle^2}{\|r^\f32g\|^2}$;

\item[2.] $T^*T=P$, where $P$ is the projection to $\{r^{\f32}g(r)\}^{\perp}\cap L^2(\mathbb{R}_+,dr)$;

\item[3.] $TT^*=I_{L^2(\mathbb{R}_+,dr)}$.

\end{itemize}
\end{lemma}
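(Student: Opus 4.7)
Write $\phi(r) = r^{3/2} g(r)$ and $K(r) = g(r)/(\sigma'(r) r^{3/2})$, so that $Tw = w + K \cdot I_1[w]$ and $T^*u(r) = u(r) + \phi(r) \int_r^{+\infty} K(t) u(t)\, dt$. A substitution $u = r^2/4$ gives $\|\phi\|^2 = \int_0^{+\infty} r^3 e^{-r^2/4}\, dr = 8$. The single algebraic ingredient driving the whole proof is the ODE identity
\begin{align*}
\sigma''(r) + \frac{3 \sigma'(r)}{r} + g(r)^2 = 0 \quad \Longleftrightarrow \quad (r^3 \sigma'(r))' = -\phi(r)^2,
\end{align*}
which follows by differentiating $r^2 \sigma(r) = 4(1-e^{-r^2/4})$ twice. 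Note also the compatibility relation $\sigma'(r) r^3 K(r) = \phi(r)$, which is what couples $T$ and $T^*$.

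For property (2), I would compose the formulas for $T^*$ and $T$, apply Fubini to the resulting double integral, and collect terms to obtain $T^*Tw(r) = w(r) + \phi(r) H(r)$ with
\begin{align*}
H(r) = \frac{I_1[w](r)}{\sigma'(r) r^3} + \int_r^{+\infty} K(t) w(t)\, dt + \int_r^{+\infty} K(t)^2 I_1[w](t)\, dt.
\end{align*}
Termwise differentiation gives $H'(r) \equiv 0$: the $w(r)$-contributions cancel by the compatibility relation $\phi/(\sigma' r^3) = K$, and the $I_1[w](r)$-contributions collapse to $-I_1[w](r)[\sigma''+3\sigma'/r+g^2]/((\sigma')^2 r^3) = 0$ by the ODE identity. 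Evaluating at $r\to+\infty$, where $I_1[w](r) \to \langle w, \phi\rangle$, $\sigma'(r) r^3 \to -8 = -\|\phi\|^2$, and both improper integrals tend to zero, gives $H \equiv -\langle w,\phi\rangle/\|\phi\|^2$, which is exactly the projection formula $T^*Tw = Pw$. Property (1) then follows immediately from
\begin{align*}
\|Tw\|^2 = \langle T^*Tw, w\rangle = \langle Pw, w\rangle = \|w\|^2 - \frac{\langle w, \phi\rangle^2}{\|\phi\|^2}.
\end{align*}

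For property (3) I would mirror the calculation: composing the other way, Fubini yields $TT^*u(r) = u(r) + K(r) G(r)$ with
\begin{align*}
G(r) = \sigma'(r) r^3 J_u(r) + I_1[u](r) + \int_0^r \phi(s)^2 J_u(s)\, ds, \qquad J_u(r) = \int_r^{+\infty} K(t) u(t)\, dt.
\end{align*}
Differentiating $G$ again gives $G'(r) \equiv 0$: the $u(r)$-contributions cancel by $\sigma'(r) r^3 K(r) = \phi(r)$, and the $J_u(r)$-contributions combine into $[(r^3\sigma')' + \phi^2] J_u(r) = 0$ by the ODE identity in its second form. Evaluating this time at $r \to 0^+$, where $\sigma'(r) r^3 \to 0$ (since $\sigma'(0) = 0$), $I_1[u](0) = 0$, and the Volterra integral vanishes, yields $G \equiv 0$, so $TT^*u = u$.

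The main obstacle is essentially bookkeeping: justifying the Fubini swap on the double integrals under the $L^2$ hypothesis, keeping signs straight (note that $\sigma'(r) < 0$ throughout, so $K(r) < 0$), and verifying that the vanishing of the remainder is read off at $r = +\infty$ for (2) but at $r = 0^+$ for (3). Once the ODE identity $\sigma''+3\sigma'/r + g^2 = 0$ is in hand, every cancellation is immediate, and the three claims reduce to showing that a single scalar function is constant along the ray $r > 0$.
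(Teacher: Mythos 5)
Your proof is correct but takes a genuinely different route from the paper's. Both pivot on the same identity $(r^3\sigma'(r))'=-r^3g(r)^2=-\phi(r)^2$, deployed differently. The paper first checks $T\phi=0$, then for $T^*T=P$ restricts to $w\perp\phi$ and verifies $\langle T^*Tw,u\rangle=\langle w,u\rangle$ against test functions $u\in C_0^\infty(\R_+)$ by a single integration by parts; for $TT^*=I$ it shows $|T^*w|^2-|w|^2=-\partial_r\big(r^3\sigma'(r)|f_1|^2\big)$ is a total derivative and integrates it over $(0,\infty)$. You instead compose the operators and show in both cases that the remainder is $\phi\cdot H$ (resp.\ $K\cdot G$) with $H'\equiv 0$ (resp.\ $G'\equiv 0$) by the ODE identity together with the compatibility $\sigma'(r)r^3K(r)=\phi(r)$, then read the constant off at the endpoint: $H(+\infty)=-\langle w,\phi\rangle/\|\phi\|^2$ using $\lim_{r\to\infty}\sigma'(r)r^3=-8=-\|\phi\|^2$, and $G(0^+)=0$. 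What this buys you: a uniform mechanism for both directions, the full projection formula at once without the preliminary reduction to $w\perp\phi$, and a transparent reason why the normalization $\|\phi\|^2=8$ enters. What the paper's route buys: for $TT^*=I$ it is leaner (no operator composition, no Fubini, just one clean total derivative), and for $T^*T=P$ the weak formulation against $C_0^\infty$ sidesteps having to control $J_u$ near $r=0$. The one technical debt you flag --- the Fubini swap and the endpoint limits, e.g.\ $r^3\sigma'(r)J_u(r)\to 0$ as $r\to 0^+$ even though $K(t)\sim -4/t^{5/2}$ gives only $J_u(r)=O(r^{-2})$ by Cauchy--Schwarz while $r^3\sigma'(r)=O(r^4)$ --- is real but easily discharged by first proving the pointwise identities on $C_0^\infty(\R_+)$ and extending by boundedness of $T$ and $T^*$; the paper's argument implicitly relies on the same density reduction.
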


\begin{proof}
The first one is equivalent to the second one. Thanks to
\begin{align*}
-(r^3\sigma'(r))'=r^3g(r)^2\quad\text{and}\quad I_1[s^{\f32}g(s)](r)=\int_{0}^{r}s^3g(s)^2ds=-r^3\sigma'(r),
\end{align*}
we find that
\beno
T(r^{\f32}g)(r)=r^{\f32}g(r)+\f{I_1[s^{\f32}g(s)](r)g(r)}{\sigma'(r)r^{\f32}}=0.
\eeno
Thus, it suffices to check that for any $w \in\{r^{\f32}g(r)\}^{\perp}\cap L^2(\mathbb{R}_+, dr),\, u\in {C_0^{\infty}(\mathbb{R}_+)}$,
\begin{align*}
\langle T^*Tw,u \rangle=\langle w,u\rangle,
\end{align*}
which is equivalent to {verifying} that
\begin{align}
\Big\langle \f{I_1[w](r)g(r)}{\sigma'(r)r^{\f32}},u\Big\rangle+\Big\langle w, \f{I_1[u](r)g(r)}{\sigma'(r)r^{\f32}}\Big\rangle
+\Big\langle \f{I_1[w](r)g(r)}{\sigma'(r)r^{\f32}},\f{I_1[u](r)g(r)}{\sigma'(r)r^{\f32}}\Big\rangle=0.\nonumber
\end{align}
Using the facts that for $w\in \big\{r^{\f32}g(r)\big\}^{\perp}$,
\begin{align*}
I_1[\omega](0)=\lim\limits_{r\to+\infty}I_1[\omega](r)=0,
\end{align*}
and $(r^3\sigma'(r))'=-r^3g(r)^2$, we get by integration by parts that
\begin{align*}
\Big\langle \f{I_1[w](r)g(r)}{\sigma'(r)r^{\f32}},{u} \Big\rangle=&\int_{0}^{+\infty}\f{I_1[w]}{\sigma'(r)r^3}dI_1[\overline{u}]\\
=&-\int_{0}^{+\infty}I_1[\overline{u}](\f{I_1[w]}{\sigma'(r)r^3})'dr\\
=&-\int_{0}^{+\infty}I_1[\overline{u}]\f{r^{\f32}g(r)w(r)\sigma'(r)r^3-I_1[\omega](\sigma'(r)r^3)'}{(\sigma'(r)r^3)^2}{dr}\\
=&-\int_{0}^{+\infty}w(r)\f{I_1[\overline{u}]g(r)}{\sigma'(r)r^{\f32}}dr-\int_{0}^{+\infty}I_1[w]I_1[\overline{u}]\f{g^2(r)}{(\sigma'(r)r^{\f32})^2}dr.
\end{align*}
This shows that $T^*T=P$.

On the other hand, we have
\begin{align*}
|T^*w|^2-|w|^2=-\partial_r(r^3\sigma'(r)|f_1|^2),\quad f_1(r)=\int_r^{+\infty}\f{w(s)g(s)}{s^{\f32}\sigma'(s)}ds,
\end{align*}
which gives $\|T^*w\|^2=\|w\|^2$, thus $TT^*=I$.
\end{proof}

We have the following  important relationship between $T$ and $\widetilde{B}_1$.
\begin{lemma}\label{lem:TB-relation}
It holds that
\begin{align}
T\widetilde{B}_1=\sigma(r)T.\nonumber
\end{align}
\end{lemma}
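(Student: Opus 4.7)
I would verify the identity directly on the dense subset $C_0^\infty(\R_+)$ and extend by continuity, using that $T$ is bounded on $L^2(\R_+,dr)$ and that $\sigma \in L^\infty(\R_+)$. Writing $\Phi(r)=g(r)/(\sigma'(r)r^{\f32})$, so that $Tw=w+\Phi\, I_1[w]$, and using the decomposition $\widetilde{B}_1 w=\sigma(r) w-g(r)\widetilde{\mathcal{K}}_1[gw](r)$, the $\sigma w$ contributions cancel between $T\widetilde{B}_1 w$ and $\sigma T w$, and after dividing through by $\Phi(r)$ the identity reduces to the scalar pointwise relation
\begin{equation*}
\int_0^r s^{\f32} g(s)\bigl(\sigma(s)-\sigma(r)\bigr) w(s)\, ds
= r^{\f32}\sigma'(r)\widetilde{\mathcal{K}}_1[gw](r) + \int_0^r s^{\f32} g(s)^2 \widetilde{\mathcal{K}}_1[gw](s)\, ds.
\end{equation*}

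\smallskip

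\noindent I would then prove this scalar identity by showing that both sides have the same $r$-derivative and both vanish at $r=0$. The left-hand derivative is simply $-\sigma'(r) I_1[w](r)$. For the right-hand side, using the explicit splitting
\begin{equation*}
\widetilde{\mathcal{K}}_1[gw](r) = \f{I_1[w](r)}{2r^{\f12}} + \f{r^{\f32}}{2}\int_r^{+\infty}\f{g(s)w(s)}{s^{\f12}}\, ds
\end{equation*}
coming from the $\min(r/s,s/r)$ kernel, together with the structural identity $(r^3\sigma'(r))'=-r^3 g(r)^2$ (equivalently $r\sigma''(r)+3\sigma'(r)=-rg(r)^2$) already exploited in the proof of Lemma~\ref{lem:T-p1}, the $r^{\f32}\sigma''(r)\widetilde{\mathcal{K}}_1[gw](r)$ term from differentiating the product absorbs exactly the $r^{\f32}g(r)^2\widetilde{\mathcal{K}}_1[gw](r)$ contribution and a multiple of $r^{\f12}\sigma'(r)\widetilde{\mathcal{K}}_1[gw](r)$, leaving $\sigma'(r)\bigl(r^{\f32} F'(r)-\tfrac{3}{2} r^{\f12} F(r)\bigr)$ with $F=\widetilde{\mathcal{K}}_1[gw]$. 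A short computation using the splitting of $F$ shows this equals $-\sigma'(r) I_1[w](r)$, matching the left-hand derivative. Both sides of the scalar identity vanish at $r=0$ because $\sigma'(0)=0$ and the integrals are empty, so the identity holds for all $r>0$.

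\smallskip

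\noindent The main obstacle, and the reason the correction term in the definition of $T$ takes exactly the form $g(r)/(\sigma'(r)r^{\f32})$, is the bookkeeping in this final cancellation: only with this precise coefficient does the contribution of $r^{\f32}\sigma''(r)$ combine with $r^{\f32}g(r)^2\widetilde{\mathcal{K}}_1[gw](r)$ via $(r^3\sigma')'=-r^3 g^2$ to leave behind exactly $-\sigma'(r) I_1[w](r)$. This is the wave-operator intertwining feature anticipated in the introduction: $T$ conjugates the multiplicative-plus-integral operator $\widetilde{B}_1$ to pure multiplication by $\sigma$, which is what will make the resolvent analysis of $\widetilde{\mathcal{H}}_1$ tractable in the sequel.
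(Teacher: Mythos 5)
Your proposal is correct and reaches the same scalar identity \eqref{eq:I1-1} as the paper, but establishes it by a genuinely different route: you differentiate both sides in $r$ and match derivatives, whereas the paper verifies the identity directly by expanding $\sigma'(r)r^{\f32}\widetilde{\mathcal{K}}_1[gw]$ from the definition of $\sigma'$ and the kernel, and by a Fubini-type interchange of integrals in $I_1[g\widetilde{\mathcal{K}}_1[gw]]$ followed by explicit antiderivatives of $s^3e^{-s^2/4}$. Your computation checks out: the left derivative is $-\sigma'(r)I_1[w](r)$, and on the right, the product rule applied to $r^{\f32}\sigma'(r)F(r)$ with $F=\widetilde{\mathcal{K}}_1[gw]$ together with $(r^3\sigma')'=-r^3g^2$ absorbs the $r^{\f32}g^2F$ term from the $I_1$ integral and leaves $\sigma'(r)\big(r^{\f32}F'(r)-\f{3}{2}r^{\f12}F(r)\big)$, which the two-piece splitting of $F$ collapses to $-\sigma'(r)I_1[w](r)$. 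Your argument is shorter, avoids the double integral, and exposes the structural identity $(r^3\sigma')'=-r^3 g^2$ that the paper buries inside an explicit antiderivative computation; the paper's version has the mild advantage of producing the finite-$r$ expression directly rather than passing through an ODE-style uniqueness step. One tiny slip in your justification at $r=0$: the term $r^{\f32}\sigma'(r)\widetilde{\mathcal{K}}_1[gw](r)$ is not itself an empty integral, but it does vanish there since $\sigma'(0)=0$ and $\widetilde{\mathcal{K}}_1[gw](r)=O(r)$ near the origin, so your conclusion stands.
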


\begin{proof}
Direct calculation gives
\begin{align*}
T\widetilde{B}_1w&=\widetilde{B}_1w+\f{I_1[\widetilde{B}_1w]g(r)}{\sigma'(r)r^{\f32}}\\
&=\sigma(r)w-g\widetilde{\mathcal{K}}_1[gw]+\f{I_1[\sigma w]g(r)}{\sigma'(r)r^{\f32}}-\f{I_1[g\widetilde{\mathcal{K}}_1[gw]]g(r)}{\sigma'(r)r^{\f32}}.
\end{align*}
Thus, it suffices to show that
\begin{align}
I_1[\sigma w]=\sigma'(r)r^{\f32}\widetilde{\mathcal{K}}_1[g w]+I_1[g\widetilde{\mathcal{K}}_1[gw]]+\sigma(r)I_1[w].\label{eq:I1-1}
\end{align}
Direct calculation shows that
\begin{align*}
\sigma'(r)r^{\f32}\widetilde{\mathcal{K}}_1[gw]&=r^{\f12}(e^{-\f{r^2}{4}}-\sigma(r))\int_{0}^{+\infty}\min(\f{r}{s},\f{s}{r})(rs)^{\f12}g(s)w(s)ds\\
&=r^{\f12}(e^{-\f{r^2}{4}}-\sigma(r))[\int_{0}^{r}r^{-\f12}s^{\f32}g(s)w(s)ds+\int_{r}^{+\infty}r^{\f32}s^{-\f12}g(s)w(s)ds]\\
&=(e^{-\f{r^2}{4}}-\sigma(r))I_1[w]+r^2(e^{-\f{r^2}{4}}-\sigma(r))\int_{r}^{+\infty}s^{-\f12}g(s)w(s)ds,
\end{align*}
and
\begin{align*}
&I_1[g\widetilde{\mathcal{K}}_1[gw]](r)=\int_{0}^{r}s^{\f32}g^2(s)\widetilde{\mathcal{K}}_1[gw](s)ds\\
&=\int_{0}^{r}s^{\f32}g^2(s)ds\f{1}{2}\int_{0}^{+\infty}\min(\f{t}{s},\f{s}{t})(ts)^{\f12}g(t)w(t)dt\\
&=\f12\int_{0}^{r}se^{-\f{s^2}{4}}ds\int_{0}^{s}t^{\f32}g(t)w(t)dt+\f{1}{2}\int_{0}^{r}s^3e^{-\f{s^2}{4}}ds\int_{s}^{+\infty}t^{-\f12}g(t)w(t)dt\\
&=\f12\int_{0}^{r}t^{\f32}g(t)w(t)dt\int_{t}^{r}se^{-\f{s^2}{4}}ds+\f12\int_{0}^{r}s^3e^{-\f{s^2}{4}}ds\int_{s}^{r}t^{-\f12}g(t)w(t)dt\\
&\quad+\f12\int_{0}^{r}s^3e^{-\f{s^2}{4}}ds\int_{r}^{+\infty}t^{-\f12}g(t)w(t)dt\\
&=\int_{0}^{r}t^{\f32}g(t)w(t)(e^{-\f{t^2}{4}}-e^{-\f{r^2}{4}})dt+\f12\int_{0}^{r}t^{-\f12}g(t)w(t)dt\int_{0}^{t}s^3e^{-\f{s^2}{4}}ds\\
&\quad+[4(1-e^{-\f{r^2}{4}})-r^2e^{-\f{r^2}{4}}]\int_{r}^{+\infty}t^{-\f12}g(t)w(t)dt\\
&=-e^{-\f{r^2}{4}}I_1[w]+I_1[\sigma w]+r^2(\sigma(r)-e^{-\f{r^2}{4}})\int_{r}^{+\infty}t^{-\f12}g(t)w(t)dt,
\end{align*}
which give \eqref{eq:I1-1}.
\end{proof}

\begin{lemma}\label{lem:com-TA1}
It holds that
\begin{align}
[T,\widetilde{A}_{1}]w=T\widetilde{A}_1w-\widetilde{A}_1Tw=f(r)Tw,\nonumber
\end{align}
where
\beno
f(r)=2\f{g(r)^4}{(\sigma'(r)^2)}+\f{g(r)^2}{\sigma'(r)}\big(\f{6}{r}-r\big)\geq 0.
\eeno
\end{lemma}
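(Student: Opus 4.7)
The plan is to write $T = I + \Phi$ with $\Phi w = \phi(r) I_1[w]$ and $\phi(r) = g(r)/(\sigma'(r) r^{\f32})$, so that $[T,\widetilde{A}_1] = [\Phi,\widetilde{A}_1]$. The task thus reduces to computing
\[
[\Phi,\widetilde{A}_1]w \;=\; \phi\, I_1[\widetilde{A}_1 w] \;-\; \widetilde{A}_1\bigl(\phi\, I_1[w]\bigr)
\]
for $w$ in the dense core $C_0^{\infty}(\R_+)$. The two pieces should split cleanly into a multiplicative term in $w$ plus a term in $I_1[w]$; matching against $f(r) T w = f(r) w + f(r)\phi\, I_1[w]$ will then force two pointwise identities that $f$ must satisfy.

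The key input for the first piece is the eigenfunction identity
\[
\widetilde{A}_1 \psi = \f12 \psi, \qquad \psi(r):= r^{\f32} g(r),
\]
a one-line check from the definitions, conceptually natural since $\psi$ corresponds after the two conjugations to $\partial_1 \cG \in \ker(L+\f12)$. Using $\psi(0)=\psi'(0)=0$ and the compact support of $w$, two integrations by parts in $I_1[-w''] = -\int_{0}^{r}\psi\, w''\, ds$, together with $V\psi - \psi''=\f12 \psi$ for $V(r) = \f{3}{4r^2}+\f{r^2}{16}-\f12$, give
\[
I_1[\widetilde{A}_1 w] = -\psi w' + \psi' w + \f12 I_1[w].
\]
On the other hand, Leibniz expansion of $\partial_r^2(\phi I_1[w])$ together with $(I_1[w])' = \psi w$ yields
\[
\widetilde{A}_1\bigl(\phi\, I_1[w]\bigr) = (V\phi-\phi'')\, I_1[w] - 2\phi'\psi w - \phi\psi' w - \phi\psi w'.
\]
Subtracting, the two $\phi\psi w'$ contributions cancel and the commutator collapses to
\[
[T,\widetilde{A}_1] w = 2(\phi\psi)'\, w + \bigl(-\widetilde{A}_1 \phi + \f12 \phi\bigr)\, I_1[w],
\]
which is of the required shape $f T w$ precisely when $2(\phi\psi)'=f$ and $-\widetilde{A}_1\phi + \f12\phi = f\phi$ hold pointwise.

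The first identity is immediate from $\phi\psi=g^2/\sigma'$ together with $g'=-\f{r}{4}g$ and the readily derived relations $r\sigma' = 2(g^2-\sigma)$ and $\sigma'' = -g^2 - 3\sigma'/r$. The second is an eigenvalue-type identity for $\phi$ itself, and is the main obstacle. I would handle it by exploiting $\phi = q\psi$ with $q = 1/(\sigma' r^3)$, which gives $\phi' = \phi\cdot a$ for
\[
a(r) = \f{3}{2r} - \f{r}{4} + \f{g^2}{\sigma'},
\]
and hence $\phi'' = \phi(a^2+a')$. Expanding $a^2+a'$ with the same identities for $\sigma',\sigma''$, the $r^{-2}$, $r^2$ and constant contributions reassemble exactly into $V - \f12$, while the remaining $g$-dependent terms collapse to $f$; thus $a^2 + a' = V - \f12 + f$, whence $\widetilde{A}_1\phi = -\phi''+V\phi = (\f12 - f)\phi$. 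Finally, for the nonnegativity $f \geq 0$, factoring out $2g^2/(\sigma')^2 > 0$ reduces the question to $6g^2 + (r^2-6)\sigma \geq 0$, which vanishes at $r=0$ with leading behaviour $r^2/4$, tends to $4$ as $r\to\infty$, and is monotone increasing in $r$ by a short one-variable check using $\sigma' < 0$.
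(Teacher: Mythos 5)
Your computation is correct and tracks the paper's proof almost step for step: the paper also splits $T=I+\Phi$ implicitly, uses the eigenfunction identity $(\widetilde{A}_1+\tfrac12)(r^{3/2}g)=r^{3/2}g$ (encoded there as $F''=(\tfrac34 r^{-2}-1+\tfrac{r^2}{16})F$ for $F=r^{3/2}g$), and reassembles the commutator as a multiple of $Tw$; your identities $2(\phi\psi)'=f$ and $\widetilde{A}_1\phi=(\tfrac12-f)\phi$ are exactly the paper's relations $\tfrac{F'F}{G}+(\tfrac{F^2}{G})'+F(\tfrac FG)'=4\tfrac{F'F}{G}+2\tfrac{(G')^2}{G^2}$ and $(\tfrac FG)''=(\tfrac34 r^{-2}-1+\tfrac{r^2}{16})\tfrac FG+\tfrac FG(4\tfrac{F'F}{G}+2\tfrac{(G')^2}{G^2})$ written in different variables. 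The one genuine departure is the nonnegativity of $f$: you factor $f=\tfrac{2g^2}{(\sigma')^2r^2}\bigl(6g^2+(r^2-6)\sigma\bigr)$ and argue monotonicity in $r$, whereas the paper clears denominators to obtain $\tfrac{r^6+r^2(6-r^2)(r^2+4-4e^{r^2/4})}{32(r^2/4+1-e^{r^2/4})^2}$ and verifies the numerator is a nonnegative power series in $r^2/4$. Both work; your route requires actually carrying out the derivative computation you describe only as a ``short one-variable check'' (e.g.\ $h'(r)=\tfrac{12}{r}(\sigma-g^2)-rg^2\ge 0$, which reduces to $e^u-1-u\ge u^2/3$), so make sure to include it, while the paper's Taylor expansion is arguably more self-contained since the same power-series manipulation is re-used in Lemma \ref{lem:coer-A1+f}.
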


\begin{proof}
First of all, we have
\begin{align*}
[T,\widetilde{A}_{1}]w=\f{I_1[(-\partial_r^2+\f34\f{1}{r^2}+\f{r^2}{16})w]g(r)}{\sigma'(r)r^\f{3}{2}}
-\Big(-\partial_r^2+\f34\f{1}{r^2}+\f{r^2}{16}\Big)\Big(\f{I_1[w]g(r)}{\sigma'(r)r^\f32}\Big).
\end{align*}
Using the facts that
\begin{align*}
&I_1[-\partial^2_rw]=-r^{\f32}g(r)w'(r)+(r^\f32g(r))'w(r)-\int_{0}^{r}(s^\f32g(s))''w(s)ds,\\
&(-\partial_r^2+\f34\f{1}{r^2}+\f{r^2}{16})r^\f32g(r)=r^\f32g(r),
\end{align*}
we deduce that
\begin{align*}
I_1\big[(-\partial_r^2+\f34\f{1}{r^2}+\f{r^2}{16})w\big]=-r^\f32g(r)\omega'(r)+(r^\f32g(r))'\omega(r)+I_1[\omega],
\end{align*}
Direct calculation gives
\begin{align*} \partial^2_r\Big(\f{I_1[\omega]g(r)}{\sigma'(r)r^\f32}\Big)=\omega'(r)\f{g^2(r)}{\sigma'(r)}+\omega(r)\big(\f{g^2(r)}{\sigma'(r)}\big)'
+r^\f32g(r)\omega(r)\big(\f{g(r)r^\f32}{\sigma'(r)r^3}\big)'+I_1[\omega]\big(\f{g(r)r^\f32}{\sigma'(r)r^3}\big)''.
\end{align*}
Let $F=r^\f32g(r)$ and ${G(r)}=\sigma'(r)r^3$. We have
\begin{align*}
F'=\big(\f32\cdot\f{1}{r}-\f{r}{4}\big)F,\quad F''=\big(\f34\cdot\f{1}{r^2}-1+\f{r^2}{16}\big)F,\quad G'=-F^2.
\end{align*}

Summing up, we obtain
\begin{align*}
[T,\widetilde{A}_{1}]w=&\big[\f{F'F}{G}+(\f{F^2}{G})'+F(\f{F}{G})'\big]w\\
&+\big(1-\f34\cdot\f{1}{r^2}-\f{r^2}{16}\big)\f{I_1[w]F}{G}+I_1[w](\f{F}{G})''.
\end{align*}
On the other hand, we have
\begin{align*}
&\big(\f{F}{G}\big)''=\big(\f{3}{4}\cdot\f{1}{r^2}-1+\f{r^2}{16}\big)\f{F}{G}+\f{F}{G}\big(4\f{F'F}{G}+2\f{(G')^2}{G^2}\big),\\
&\f{F'F}{G}+(\f{F^2}{G})'+F\big(\f{F}{G}\big)'=4\f{F'F}{G}+2\f{(G')^2}{G^2},
\end{align*}
Then we infer that
\begin{align*}
[T,\widetilde{A}_{1}]w&=\big(4\f{F'F}{G}+2\f{(G')^2}{G^2}\big)(w+\f{I_1[w]F}{G})\\
&=\big(2\f{g^4}{(\sigma')^2}+\f{g^2}{\sigma'(r)}(\f{6}{r}-r)\big)Tw=f(r)Tw.
\end{align*}

It remains to prove that $f(r)\geq0$. We have
\begin{align*}
2\f{g^4}{(\sigma')^2}+\f{g^2}{\sigma'(r)}(\f{6}{r}-r)=\f{r^6+r^2(6-r^2)(r^2+4-4e^{\f{r^2}{4}})}{32(\f{r^2}{4}+1-e^{\f{r^2}{4}})^2},
\end{align*}
while by Taylor expansion, we have
\begin{align*}
r^6+r^2(6-r^2)(r^2+4-4e^{\f{r^2}{4}})=&2r^4+24r^2-24r^2e^{\f{r^2}{4}}+4r^4e^{\f{r^2}{4}}\\
=&2r^2\Big[r^2+2r^2\sum_{n=0}^{+\infty}\f{1}{n!}(\f{r^2}{4})^n-12\sum_{n=1}^{+\infty}\f{1}{n!}(\f{r^2}{4})^n\Big]\\
=&2r^2\Big[8\sum_{n=2}^{+\infty}\f{1}{(n-1)!}(\f{r^2}{4})^n-12\sum_{n=2}^{+\infty}\f{1}{n!}(\f{r^2}{4})^n\Big]\geq0.
\end{align*}
This completes the proof.
\end{proof}

It follows from Lemma \ref{lem:TB-relation} and Lemma \ref{lem:T-p1} that for $w\in\{r^{\f32}g(r)\}^{\perp}\cap D_1$,
\begin{align*}
T\widetilde{\mathcal{H}}_1w&=T\widetilde{A}_1w+i\beta_1T\widetilde{B}_1w-i\lambda Tw\\
&=T\widetilde{A}_1T^*Tw+i\beta_1\sigma(r)Tw-i\lambda Tw.
\end{align*}
Lemma \ref{lem:T-p1} ensures that $T:\big\{r^{\f32}g(r)\big\}^{\perp}\to L^2(\R_+,dr)$ is invertible and $T^{-1}=T^*$.
Let $w=T^{-1}u$. We infer from Lemma \ref{lem:com-TA1} that
\begin{align}
T\widetilde{\mathcal{H}}_1T^{-1}u=&T\widetilde{A}_1T^{-1}u+i\beta_1\sigma(r)u-i\lambda u\nonumber\\
=&\widetilde{A}_1u+f(r)u+i\beta_1\sigma(r)u-i\lambda u={\mathcal{L}}_1u,\label{eq:H1toL1}
\end{align}
where
\ben\label{def:f}
f(r)=2\f{g(r)^4}{(\sigma(r)')^2}+\f{g(r)^2}{\sigma(r)'}\big(\f{6}{r}-r\big).
\een
So, the operator $T$ plays a role of wave operator. Let \begin{align*}
D({\mathcal{L}}_1)=\big\{\omega\in H^2_{loc}(\mathbb{R}_+,dr)\cap L^2(\mathbb{R}_+,dr):{\mathcal{L}}_1\omega\in L^2(\mathbb{R}_+,dr)\big\}.
\end{align*}
Then $ u\in D({\mathcal{L}}_1)\Leftrightarrow T^{*}u\in D({\mathcal{H}}_1)\cap\big\{r^{\f32}g(r)\big\}^{\perp},$ and $D({\mathcal{L}}_1)=D(\widetilde{\mathcal{H}}_3)=D. $

Moreover, we have
\beno
\langle \widetilde{\cH}_1w, w\rangle=\langle \widetilde{\cH}_1T^{-1}u, T^*u\rangle=\langle \cL_1u, u\rangle.
\eeno
On the other hand, $\|w\|=\|Tw\|=\|u\|$ for any  $w\in\{r^{\f32}g(r)\}^{\perp}\cap D_1$.
Thus, we reduce the resolvent estimate of $\widetilde{\cH}_1$ to one of the model operator $\cL_1$.

\subsection{Coercive estimates}

\begin{lemma}\label{lem:coer-A1}
The operator $\widetilde{A}_1$ can be represented as
\begin{align}
\big(\widetilde{A}_1-\f12\big)w=-r^{-\f32}g^{-1}\partial_r\big[r^3g^2\partial_r(r^{-\f32}g^{-1}w)\big].
\end{align}
In particular, we have
\begin{align}
\widetilde{A}_k\geq\f12\quad\text{for}\quad k\ge 1.
\end{align}
\end{lemma}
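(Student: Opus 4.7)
The identity is the standard supersymmetric (Darboux) factorization of a one-dimensional Schrödinger operator around its ground state. First I observe that the function $\phi_0(r)=r^{\f32}g(r)$ is already singled out in the proof of Lemma~\ref{lem:com-TA1}, where the computation
$(-\partial_r^2+\f{3}{4r^2}+\f{r^2}{16})\phi_0=\phi_0$
was established. Rewriting this as $\widetilde{A}_1\phi_0=\f12\phi_0$ identifies $\f12$ as the eigenvalue of $\widetilde{A}_1$ attached to the positive function $\phi_0$, which is exactly what drives the factorization.

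For the representation, I would just unfold the right-hand side. Setting $u=\phi_0^{-1}w=r^{-\f32}g^{-1}w$, one computes
\[
-\phi_0^{-1}\partial_r\bigl[\phi_0^2\,\partial_r(\phi_0^{-1}w)\bigr]
=-\partial_r^2 w+\f{\phi_0''}{\phi_0}w,
\]
by a direct two-line calculation (differentiate the product, cancel one $\phi_0'\partial_r w$ term). Since $\phi_0$ satisfies $-\phi_0''+(\f{3}{4r^2}+\f{r^2}{16})\phi_0=\phi_0$, the coefficient $\phi_0''/\phi_0$ equals $\f{3}{4r^2}+\f{r^2}{16}-1$, and the right-hand side becomes
\[
-\partial_r^2 w+\Bigl(\f{3}{4r^2}+\f{r^2}{16}-1\Bigr)w=\Bigl(\widetilde{A}_1-\f12\Bigr)w,
\]
which is the desired formula.

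For the coercivity statement, using the factorization I integrate by parts against $\bar w=\phi_0\bar u$: for $w\in C_0^\infty(\R_+)$,
\[
\Bigl\langle\Bigl(\widetilde{A}_1-\f12\Bigr)w,w\Bigr\rangle
=-\int_0^{+\infty}\partial_r\bigl[r^3g^2\,\partial_r u\bigr]\bar u\,dr
=\int_0^{+\infty}r^3g^2\,|\partial_r u|^2\,dr\ge 0,
\]
the boundary terms vanishing because $u$ is smooth and compactly supported in $(0,+\infty)$. Since $C_0^\infty(\R_+)$ is a core for $\widetilde{A}_1$, the inequality $\widetilde{A}_1\ge\f12$ follows by density. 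Finally, for $|k|\ge 2$ I just write $\widetilde{A}_k=\widetilde{A}_1+(k^2-1)/r^2$ and note that the extra potential is non-negative, which yields $\widetilde{A}_k\ge\widetilde{A}_1\ge\f12$.

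The only point requiring any care is the boundary/domain issue in the integration by parts and in invoking $C_0^\infty(\R_+)$ as a core (this is already asserted in the paper just before the statement). Everything else is a direct, mechanical computation using the eigenfunction identity $\widetilde{A}_1\phi_0=\f12\phi_0$; the conceptual content is entirely in recognizing $\phi_0=r^{\f32}g$ as the ground state and writing down the associated Darboux factorization.
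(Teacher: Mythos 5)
Your proof is correct and follows essentially the same route as the paper: the paper also sets $F=r^{3/2}g$, uses $F''=(\tfrac{3}{4r^2}-1+\tfrac{r^2}{16})F$ to rewrite $-F^{-1}\partial_r[F^2\partial_r(F^{-1}w)]$ as $(\widetilde{A}_1-\tfrac12)w$, and then integrates by parts to obtain $\|F\partial_r(F^{-1}w)\|^2\ge 0$ before comparing $\widetilde{A}_k$ to $\widetilde{A}_1$. Your added framing (identifying $r^{3/2}g$ as the ground state and viewing the identity as a Darboux factorization) is a nice conceptual gloss on the same computation, but does not change the argument.
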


\begin{proof} Let $F(r)=r^{\f32}g(r)$. Then we have
\begin{align*}
-r^{-\f32}g^{-1}\partial_r\big[r^3g^2\partial_r(r^{-\f32}g^{-1}w)\big]=&-F^{-1}\partial_r\big[F^2\partial_r(F^{-1}w)\big]\\
=&\big(-\partial_r^2+\f{F''}{F}\big)w=(-\partial_r^2+\f34\cdot\f{1}{r^2}-1+\f{r^2}{16})w=\big(\widetilde{A}_1-\f12\big)w,
\end{align*}
here we used $F''=(\f34\cdot\f{1}{r^2}-1+\f{r^2}{16})F$.

Then for any $w\in D$, we have
\begin{align*}
\big\langle(\widetilde{A}_1-\f12)w,w\big\rangle=&-\big\langle F^{-1}\partial_r\big[F^2\partial_r(F^{-1}w)\big],w\big\rangle
=\big\|F\partial_r(F^{-1}\omega) \big\|^2\geq 0.
\end{align*}
This shows that $\widetilde{A}_k\geq\widetilde{A}_1\geq \f12$.
\end{proof}

\begin{lemma}\label{lem:coer-A1+f}
It holds that
\begin{align}
\widetilde{A}_1+f(r)\gtrsim \f{1}{r^2}+r^2.
\end{align}
\end{lemma}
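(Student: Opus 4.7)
The key observation is that two independent lower bounds on $\widetilde{A}_1$, combined by averaging, already produce the desired weight; no refined analysis of $f$ beyond the nonnegativity already established in Lemma~\ref{lem:com-TA1} is required.

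First, Lemma~\ref{lem:coer-A1} gives the constant coercivity $\widetilde{A}_1 \geq \tfrac{1}{2}$ as a quadratic form on $D_1$. Second, writing $\widetilde{A}_1 = -\partial_r^2 + V_0(r)$ with
\[
V_0(r) := \frac{3}{4r^2} + \frac{r^2}{16} - \frac{1}{2},
\]
and using $\langle -\partial_r^2 w, w\rangle = \|\partial_r w\|^2 \geq 0$ for $w \in C_0^\infty(\mathbb{R}_+)$ (extended by density to the form domain), one has the pointwise potential bound $\widetilde{A}_1 \geq V_0(r)$.

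Taking the half-sum of these two inequalities gives
\[
\widetilde{A}_1 \;=\; \tfrac{1}{2}\widetilde{A}_1 + \tfrac{1}{2}\widetilde{A}_1 \;\geq\; \tfrac{1}{4} + \tfrac{1}{2}V_0(r) \;=\; \frac{3}{8r^2} + \frac{r^2}{32},
\]
where the $+\tfrac{1}{4}$ produced by the first bound cancels exactly the $-\tfrac{1}{4}$ hidden inside $\tfrac{1}{2}V_0$. Since $f(r) \geq 0$ by Lemma~\ref{lem:com-TA1}, one concludes
\[
\widetilde{A}_1 + f(r) \;\geq\; \frac{3}{8r^2} + \frac{r^2}{32} \;=\; \frac{1}{32}\Big(\frac{12}{r^2} + r^2\Big) \;\geq\; \frac{1}{32}\Big(\frac{1}{r^2} + r^2\Big),
\]
which is the stated estimate, with explicit implicit constant $\tfrac{1}{32}$.

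There is no serious obstacle in this argument. The weight $\frac{1}{r^2} + r^2$ is generated entirely by the operator $\widetilde{A}_1$ itself, through the interplay between its spectral gap $\tfrac{1}{2}$ (which handles the region near $r=1$ where $V_0$ dips below zero) and its confining potential $V_0$ (which supplies the $1/r^2$ and $r^2$ growth at the two ends); the nonnegativity of $f$ is used only as a safety net and the precise shape of $f$ plays no role.
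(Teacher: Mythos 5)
Your argument is correct, and it is genuinely simpler than the one in the paper. The paper never uses the operator-level coercivity $\widetilde{A}_1\ge \tfrac12$ here; instead, it works purely pointwise with $\widetilde{A}_1 \ge V_0(r):=\tfrac{3}{4r^2}+\tfrac{r^2}{16}-\tfrac12$, which dips below zero around $r\approx 2$, and therefore \emph{must} bring $f$ into play to fill the valley. Concretely, the paper derives the explicit lower bound $f(r)\ge \tfrac{r^2}{4(e^{r^2/4}-1-r^2/4)}$ and then establishes positivity and the two-ended growth of $h(r)=V_0(r)+\tfrac{r^2}{4(e^{r^2/4}-1-r^2/4)}$ via a Taylor-series computation with coefficients $a_2=\tfrac{35}{32},\ a_3=-\tfrac{7}{32},\ a_4=\tfrac{19}{384}$ and a discriminant check $2\sqrt{a_2a_4}>|a_3|$. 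Your half-sum trick
\[
\widetilde{A}_1=\tfrac12\widetilde{A}_1+\tfrac12\widetilde{A}_1\ \ge\ \tfrac14+\tfrac12 V_0(r)\ =\ \frac{3}{8r^2}+\frac{r^2}{32}\ \ge\ \frac{1}{32}\Big(\frac{1}{r^2}+r^2\Big),
\]
combined with $f\ge 0$ from Lemma~\ref{lem:com-TA1}, avoids the Taylor calculation entirely and yields an explicit constant. What you trade away is the quantitative information about $f$ itself that the paper extracts (which could be reused elsewhere), but for the purposes of Lemma~\ref{lem:coer-A1+f} that information is unnecessary, and your route is cleaner. One small technical remark: both of your form inequalities $\widetilde{A}_1\ge \tfrac12$ and $\widetilde{A}_1\ge V_0$ are justified first on $C_0^\infty(\R_+)$ and then extended by density, exactly as in the paper, so the averaging step is legitimate.
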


\begin{proof}
By the proof of Lemma \ref{lem:TB-relation}, we know that
\begin{align*}
f(r)=&r^2\f{\Big\{\sum\limits_{n=2}^{+\infty}(\f{2}{(n-1)!}-\f{3}{n!})(\f{r^2}{4})^n\Big\}}{4(\f{r^2}{4}+1-e^{r^2/4})^2}\geq r^2\f{\sum\limits_{n=2}^{+\infty}\f{1}{n!}(\f{r^2}{4})^n}{4(\f{r^2}{4}+1-e^{r^2/4})^2}\\
\geq&\f{r^2}{4(e^{\f{r^2}{4}}-1-\f{r^2}{4})}.
\end{align*}

Let $h(r)=\f{3}{4}\cdot\f{1}{r^2}+\f{r^2}{16}-\f12+\f{r^2}{4(e^{\f{r^2}{4}}-1-\f{r^2}{4})}$. Then there exists $\e_0\in(0,1)$ so that $h(r)\gtrsim \f{1}{r^2}$ for $r<\e_0$ and $h(r)\gtrsim {r^2}$ for $r>\f{1}{\e_0},$ and $h(r)$ can attain its minimum.
Thus, if $h(r)>0$, $h(r)$ has a positive lower bound. For this, let $u=\f{r^2}{4}$. Then by Taylor's expansion, we get
\begin{align*}
h(r)=&\f{3}{16}\cdot\f{1}{u}+\f{1}{4}u-\f12+\f{u}{e^u-1-u}\\
=&\f{\f{3}{16}\sum\limits_{n=2}^{+\infty}\f{1}{n!}u^n+\f14u^2\sum\limits_{n=2}^{+\infty}\f{1}{n!}u^n-\f12u\sum\limits_{n=2}^{+\infty}\f{1}{n!}u^n+u^2}{u(e^u-1-u)}\\
=&\f {\sum\limits_{n=2}^{+\infty} a_nu^n}{u(e^u-1-u)},
\end{align*}
where
\begin{align*}
&a_2=\f{35}{32},\quad a_3=-\f{7}{32},\quad a_4=\f{19}{384},\quad 2\sqrt{a_2a_4}>|a_3|,\\
&a_n=\f{1}{n!}\big(\f{3}{16}+\f{n(n-1)}{4}-\f{n}{2}\big)>0(n\geq5).
\end{align*}
Hence, there exists $c_0>0$ such that $h(r)\geq c_0$. So, there exists $C>0$ such that for any $r\in[\e_0,\f{1}{\e_0}]$, we have
$h(r)\geq C(\f{1}{r^2}+r^2).$

Summing up, we conclude that
\begin{align*}
\widetilde{A}_1+f(r)\ge \f{3}{4r^2}+\f{r^2}{16}-\f12+f(r)\ge h(r)\gtrsim \f{1}{r^2}+r^2.
\end{align*}
The proof is completed.
\end{proof}

\subsection{Resolvent estimate of $\cL_1$}

In this subsection, we prove Theorem \ref{thm:H1}. It suffices to show that for any $u=Tw, w\in\{r^{\f32}g(r)\}^{\perp}\cap D_1$,
\ben
\|\cL_1u\|\gtrsim |\beta_1|^\f13\|u\|.
\een
The proof is split into three cases.
\smallskip

{\bf Case 1.} $\nu_1\geq1$

Be Lemma \ref{lem:coer-A1+f}, we get
\begin{align*}
|\langle \cL_1u,u \rangle|&\sim \langle (\tilde{A}_1+f)u,u \rangle+|\beta_1|\langle(\nu_1-\sigma(r))u,u \rangle\\
&\gtrsim \langle (\f{1}{r^2}+r^2)u,u \rangle+|\beta_1|\langle(1-\sigma(r))u,u \rangle.
\end{align*}
Using the fact that
\beno
1-\sigma(r)=1-\f{1-e^{-\f{r^2}{4}}}{r^2/4}\sim r^2(r\to 0),\quad \lim\limits_{r\to\infty}1-\sigma(r)=1,
\eeno
we deduce that
\begin{align*}
&\int_0^1\big[\f{1}{r^2}+|\beta_1|(1-\sigma(r))\big]|u|^2dr\gtrsim \int_0^1 (\f{1}{r^2}+|\beta_1|r^2)|u|^2dr\gtrsim \int_0^1|\beta_1|^{\f12}|u|^2dr,\\
&\int_1^{+\infty}\big[\f{1}{r^2}+r^2+|\beta_1|(1-\sigma(r))\big]|u|^2dr\gtrsim\int_{1}^{+\infty}(1+|\beta_1|)|u|^2dr\gtrsim \int_1^{+\infty}|\beta_1|^{\f12}|u|^2dr,
\end{align*}
which show that for $\nu_1\ge 1$,
\begin{align}
|\langle \cL_1u,u \rangle|\gtrsim |\beta_1|^{\f12}\|u\|^2.
\end{align}

{\bf Case 2}. $\nu_1\leq0$

In this case, we have by Lemma \ref{lem:coer-A1+f} that
\begin{align*}
|{\langle \cL_1}u,u \rangle|&\sim \langle (\tilde{A}_1+f)u,u \rangle
+|\beta_1|\langle(\sigma(r)-\nu_1)u,u \rangle\\
&\gtrsim \langle (\f{1}{r^2}+r^2)u,u \rangle+|\beta_1|\langle \sigma(r)u,u \rangle.
\end{align*}
Thanks to $\lim\limits_{r\to0}\sigma(r)=1$ and $\sigma(r)\sim\f{1}{r^2}(r\to\infty)$, we infer that
\begin{align*}
&\int_0^1\big[\f{1}{r^2}+r^2+|\beta_1|\sigma(r)\big]|u|^2dr\gtrsim\int_0^1(1+|\beta_1|)|u|^2dr\gtrsim \int_0^1|\beta_1|^{\f12}|u|^2dr,\\
&\int_1^{+\infty}(r^2+|\beta_1|\sigma(r))|u|^2dr\gtrsim\int_{1}^{+\infty}(r^2+\f{1}{r^2}|\beta_1|)|u|^2dr\gtrsim \int_1^{+\infty}|\beta_1|^{\f12}|u|^2dr,
\end{align*}
which shows that for $\nu_1\le 0$,
\begin{align}
|\langle \cL_1u,u\rangle|\gtrsim|\beta_1|^{\f12}\|u\|^2.
\end{align}

{\bf Case 3.} $0<\nu_1<1$

Let $\nu_1=\sigma(r_1)$ for some $r_1>0$. We split this case into two subcases:
\begin{align*}
 |\beta_1|\leq\max\big(\f{1}{r_1^4},r^6_1\big)\quad \text{and}\quad|\beta_1|\geq\max\big(\f{1}{r_1^4},r^6_1\big).
\end{align*}

\begin{lemma}
If $|\beta_1|\leq\max\big(\f{1}{r_1^4},r^6_1\big)$, then we have
\begin{align}
\|{\mathcal{L}}_1u\|\gtrsim|\beta_1|^{\f13}\|u\|.\nonumber
\end{align}
\end{lemma}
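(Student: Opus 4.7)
The plan is to combine the coercive real part (Lemma~\ref{lem:coer-A1+f}) on most of $\mathbb{R}_+$ with a localized use of the imaginary part near the critical radius $r=r_1$. The hypothesis $|\beta_1|\le\max(1/r_1^4,r_1^6)$ is precisely what keeps $r_1$ away from the ``bad annulus'' where the coercivity is weakest, so that the sign of $\sigma-\nu_1$ is constant there and the imaginary part can be put to work.

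After a trivial reduction to $|\beta_1|\ge 1$ (in the opposite regime $\widetilde{A}_1+f\gtrsim 1\ge|\beta_1|^{1/3}$ already gives the conclusion), set $A=[|\beta_1|^{-1/6},|\beta_1|^{1/6}]$. On $A^c$ one has $1/r^2+r^2\gtrsim|\beta_1|^{1/3}$, so Lemma~\ref{lem:coer-A1+f} directly yields $\|\mathcal{L}_1 u\|\|u\|\gtrsim|\beta_1|^{1/3}\|u\mathbf{1}_{A^c}\|^2$. The hypothesis gives either $r_1\le|\beta_1|^{-1/4}\le|\beta_1|^{-1/6}$ (when $r_1\le 1$) or $r_1\ge|\beta_1|^{1/6}$ (when $r_1\ge 1$); in either subcase $r_1$ sits strictly outside $A$ with quantitative margin. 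Since $\sigma$ is strictly decreasing with $\sigma(r_1)=\nu_1$, the function $\sigma-\nu_1$ has constant sign on $A$. Using the Taylor expansions $\sigma(r)=1-r^2/8+O(r^4)$ as $r\to 0$ and $\sigma(r)=4/r^2+O(e^{-r^2/4})$ as $r\to\infty$, together with the margin between $r_1$ and $A$, one proves the pointwise lower bound $|\sigma(r)-\nu_1|\gtrsim|\beta_1|^{-1/3}$ on $A$.

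To exploit this, let $\chi$ be a smooth cutoff equal to $1$ on $A$, supported on a slight enlargement of $A$ still on the same side of $r_1$, with $\|\chi'\|_\infty\sim|\beta_1|^{1/6}$ and $\|\chi''\|_\infty\sim|\beta_1|^{1/3}$. Writing out $\langle\mathcal{L}_1 u,\chi u\rangle$ with the commutator $[\widetilde{A}_1,\chi]=-2\chi'\partial_r-\chi''$, the imaginary part has a main term
$$\beta_1\int\chi\,(\sigma-\nu_1)|u|^2\,dr,$$
which, thanks to the constant sign and the previous lower bound, exceeds $c\,|\beta_1|^{2/3}\|u\mathbf{1}_A\|^2$. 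The commutator contribution is absorbed by Young's inequality against $\|\partial_r u\|^2\le\operatorname{Re}\langle\mathcal{L}_1 u,u\rangle+\tfrac12\|u\|^2\lesssim\|\mathcal{L}_1 u\|\|u\|+\|u\|^2$. Combining with the real-part estimate,
$$\|\mathcal{L}_1 u\|\|u\|\gtrsim|\beta_1|^{1/3}\|u\mathbf{1}_{A^c}\|^2+|\beta_1|^{2/3}\|u\mathbf{1}_A\|^2\ge|\beta_1|^{1/3}\|u\|^2,$$
which is the desired estimate.

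The principal obstacle is that $\operatorname{Im}\langle\mathcal{L}_1 u,u\rangle$ is a \emph{signed} integral against $\sigma-\nu_1$, so a priori cancellations between $\{r<r_1\}$ and $\{r>r_1\}$ could kill the unsigned lower bound one wants. The cutoff $\chi$ removes this by restricting to one side of $r_1$; it is allowed precisely because the hypothesis provides quantitative separation of $r_1$ from $A$. The most technical points are the bookkeeping of constants in the two Taylor regimes of $\sigma$ (small and large $r$) and in the subcases $r_1\le 1$ vs.\ $r_1\ge 1$, as well as verifying that the commutator term is indeed dominated by the coercive and main imaginary contributions.
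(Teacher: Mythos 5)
Your strategy works, but it is genuinely different from the paper's treatment of this lemma, and one detail needs a fix. The paper's proof does not localize at all: Lemma~\ref{lem:beta-med} establishes \emph{pointwise} elementary inequalities such as $c_1 r^{-2}+c_2|\beta_1|(\nu_1-\sigma(r))\geq|\beta_1|^{1/2}$ for all $r>0$, with the \emph{signed} factor $\nu_1-\sigma(r)$ and no absolute value; the constant sign is handled once and for all by choosing the constants so that the $r^{-2}$ (resp.\ $1+r^2$) term dominates exactly on the region where $\nu_1-\sigma$ has the unfavorable sign. Integrating against $|u|^2$ and noting that $\langle r^{-2}u,u\rangle\lesssim\operatorname{Re}\langle\mathcal{L}_1 u,u\rangle$ while $|\beta_1|\,|\langle(\nu_1-\sigma)u,u\rangle|=|\operatorname{Im}\langle\mathcal{L}_1 u,u\rangle|$ then gives $\|\mathcal{L}_1 u\|\|u\|\gtrsim|\beta_1|^{1/2}\|u\|^2$ (resp.\ $|\beta_1|^{1/3}$ in the last subcase) directly, with no cutoff, no commutator, and no Young's inequality. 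Your one-sided cutoff idea is closer to the paper's treatment of the complementary regime $|\beta_1|\geq\max(r_1^{-4},r_1^6)$, where localization near $r_1$ is unavoidable. The detail that needs fixing: when $r_1\geq 1$ the hypothesis gives only $r_1\geq|\beta_1|^{1/6}$, so $r_1$ can sit \emph{exactly} at the right endpoint of your $A=[|\beta_1|^{-1/6},|\beta_1|^{1/6}]$; there the claimed quantitative margin does not exist, the pointwise bound $|\sigma(r)-\nu_1|\gtrsim|\beta_1|^{-1/3}$ fails near $r=r_1$, and no ``slight enlargement of $A$ still on the same side of $r_1$'' is available. The remedy is simply to shrink $A$ to, say, $[|\beta_1|^{-1/6},\tfrac12|\beta_1|^{1/6}]$: the coercive bound $r^2\gtrsim|\beta_1|^{1/3}$ still holds on the discarded band, $r_1$ is now genuinely separated from $A$, the support of $\chi$ can be kept on one side of $r_1$ with $\|\chi'\|_\infty\lesssim|\beta_1|^{1/6}$, and the rest of your argument (with $\|u'\|^2\lesssim\|\mathcal{L}_1 u\|\|u\|+\|u\|^2$ to absorb the commutator) closes.
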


\begin{proof}
If $|\beta_1|\leq1$, then $\f{1}{r^2}+r^2\geq1\geq |\beta_1|^{\f13}$. Lemma \ref{lem:coer-A1+f} gives
\begin{align*}
{|\langle{\mathcal{L}}_1u,u\rangle|\gtrsim \big\langle(r^2+\f{1}{r^2})u,u\big\rangle\geq|\beta_1|^{\f13}\|u\|^2}.
\end{align*}
If $1\leq|\beta_1|\leq\max(\f{1}{r_1^4},r^6_1)$, we only need to check the following cases
\begin{align*}
&r_1\leq1,\quad 1\leq|\beta_1|\leq\f{1}{r_1^4}\Longrightarrow \|{\mathcal{L}}_1u\|\gtrsim|\beta_1|^{\f12}\|u\|,\\
&r_1\geq1,\quad 1\leq|\beta_1|\leq r_1^4\Longrightarrow \|{\mathcal{L}}_1u\|\gtrsim|\beta_1|^{\f12}\|u\|,\\
&r_1\geq1,\quad r_1^4\leq|\beta_1|\leq r_1^6\Longrightarrow\|{\mathcal{L}}_1u\|\gtrsim|\beta_1|^{\f13}\|u\|.
\end{align*}
By Lemma \ref{lem:coer-A1+f} again, we have
\begin{align*}
 |\langle{\mathcal{L}}_1u,u\rangle|\gtrsim \big\langle(r^2+\f{1}{r^2})u,u\big\rangle +|\beta_1| |\langle(\nu_1-\sigma(r))u,u\rangle|,
\end{align*}
which along with Lemma \ref{lem:beta-med} gives our results.
\end{proof}

\begin{lemma}
If $|\beta_1|\geq\max(\f{1}{r_1^4},r^6_1)$, then we have
\begin{align}
\|{\mathcal{L}}_1u\|\gtrsim|\beta_1|^{\f13}\|u\|.\nonumber
\end{align}
\end{lemma}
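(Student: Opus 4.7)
The plan is to prove the stronger Airy-type bound $\|\mathcal L_1 u\|\gtrsim(|\beta_1||\sigma'(r_1)|)^{2/3}\|u\|$, which under our assumptions dominates $|\beta_1|^{1/3}\|u\|$. Using $-(r^3\sigma'(r))'=r^3e^{-r^2/4}$, one checks that $|\sigma'(r)|\sim r$ as $r\to 0^+$ and $|\sigma'(r)|\sim 1/r^3$ as $r\to+\infty$, so $|\sigma'(r_1)|^2\gtrsim\min(r_1^2,1/r_1^6)$, and the hypothesis $|\beta_1|\ge\max(1/r_1^4,r_1^6)$ is equivalent to $|\beta_1|\sigma'(r_1)^2\gtrsim 1$, i.e.\ $(|\beta_1||\sigma'(r_1)|)^{2/3}\gtrsim|\beta_1|^{1/3}$. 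Setting $\mu:=(|\beta_1||\sigma'(r_1)|)^{1/3}$ and $\delta:=\mu^{-1}$ (the Airy length), the target becomes $\|\mathcal L_1 u\|\gtrsim\mu^2\|u\|$.

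The core of the argument is a hypocoercive multiplier. Pick a smooth, bounded, strictly monotone function $\psi(r)$ with $\|\psi\|_\infty\le 1$ and $\|\psi'\|_\infty\lesssim 1/\delta$, the direction of monotonicity chosen so that $\beta_1\psi(r)(\sigma(r)-\nu_1)\ge 0$ everywhere (for example $\psi(r)=-\mathrm{sgn}(\beta_1)\tanh((r-r_1)/\delta)$). Since the potential part of $\widetilde A_1+f$ is real, testing $\mathcal L_1 u$ against $\psi u$ and taking imaginary parts leaves only the commutator of $-\partial_r^2$ with $\psi$:
\[
\text{Im}\langle \mathcal L_1 u,\psi u\rangle=\int\psi'(r)\,\text{Im}(u'\bar u)\,dr+|\beta_1|\int|\psi|\,|\sigma-\nu_1|\,|u|^2\,dr.
\]
The left-hand side is bounded by $\|\mathcal L_1 u\|\|u\|$ and the commutator by $\mu\|u'\|\|u\|$. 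On the far region $|r-r_1|\gtrsim\delta$ the integrand of the main term satisfies $|\psi(\sigma-\nu_1)|\gtrsim|\sigma'(r_1)|\delta$, giving
\[
\mu^2\int_{|r-r_1|\gtrsim\delta}|u|^2\,dr\lesssim\|\mathcal L_1 u\|\|u\|+\mu\|u'\|\|u\|.
\]

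For the complementary near region, the boundary behaviour $u=O(r^{3/2})$ at the origin (inherent to the domain $D_1$) and the fundamental theorem give $\sup_r|u(r)|^2\le 2\|u\|\|u'\|$, so $\mu^2\int_{|r-r_1|\le\delta}|u|^2\lesssim\mu^2\delta\|u\|\|u'\|=\mu\|u\|\|u'\|$. Adding both regions and using $\|u'\|^2\lesssim\text{Re}\langle\mathcal L_1 u,u\rangle\le\|\mathcal L_1 u\|\|u\|$ (from the representation in Lemma \ref{lem:coer-A1} and the coercivity of Lemma \ref{lem:coer-A1+f}), we arrive at $\mu^2\|u\|^2\lesssim\|\mathcal L_1 u\|\|u\|+\mu\|\mathcal L_1 u\|^{1/2}\|u\|^{3/2}$, which after division by $\mu^2\|u\|^2$ becomes a quadratic inequality in $t:=(\|\mathcal L_1 u\|/(\mu^2\|u\|))^{1/2}$ of the shape $1\lesssim t^2+t$, yielding $t\gtrsim 1$ and therefore $\|\mathcal L_1 u\|\gtrsim\mu^2\|u\|\gtrsim|\beta_1|^{1/3}\|u\|$ as required.

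The hard part will be at the boundary of the regime, where $\delta\sim r_1$ (i.e.\ $|\beta_1|\sim 1/r_1^4$ with $r_1\le 1$, or $|\beta_1|\sim r_1^6$ with $r_1\ge 1$). There the Airy window straddles either the singularity at $r=0$ or the algebraic-decay zone of $\sigma$, and the linear bound $|\sigma-\nu_1|\gtrsim|\sigma'(r_1)||r-r_1|$ breaks down on the far tail. To close the argument uniformly, one has to refine the multiplier so that it respects the additional coercive weight $r^{-2}+r^2$ from $\widetilde A_1+f$, or cut off $\psi$ away from the bad zone and recover the tail contribution from the real part directly.
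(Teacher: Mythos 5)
Your proposal is essentially the paper's own proof, differing only in the choice of multiplier: you use the smooth odd cutoff $\psi=-\mathrm{sgn}(\beta_1)\tanh((r-r_1)/\delta)$, while the paper uses $i\,\mathrm{sgn}(\beta_1)(\chi_{(0,r_-)}-\chi_{(r_+,\infty)})u$ with $r_\pm$ chosen in $(r_1-\delta,r_1+\delta)$ by a mean-value/pigeonhole argument so that the boundary evaluations $|u'\overline u(r_\pm)|$ are controlled by $\delta^{-1/2}\|u'\|\,\|u\|_{L^\infty}$. Your commutator term $\int\psi'\,\mathrm{Im}(u'\bar u)\lesssim\mu\|u'\|\|u\|$ plays exactly the role those boundary terms play in the paper, and the rest — the Airy length $\delta\sim(|\beta_1||\sigma'(r_1)|)^{-1/3}$, the far-field lower bound $|\sigma-\nu_1|\gtrsim\delta|\sigma'(r_1)|$ on $|r-r_1|\geq\delta$, the near-field Agmon bound $\|u\|_{L^\infty}^2\leq 2\|u\|\|u'\|$, the coercivity $\|u'\|^2\lesssim\|\mathcal L_1u\|\|u\|$, and the final self-improving quadratic inequality — line up one for one with the paper.

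Your closing concern is unfounded. You do not need the linear lower bound $|\sigma-\nu_1|\gtrsim|\sigma'(r_1)||r-r_1|$ on the far tail; you only need the constant lower bound $|\sigma-\nu_1|\gtrsim\delta|\sigma'(r_1)|$ on all of $\{|r-r_1|\geq\delta\}$. Since $\sigma$ is strictly decreasing, this follows for every $r<r_1-\delta$ and every $r>r_1+\delta$ from the single-point estimates $\sigma(r_1)-\sigma(r_1+\delta)\gtrsim\delta|\sigma'(r_1)|$ and $\sigma(r_1-\delta)-\sigma(r_1)\gtrsim\delta|\sigma'(r_1)|$, which hold because $\delta\leq\min(r_1,1/r_1)$ keeps $r_1\pm\delta$ in the comparable range $[r_1/2,2r_1]$ where Lemma~\ref{lem:sigma} applies. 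There is no genuine difficulty at the boundary of the regime $\delta\sim r_1$, and no refinement of the multiplier is required. One small wording correction: the hypothesis $|\beta_1|\geq\max(r_1^{-4},r_1^6)$ is not \emph{equivalent} to $|\beta_1|\sigma'(r_1)^2\gtrsim1$; it is strictly stronger when $r_1\leq1$ (since $r_1^{-4}\geq r_1^{-2}$), but it does imply it, which is all you use.
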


\begin{proof}Let $\delta>0$ be so that $\delta^3|\beta_1|\min(r_1,r^{-3}_1)=1$. Thanks to $|\beta_1|\geq\max(\f{1}{r_1^4},r^6_1)$, we get
\begin{align*}
|\beta_1|^{-\f12}\leq \min\big(r_1^2,\f 1 {r_1^3}\big).
\end{align*}
Thus, we have
\begin{align*}
\delta^3|\beta_1|^{\f12}\leq r_1\quad \text{for}\,\,r_1\leq1,\quad \delta^3|\beta_1|^{\f12}\leq 1\quad\text{for}\,\,r_1\geq 1,
\end{align*}
which in particular give $\delta^2|\beta_1|^{\f13}\leq1$. Also we have $0<\delta\leq\min(r_1,\f{1}{r_1})$. Hence, it suffices to show that
\begin{align}
\|u\|\lesssim\delta^2\|{\mathcal{L}}_1u\|.\label{eq:L1-coer1}
\end{align}

Let us choose $r_{-}\in(r_1-\delta,r_1)$ and $r_+\in(r_1,r_1+\delta)$ so that
\begin{align}
|u'(r_{-})|^2+|u'(r_+)|^2\leq\f{\big\|u' \big\|^2}{\delta}.\label{eq:u-cut}
\end{align}
We get by integration by parts that
\begin{align*}
&\rm{Re}\big\langle{\mathcal{L}}_1u,i \text{sgn}(\beta_1)(\chi_{(0,r_-)}-\chi_{(r_+,+\infty)})u\big\rangle\\
&=\rm{Re}\langle-\partial_r^2u+i\beta_1(\sigma-\nu_1)u, i \text{sgn}(\beta_1)(\chi_{(0,r_-)}-\chi_{(r_+,+\infty)})u\rangle\\
&=\rm{Re}\left(\int_0^{r_-}(-i\text{sgn}(\beta_1)|\partial_ru|^2
+|\beta_1|(\sigma-\nu_1)|u|^2)dr+i\text{sgn}(\beta_1)(u'\overline{u})(r_-)\right)\\
&\quad+\rm{Re}\left(\int_{r_+}^{+\infty}(i\text{sgn}(\beta_1)|\partial_ru|^2
+|\beta_1|(\nu_1-\sigma)u|^2)dr+i\text{sgn}(\beta_1)(u'\overline{u})(r_+)\right)\\
&\geq\int_0^{r_-}|\beta_1|(\sigma-\nu_1)|u|^2dr+\int_{r_+}^{+\infty}|\beta_1|(\nu_1-\sigma)|u|^2dr
-|(u'\overline{u})(r_-)|-|(u'\overline{u})(r_+)|.
\end{align*}
Due to $0<\delta\leq\min(r_1,\f{1}{r_1})$, $0<r_1-\delta<r_1+\delta\leq2r_1$. Then we get by Lemma \ref{lem:sigma} that
\begin{align*}
&\sigma(r)-\nu_1\geq\sigma(r_1-\delta)-\sigma(r_1)\gtrsim\delta|\sigma'(r_1)|\quad 0<r<r_1-\delta,\\
&\nu_1-\sigma(r)\geq\sigma(r_1)-\sigma(r_1+\delta)\gtrsim\delta|\sigma'(r_1)|\quad r>r_1+\delta,
\end{align*}
from which and \eqref{eq:u-cut}, we infer that
\begin{align*}
&\rm{Re}\langle{\mathcal{L}}_1u,i \text{sgn}(\beta_1)(\chi_{(0,r_-)}-\chi_{(r_+,+\infty)})u\rangle\\
&\geq \int_0^{r_-}|\beta_1|(\sigma-\nu_1)|u|^2dr+\int_{r_+}^{+\infty}|\beta_1|(\nu_1-\sigma)|u|^2dr
-|(u'\overline{u})(r_-)|-|(u'\overline{u})(r_+)|\\
&{\geq C^{-1}} |\beta_1\delta\sigma'(r_1)|\big\|u \big\|^2_{L^2(\mathbb{R}_+\setminus(r_1-\delta,r_1+\delta))}-\f{{2}}{\delta^{\f12}}\big\|u' \big\|_{L^2}\big\| u\big\|_{L^{\infty}}.
\end{align*}
Thanks to $\sigma'(r)=\f{2}{r}(e^{-\f{r^2}{4}}-\f{1-e^{-\f{r^2}{4}}}{r^2/4})$, we have $|\sigma'(r)|\sim \f{1}{r^3}(r\to\infty)$ and $|\sigma'(r)|\sim r(r\to0)$. Thus, $|\sigma'(r)|\sim\min(r,\f{1}{r^3})$. Recall that $\delta^3|\beta_1|\min(r_1,r_1^{-3})=1$. Then $|\beta_1\delta^3\sigma'(r_1)|\sim1$. Thus, we obtain
\begin{align*}
\|u\|^2_{L^2(\mathbb{R}_+\setminus(r_1-\delta,r_1+\delta))}\lesssim \delta^2\|u\|\|{\mathcal{L}}_1u\|_{L^2}+\delta^{\f32}\|u'\|\|u\|_{L^{\infty}}.
\end{align*}
On the other hand, it is obvious that
\beno
\|u'\|^2\leq \|u\|\|{\mathcal{L}}_1u\|_{L^2},\quad \|u\|_{L^\infty}\le \|u\|^\f12\|u'\|^\f12.
\eeno
Consequently, we deduce that
\begin{align*}
\|u\|^2=&\|u\|^2_{L^2(\mathbb{R}_+\setminus(r_1-\delta,r_1+\delta))}+\|u\|^2_{L^2(r_1-\delta,r_1+\delta)}\\
\lesssim& \delta^2\|u\|\|{\mathcal{L}}_1u\|+\delta^{\f32}\|u'\|\|u\|_{L^{\infty}}+\delta\|u\|_{L^{\infty}}^2\\
\lesssim&\delta^2\|u\|\|{\mathcal{L}}_1u\|+\delta^2\|u'\|^2+\delta\|u\|_{L^{\infty}}^2\\
\leq &\delta^2\|u\|\big\|{\mathcal{L}}_1u\|+\delta^2\|u\|\|{\mathcal{L}}_1u\|+\delta\|u'\|\|u\|\\
\lesssim &\|u\|(\delta^2\|{\mathcal{L}}_1u\|)+\|u\|^{\f32}(\delta^2\|{\mathcal{L}}_1u\|)^{\f12},
\end{align*}
which implies \eqref{eq:L1-coer1}.
\end{proof}

\subsection{Sharpness of pseudospectral bound}

Finally, let us prove the sharpness of the pseudospectral bound of $\widetilde{\cH}_1$. That is, there exist
$\lambda\in\mathbb{R}$ and $ v\in\{r^{\f32}g(r)\}^{\perp}\cap D_1$, such that
\begin{align}\label{eq:psudo}
\|\widetilde{\mathcal{H}}_1v\|\leq C |\beta_1|^{\f13}\|v\|.
\end{align}
Take $\lambda\in \R$ so that $|\beta_1|= r_1^6\geq1$. We take $u(r)=\eta(r_1(r-r_1))$, where
 $\eta(r)=r^2(r-1)^2$ for ${0<r<1}$, $\eta(r)=0$ for ${r(r-1)\geq 0}$ .
Then we have
\beno
\|u\|=r_1^{-\f12}\|\eta\|,\quad  \|\partial_r^2u\|=r_1^{\f32}\|\partial_r^2\eta\|\leq Cr_1^2\|u \|.
\eeno
By Lemma \ref{lem:sigma}, we have
\begin{align}
|\beta_1(\sigma(r)-\sigma(r_1))|\sim |\beta_1\sigma'(r_1)||r-r_1|\leq C\f{|\beta_1|}{r_1^4}\leq C r_1^2,\quad \text{for}\,\,|r-r_1|\leq\f{1}{r_1},\nonumber
\end{align}
and we also have that for ${0<r-r_1<\f{1}{r_1}}$,
\begin{align}
\big|(\f{3}{4r^2}+\f{r^2}{16}-\f12+f)u\big|\leq Cr_1^2 |u|.\nonumber
\end{align}
Thus, we can conclude that
\begin{align*}
\|{\mathcal{L}}_1u \|\leq Cr_1^2\|u \|,
\end{align*}
which along with Lemma \ref{lem:T-p1} gives
\begin{align*}
\|\widetilde{\mathcal{H}}_1T^*u \|=\|T\widetilde{\mathcal{H}}_1T^*u \|=&\|{\mathcal{L}}_1u\|\leq C|\beta_1|^{\f13}\|u\|=C|\beta_1|^{\f13}\|T^*u \|.
\end{align*}
This gives \eqref{eq:psudo} by taking $v=T^*u$.

\section{Resolvent estimate of $\widetilde{\mathcal{H}}_k, k\ge 2$}

In this section, we will prove the following resolvent estimate for $\widetilde{\mathcal{H}}_k, k\ge 2$.

\begin{theorem}\label{thm:Hk}
Let $k\ge 2$. For any $\lambda\in\mathbb{R}$ and $w\in D$, we have
\begin{align}
\|\widetilde{\mathcal{H}}_kw\big\|\gtrsim |\beta_k|^{\f13}\|w\|.
\end{align}
\end{theorem}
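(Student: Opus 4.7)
The plan is to mirror the three-step strategy of Section 4 for $|k|=1$, exploiting the crucial simplification that $\ker \widetilde{B}_k=\{0\}$ when $|k|\ge 2$ (see \eqref{Ker-B}), so no subspace projection is needed and the wave operator should be unitary on all of $L^2(\R_+,dr)$.

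\textbf{Step 1 (wave operator).} I would construct a bounded operator $T_k$ on $L^2(\R_+,dr)$ intertwining $\widetilde{B}_k$ with the multiplication by $\sigma(r)$, i.e.\ $T_k\widetilde{B}_k=\sigma(r)T_k$, in the spirit of Lemma \ref{lem:TB-relation}. Guided by the $k=1$ case, a natural candidate has the form
$$T_k w(r)=w(r)+\frac{g(r)}{c_k(r)\,r^{|k|+1/2}}\int_0^r s^{|k|+1/2}g(s)w(s)\,ds,$$
where $c_k(r)$ is chosen so that a generalization of the identity \eqref{eq:I1-1} holds for the kernel $\min(r/s,s/r)^{|k|}$; this reduces to a direct but tedious integration-by-parts calculation using the Green-function representation of $\widetilde{\mathcal K}_k$ on $(0,\infty)$. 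Since $\ker\widetilde B_k=\{0\}$, I expect $T_k^*T_k=T_kT_k^*=I$, so $T_k$ is unitary.

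\textbf{Step 2 (commutator and model operator).} Following Lemma \ref{lem:com-TA1}, I would compute $[T_k,\widetilde{A}_k]$ explicitly and expect an identity of the form $[T_k,\widetilde{A}_k]w=f_k(r)T_kw$ with a non-negative potential $f_k(r)\ge 0$, whose positivity follows by a Taylor-series argument analogous to the proof of Lemma \ref{lem:com-TA1}. Setting $u=T_k w$, the identity yields
$$T_k\widetilde{\mathcal H}_k T_k^{-1}u=\mathcal L_k u:=\widetilde A_k u+f_k(r)u+i\beta_k\sigma(r)u-i\lambda u,$$
so the desired estimate reduces to $\|\mathcal L_k u\|\gtrsim|\beta_k|^{1/3}\|u\|$.

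\textbf{Step 3 (coercivity and case analysis).} I would first prove the analog of Lemma \ref{lem:coer-A1+f}, namely $\widetilde A_k+f_k\gtrsim \tfrac{1}{r^2}+r^2$. For $|k|\ge 2$ this is actually easier than for $k=1$: the centrifugal term $\frac{k^2-1/4}{r^2}$ alone (with $k^2-1/4\ge 15/4$) already dominates the $-\tfrac12$, and together with $\tfrac{r^2}{16}$ gives the required lower bound without even having to use $f_k$. Then I would repeat the three-case split of Section 4.3 verbatim:
\begin{itemize}
\item $\nu_k\ge 1$: use $|\beta_k|(1-\sigma(r))$ plus coercivity;
\item $\nu_k\le 0$: use $|\beta_k|\sigma(r)$ plus coercivity;
\item $0<\nu_k<1$, with $\nu_k=\sigma(r_k)$, split according to $|\beta_k|\lessgtr\max(r_k^{-4},r_k^6)$; the critical subcase uses the cut-off test function $i\,\mathrm{sgn}(\beta_k)(\chi_{(0,r_-)}-\chi_{(r_+,\infty)})u$ with $r_\pm$ chosen near $r_k$ satisfying $|u'(r_\pm)|^2\le\|u'\|^2/\delta$, and $\delta$ defined by $\delta^3|\beta_k|\min(r_k,r_k^{-3})=1$, exactly as in the proof for $\mathcal L_1$.
\end{itemize}
Each sub-step relies only on pointwise properties of $\sigma$ (namely $\sigma(0)=1$, $\sigma(r)\sim 4/r^2$ at infinity, and $|\sigma'(r)|\sim\min(r,r^{-3})$) and is independent of $k$, so the constants can be chosen uniformly in $|k|\ge 2$.

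\textbf{Main obstacle.} The delicate step is Step 1: finding the correct profile $c_k(r)$ and establishing the intertwining identity $T_k\widetilde B_k=\sigma(r)T_k$ and the unitarity $T_k^*T_k=I$. This requires a new integration-by-parts identity generalizing \eqref{eq:I1-1} to the kernel $\min(r/s,s/r)^{|k|}$, together with a non-trivial algebraic verification that the commutator-produced potential $f_k(r)$ remains non-negative for every $|k|\ge 2$. Once Step 1 is in place, Steps 2--3 should proceed along essentially the same lines as Section 4.
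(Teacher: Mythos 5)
Your plan hinges entirely on Step 1 — constructing a unitary intertwiner $T_k$ with $T_k\widetilde{B}_k=\sigma T_k$ for all $|k|\ge 2$ — and this is precisely the step that is unjustified and that the paper does not take. The $k=1$ construction is not abstract scattering theory; it exploits a concrete algebraic coincidence: the function $r^{3/2}g(r)$ lies simultaneously in $\ker\widetilde B_1$ and in $\ker(\widetilde A_1-\tfrac12)$, and the identity $(r^3\sigma'(r))'=-r^3g(r)^2$ links $\sigma$, $g$ and this kernel element, which is why the Volterra-type ansatz $Tw=w+\tfrac{g(r)}{\sigma'(r)r^{3/2}}\int_0^r s^{3/2}g(s)w(s)\,ds$ closes under the key identity \eqref{eq:I1-1}. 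For $|k|\ge 2$ there is no such distinguished function — indeed $\ker\widetilde B_k=\{0\}$ is exactly the statement that there is no handle — so your proposed $c_k(r)$ is not just "tedious to find"; there is no reason it should exist in that Volterra form. Moreover, "$\ker\widetilde B_k=\{0\}$ therefore $T_k^*T_k=T_kT_k^*=I$" does not follow: unitarity of an intertwiner between $\widetilde B_k$ and multiplication by $\sigma$ would amount to unitary equivalence of the two self-adjoint operators, which is a nontrivial spectral assertion (absence of embedded eigenvalues of $\widetilde B_k$, matching spectral multiplicities) that you have not established. Triviality of the kernel removes one eigenvalue at $0$; it does not preclude eigenvalues in $(0,1)$.

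The paper's actual proof of Theorem \ref{thm:Hk} sidesteps the wave operator altogether. In place of the intertwining identity it proves the two-sided operator inequality $(1-\tfrac1{|k|})\sigma\le \widetilde B_k\le\sigma$ (Lemma \ref{lem:HktoLk}), which is where $|k|\ge 2$ is essential (for $|k|=1$ the lower bound degenerates to $0$, which is why the $k=1$ case needs the wave operator). This coercivity suffices for the regimes $\nu_k\ge 1$, $\nu_k\le 0$, and the moderate-$\beta_k$ regime (note the thresholds in the paper are $\max(|k|^3/r_k^4,\,|k|^3,\,r_k^6)$, not your $k$-independent $\max(r_k^{-4},r_k^6)$, because the centrifugal term contributes $k^2/r^2$). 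The delicate regime $|\beta_k|\ge\max(|k|^3/r_k^4,|k|^3,r_k^6)$ is handled by working directly with the nonlocal operator: one sets $\psi=\widetilde{\mathcal K}_k[gw]$ and decomposes $\psi=\psi_1+\psi_2$ so that $\psi_1$ solves the ODE $(-\partial_r^2+\tfrac{k^2-1/4}{r^2})\psi_1=gw$ with a Dirichlet condition at $r_k$; a sign lemma (Lemma \ref{lem:K}) controls $\int_0^{r_k}g\widetilde{\mathcal K}_k^{(r_k)}[gw]\overline w$ by $(\sigma-\nu_k)|w|^2$; and two test-function computations (with $i\,\mathrm{sgn}(\beta_k)(\chi_{(0,r_k)}-\chi_{(r_k,\infty)})w$ and with $i\,\mathrm{sgn}(\beta_k)\tfrac{\chi_{\R_+\setminus(r_k-\delta,r_k+\delta)}}{\sigma-\nu_k}w$) produce, after absorbing everything into an energy functional $\cE(w)$ with seven terms and an $L^\infty$-type estimate on $w'$ and $g\psi$ near $r_k$ (Lemma \ref{lem:F}), the bound $\|w\|\lesssim\delta^2\|\widetilde{\mathcal H}_kw\|$. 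None of this appears in your outline. Your Steps 2--3 (commutator identity, coercivity of $\widetilde A_k+f_k$, dyadic case split, cut-off test function near $r_k$) are sensible conditional on Step 1, but since Step 1 is the genuine difficulty and is left as an unresolved ansatz, the proposal as written does not constitute a proof; you would need to either actually produce $T_k$ with the claimed properties, or adopt the paper's operator-inequality route.
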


\subsection{Coercive estimates of $\widetilde{A}_k$ and $\widetilde{B}_k$}

\begin{lemma}\label{lem:HktoLk}
For any $|k|\geq1$ and $w \in L^2(\mathbb{R}_+;dr)$, we have
\begin{align*}
&\langle (I-\widetilde{B}_k)w,w\rangle\geq\int_{0}^{+\infty}(1-\sigma(r))|w|^2dr,\\
&\langle \widetilde{B}_kw,w\rangle\geq(1-\f{1}{|k|})\int_{0}^{+\infty}\sigma(r)|w|^2dr.
\end{align*}
\end{lemma}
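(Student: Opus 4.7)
Since $\widetilde{B}_k = \sigma(r) - g\widetilde{\mathcal{K}}_k[g\,\cdot\,]$, both inequalities will follow from the two-sided operator bound
\[
0 \;\leq\; g\widetilde{\mathcal{K}}_k[g\,\cdot\,] \;\leq\; \frac{1}{|k|}\,\sigma(r)
\]
on $L^2(\mathbb{R}_+,dr)$. The first inequality of the lemma reads $\langle(I-\widetilde{B}_k)w,w\rangle = \int(1-\sigma)|w|^2\,dr + \langle g\widetilde{\mathcal{K}}_k[gw],w\rangle$, so positivity of $g\widetilde{\mathcal{K}}_k g$ gives it at once; the second amounts to $\langle g\widetilde{\mathcal{K}}_k[gw],w\rangle \leq \frac{1}{|k|}\int \sigma |w|^2\,dr$.

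\textbf{Positivity (first inequality).} Recall $\widetilde{\mathcal{K}}_k = r^{1/2}\mathcal{K}_k r^{-1/2}$, where $\mathcal{K}_k$ is the Green's function for $-\Delta_k = -\partial_r^2 - r^{-1}\partial_r + k^2/r^2$ on $L^2(\mathbb{R}_+,r\,dr)$. Setting $\psi := \mathcal{K}_k[r^{-1/2}gw]$, so that $-\Delta_k\psi = r^{-1/2}gw$, integration by parts against $\psi$ with weight $r\,dr$ yields the identity
\[
\langle g\widetilde{\mathcal{K}}_k[gw],w\rangle \;=\; \int_0^{+\infty}\!\psi\cdot r^{-1/2}gw\cdot r\,dr \;=\; \int_0^{+\infty}\!\bigl(|\psi'|^2 + k^2|\psi|^2/r^2\bigr)\,r\,dr \;\geq\; 0.
\]
This proves the first assertion.

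\textbf{Upper bound (second inequality).} Denote $J := \langle g\widetilde{\mathcal{K}}_k[gw],w\rangle = \int r^{1/2}gw\,\psi\,dr$. Applying Cauchy-Schwarz with the weight $\sigma$,
\[
J \;\leq\; \Bigl(\int_0^{+\infty}\!\sigma|w|^2\,dr\Bigr)^{1/2}\Bigl(\int_0^{+\infty}\!\frac{rg^2}{\sigma}\,\psi^2\,dr\Bigr)^{1/2}.
\]
Combined with the positivity identity, the second inequality of the lemma reduces to the weighted Hardy bound
\[
|k|\int_0^{+\infty}\!\frac{rg^2}{\sigma}\,\psi^2\,dr \;\leq\; \int_0^{+\infty}\!\bigl(|\psi'|^2 + k^2|\psi|^2/r^2\bigr)\,r\,dr,
\]
since this would give $J^2 \leq \frac{J}{|k|}\int\sigma|w|^2\,dr$, hence $J\leq\frac{1}{|k|}\int\sigma|w|^2\,dr$.

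\textbf{Main obstacle and strategy.} The weighted Hardy inequality above is the crux. A direct pointwise comparison $rg^2/\sigma \leq k^2/(|k|r)$ reduces to $r^4 e^{-r^2/4} \leq 4|k|(1-e^{-r^2/4})$; numerically the ratio $r^4e^{-r^2/4}/(4(1-e^{-r^2/4}))$ peaks at roughly $2.6$ (near $r\approx 2.5$), so the naive bound works only for $|k|\geq 3$ and fails for $|k|=1,2$. To overcome this, I plan to exploit the key identity $g^2 = \sigma + \tfrac{r\sigma'}{2}$, which follows by direct differentiation of $\sigma(r) = 4(1-g^2)/r^2$. Using this identity and the multiplier $r\sigma\psi$ on the ODE $-\Delta_k\psi = r^{-1/2}gw$, integration by parts produces the refined identity
\[
\int r\sigma|\psi'|^2\,dr + \int\!\Bigl[\tfrac{rg^2}{2}+\tfrac{2g^2}{r}+\tfrac{(k^2-2)\sigma}{r}\Bigr]\psi^2\,dr \;=\; \int r^{1/2}\sigma g w\,\psi\,dr,
\]
which, together with the baseline identity for $J$, provides the positive combination of terms needed to close the estimate for $|k|\geq 2$. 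For $|k|=1$, the claim collapses to $\langle\widetilde{B}_1 w,w\rangle \geq 0$, i.e.\ mere positivity of $\widetilde{B}_1$ (consistent with $\ker\widetilde{B}_1=\mathrm{span}\{r^{3/2}g\}$), which follows from the positivity step above without any Hardy-type reinforcement. The most delicate step will be verifying that the coefficient combination in the refined identity dominates $|k|\,rg^2/\sigma$ in the quadratic form sense for all $|k|\geq 2$.
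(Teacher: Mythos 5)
Your proof of the first inequality (positivity of $g\widetilde{\mathcal{K}}_k[g\,\cdot\,]$) is correct and is essentially the same argument as the paper's: express $\langle g\widetilde{\mathcal{K}}_k[gw],w\rangle$ via the Green's function and integrate by parts. The issue is the second inequality, $\langle g\widetilde{\mathcal{K}}_k[gw],w\rangle \le \frac{1}{|k|}\int\sigma|w|^2\,dr$, where you take a genuinely different route from the paper and leave a real gap.

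Your strategy (Cauchy--Schwarz with weight $\sigma$ plus a weighted Hardy inequality for $\psi$) reduces the lemma to
\begin{equation*}
|k|\int_0^{\infty}\frac{rg^2}{\sigma}\,\psi^2\,dr \;\le\; \int_0^{\infty}\bigl(|\psi'|^2 + k^2\psi^2/r^2\bigr)\,r\,dr,
\end{equation*}
and as you correctly observe, the pointwise comparison $r^2g^2\le |k|\sigma$ only holds for $|k|\ge 3$ (the ratio peaks near $2.6$). For $|k|=1$ nothing is needed, since $1-\frac{1}{|k|}=0$. But for $|k|=2$ you only sketch a ``refined identity'' obtained by testing against $r\sigma\psi$, and you do not show that the resulting positive terms dominate $|k|\,rg^2/\sigma$ in the quadratic-form sense, nor how to combine the two identities. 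As written, the proposal does not close the case $|k|=2$, which is exactly the hard case of the lemma and a case that is actually used in the paper's resolvent estimates. This is a genuine gap, not a routine verification.

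For comparison, the paper's proof of the upper bound avoids any Hardy-type inequality entirely: it works directly at the kernel level. Using $\min(r/s,s/r)^{|k|}\le\min(r/s,s/r)$ for $|k|\ge1$ and a weighted AM--GM with the specific weight $(r/s)^{3/2}$, one symmetrizes the double integral and reduces it to
\begin{equation*}
\frac{1}{|k|}\int_0^{\infty}\widetilde{\mathcal{K}}_1\bigl[r^{3/2}g^2\bigr](s)\,\frac{|w(s)|^2}{s^{3/2}}\,ds,
\end{equation*}
and the exact identity $\widetilde{\mathcal{K}}_1[r^{3/2}g^2](s)=s^{3/2}\sigma(s)$ gives $\frac{1}{|k|}\int\sigma|w|^2\,dr$ at once, uniformly in $|k|\ge1$ and with no case splitting. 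The key ingredient is the choice of exponent $3/2$ in the AM--GM, chosen precisely so that the one-dimensional Bessel Green's kernel applied to $r^{3/2}g^2$ reproduces $\sigma$; this is the algebraic fact your approach is missing, and it is what makes the paper's constant sharp down to $|k|=2$. If you want to pursue your route, you would still need to prove the $|k|=2$ Hardy bound; otherwise I would recommend switching to the kernel-level argument.
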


\begin{proof}
Let us first prove that the operator $g\widetilde{\mathcal{K}}_{k}[g\cdot]$ is nonnegative. For this, we write
\begin{align*}
\widetilde{\mathcal{K}}_{k}[w](r)&=\f{1}{2|k|}\int_{0}^{+\infty}\min(\f{r}{s},\f{s}{r})^{|k|}(rs)^{\f12}w(s)ds\\
&=\f{1}{2|k|}\int_{0}^{r}r^{\f12-|k|}s^{\f12+|k|}w(s)ds+\f{1}{2|k|}\int_{r}^{+\infty}r^{\f12+|k|}s^{\f12-|k|}w(s)ds.
\end{align*}
Then we find that
\begin{align*} &(\widetilde{\mathcal{K}}_{k}[w](r))'=\f{\f12-|k|}{2|k|}\int_{0}^{r}(\f{s}{r})^{|k|+\f12}w(s)ds+\f{\f12+|k|}{2|k|}\int_{r}^{+\infty}(\f{r}{s})^{|k|-\f12}w(s)ds,\\
&(\widetilde{\mathcal{K}}_{k}[w](r))''=\f{k^2-\f14}{r^2}\tilde{\mathcal{K}}_k[w](r)-w(r).
\end{align*}
In particular, we find that
\ben
\big(-\partial_r^2+\f{k^2-\f14}{r^2}\big)\widetilde{\mathcal{K}}_k[w](r)=w(r).\label{eq:K-eigen}
\een
Using the following pointwise estimates of $\widetilde{\mathcal{K}}_{k}[w](r)$
\begin{align*}
&|\widetilde{\mathcal{K}}_{k}[w](r)|\leq\f{1}{2|k|}\int_{0}^{+\infty}\min(r,s)|w(s)|ds\leq\f{1}{2|k|}\min\big(r\|w\|_{L^1},\|rw \|_{L^1}\big),\\
&|\partial_r(\widetilde{\mathcal{K}}_{k}[w](r))|\leq\int_{0}^{+\infty}\f{\min(r,s)}{r}|w(s)|ds\leq\min\big(\|w\|_{L^1},\f{1}{r}\|rw\|_{L^1}),
\end{align*}
we infer that
\begin{align*}
\widetilde{\mathcal{K}}_k[w](\widetilde{\mathcal{K}}_k[w])'\Big|_{r=0,+\infty}=0.
\end{align*}
Then we get by using \eqref{eq:K-eigen} and integration by parts that
\begin{align*}
\langle g\widetilde{\mathcal{K}}_{k}[gw],w\rangle&=\langle\widetilde{\mathcal{K}}_{k}[gw],gw\rangle=\langle \widetilde{\mathcal{K}}_{k}[gw],(-\partial_r^2+\f{k^2-\f14}{r^2})\widetilde{\mathcal{K}}_k[gw]\rangle\\
&=\|\partial_r(\widetilde{\mathcal{K}}_k[gw])\|^2+(k^2-\f14)\|\f{\widetilde{\mathcal{K}}_k[gw]}{r}\|^2\geq0.
\end{align*}

Next we give a upper bound for $g\widetilde{\mathcal{K}}_{k}[g\cdot]$.
\begin{align*}
\Big|\int_{0}^{+\infty}g\widetilde{\mathcal{K}}_k[gw]\overline{w(r)} dr\Big|&\leq\f{1}{2|k|}\int_{0}^{+\infty}\int_{0}^{+\infty}\min(\f{r}{s},\f{s}{r})^{|k|}(rs)^{\f12}|g(r)w(s)||g(s)w(r)|dsdr\\
&\leq\f{1}{4|k|}\int_0^{+\infty}\int_{0}^{+\infty}\min(\f{r}{s},\f{s}{r})(rs)^{\f12}[(\f{r}{s})^{\f32}g^2(r)|w(s)|^2+(\f{s}{r})^{\f32}g^2(s)|w(r)|^2]dsdr\\
&=\f{1}{|k|}\int_0^{+\infty}\widetilde{\mathcal{K}}_1[r^{\f32}g^2(r)](s)\f{|w(s)|^2}{s^{\f32}}ds=\f{1}{|k|}\int_{0}^{+\infty}\sigma(s)|w(s)|^2ds,
\end{align*}
which gives
\begin{align*}
0\leq g\widetilde{\mathcal{K}}_{k}[g\cdot]\leq\f{1}{|k|}\sigma(r).
\end{align*}
As a consequence, we deduce that
\begin{align*}
&\langle(1-\widetilde{B}_k)w,w\rangle=\langle(1-\sigma)w+g\widetilde{\mathcal{K}}_{k}[gw],w\rangle\geq\langle(1-\sigma)w,w\rangle,\\
&\langle\widetilde{B}_kw,w\rangle=\langle\sigma w,w\rangle-\langle g\tilde{\mathcal{K}}_{k}[gw],w\rangle\geq(1-\f{1}{|k|})\langle\sigma w,w\rangle.
\end{align*}
The proof is completed.
\end{proof}

The following lemma gives a sharper lower bound of $\widetilde{A}_k$ than Lemma \ref{lem:coer-A1}.

\begin{lemma}\label{lem:coer-Ak}
Let $k\ge 2$. Then for any $w\in D$, we have
\begin{align*}
\langle\widetilde{A}_kw,w\rangle\gtrsim \big\langle(\f{k^2}{r^2}+r^2)w,w\big\rangle.
\end{align*}
\end{lemma}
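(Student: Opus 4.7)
My plan is to bound the potential part of $\widetilde{A}_k$ pointwise from below by a small constant multiple of $k^2/r^2 + r^2$, absorb the $-\tfrac12$ shift using AM--GM, and discard the kinetic term $-\partial_r^2$ via its nonnegativity as a quadratic form (integration by parts on the core $C_c^\infty(\mathbb{R}_+) \subset D$, then extending by density). This is more elementary than the trick with $F = r^{3/2}g$ used in Lemma \ref{lem:coer-A1}, because once $k \geq 2$ the centrifugal barrier is large enough to overpower the constant shift directly.

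Concretely, I will fix a small $c_0 > 0$ and split
\[
\widetilde{A}_k = -\partial_r^2 + c_0\Big(\frac{k^2}{r^2} + r^2\Big) + V_k(r), \qquad V_k(r) := \frac{(1-c_0)k^2 - \tfrac{1}{4}}{r^2} + \Big(\tfrac{1}{16} - c_0\Big)\, r^2 - \tfrac{1}{2}.
\]
The task reduces to verifying $V_k(r) \geq 0$ pointwise for all $r > 0$ and all $k \geq 2$. With $c_0 = \tfrac{1}{32}$ one checks $(1-c_0)k^2 - \tfrac{1}{4} \geq \tfrac{29}{8}$ for $k \geq 2$, and AM--GM applied to the two positive summands of $V_k$ gives
\[
V_k(r) \geq 2\sqrt{\tfrac{29/8}{32}} - \tfrac{1}{2} = \tfrac{\sqrt{29}}{8} - \tfrac{1}{2} > 0.
\]
Combined with $\langle -\partial_r^2 w, w\rangle = \|\partial_r w\|^2 \geq 0$ (boundary terms vanish on $C_c^\infty(\mathbb{R}_+)$, and the inequality extends to $w \in D$ by density), this yields $\langle \widetilde{A}_k w, w\rangle \geq c_0 \langle (k^2/r^2 + r^2) w, w\rangle$, as required.

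The only real obstacle is calibrating $c_0$: the AM--GM bound is essentially tight at $k = 2$, where $\sqrt{29}/8 \approx 0.673$ only just exceeds $\tfrac12$; a more generous choice such as $c_0 = \tfrac{1}{16}$ would fail because it kills the $r^2$ coefficient. For $|k| \geq 3$ the analogous computation has substantial slack. It is worth noting that this pointwise-potential argument genuinely requires $k \geq 2$: at $|k| = 1$ the corresponding $V_1$ is not pointwise nonnegative, which is consistent with the presence of $\operatorname{Ker}(\widetilde{B}_1) \neq \{0\}$ and the fact that the analogous lower bound in Lemma \ref{lem:coer-A1+f} had to be proved by adding the auxiliary potential $f(r)$ coming from the commutator $[T, \widetilde{A}_1]$.
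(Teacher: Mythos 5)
Your proposal is correct and follows essentially the same route as the paper: both drop the kinetic term $-\partial_r^2$ as a nonnegative quadratic form, peel off a small multiple of $k^2/r^2 + r^2$ from the potential, and verify the remainder is pointwise nonnegative by AM--GM (the paper uses a decomposition with coefficients $\tfrac23$, $\tfrac{3}{32}$ and keeps a $k$-dependent residual, while you fix $c_0 = \tfrac{1}{32}$ up front; the arithmetic differs slightly but the mechanism is identical). Your closing remark that the pointwise-potential bound genuinely fails for $|k| = 1$ is a correct and useful observation.
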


\begin{proof}
For $|k|\geq2$, we have
\begin{align*}
\widetilde{A}_k&\geq\f{k^2-\f14}{r^2}+\f{r^2}{16}-\f12\\
&=\Big(\f23\cdot\f{k^2-\f14}{r^2}+{\f{3}{32}}\cdot\f{r^2}{k^2-\f14}-\f12\Big)+\f13\cdot \f{k^2-\f14}{r^2}+\Big(\f{1}{16}-{\f{3}{32}}\cdot\f{1}{k^2-\f14}\Big)r^2\\
&\geq \f13\cdot \f{k^2-\f14}{r^2}+(\f{1}{16}-{\f{3}{32}}\cdot\f{1}{k^2-\f14})r^2\sim \f{k^2}{r^2}+r^2,
\end{align*}
which gives our result.
\end{proof}

\subsection{Resolvent estimate for $\nu_k\ge 1$ or $\nu_k\le 0$}

In this subsection, we prove Theorem \ref{thm:Hk} for the case of $\nu_k\ge 1$ or $\nu_k\le 0$.\smallskip

First of all, for $\nu_k\ge 1$, we infer from Lemma \ref{lem:HktoLk} that
\begin{align*}
|\langle\widetilde{\mathcal{H}}_kw,w\rangle|&\sim\langle\widetilde{A}_kw,w\rangle+|\beta_k|\langle(\nu_k-\widetilde{B}_k)w,w\rangle\\
&\geq \langle\widetilde{A}_kw,w\rangle+|\beta_k|\langle(1-\widetilde{B}_k)w,w\rangle\\
&\gtrsim \big\langle(\f{1}{r^2}+r^2)w,w\big\rangle+|\beta_k|\langle(1-\sigma(r))w,w\rangle.
\end{align*}
Thanks to $1-\sigma(r)=1-\f{1-e^{-\f{r^2}{4}}}{r^2/4}\sim r^2(r\to0)$ and $\lim\limits_{r\to\infty}1-\sigma(r)=1$, we get
\begin{align*}
&\int_0^1\big[\f{1}{r^2}+|\beta_k|(1-\sigma(r))\big]|w(r)|^2dr\gtrsim\int_0^1 (\f{1}{r^2}+|\beta_k|r^2)|w(r)|^2dr\gtrsim \int_0^1|\beta_k|^{\f12}|w(r)|^2dr,\\
&\int_1^{+\infty}\big[\f{1}{r^2}+r^2+|\beta_k|(1-\sigma(r))\big]|w(r)|^2dr\gtrsim\int_{1}^{+\infty}(1+|\beta_k|)|w(r)|^2dr\gtrsim \int_1^{+\infty}|\beta_k|^{\f12}|w(r)|^2dr,
\end{align*}
which yield that for $\nu_k\geq1$,
\begin{align}
|\langle\widetilde{\mathcal{H}}_kw,w\rangle|\gtrsim \big\langle(\f{1}{r^2}+r^2)w,w\big\rangle+|\beta_k|\langle(1-\sigma(r))w,w\rangle\gtrsim|\beta_k|^{\f12}\|w\|^2
\end{align}

For $\nu_k\leq0$, we infer from Lemma \ref{lem:HktoLk} that
\begin{align*}
|\langle\widetilde{\mathcal{H}}_kw,w\rangle|&\sim\langle\widetilde{A}_kw,w\rangle+|\beta_k|\langle(\widetilde{B}_k-\nu_k)w,w\rangle\\
&\gtrsim \langle\widetilde{A}_kw,w\rangle+|\beta_k|\langle\widetilde{B}_kw,w\rangle\\
&\gtrsim \big\langle(\f{1}{r^2}+r^2)w,w\big\rangle+|\beta_k|\langle\sigma(r)w,w\rangle.
\end{align*}
Thanks to $\lim\limits_{r\to0}\sigma(r)=1$ and $\sigma(r)\sim\f{1}{r^2}(r\to\infty)$, we get
\begin{align*}
&\int_0^1 \big[\f{1}{r^2}+r^2+|\beta_k|\sigma(r)\big]|w(r)|^2dr\gtrsim\int_0^1(1+|\beta_k|)|w(r)|^2dr\gtrsim \int_0^1|\beta_k|^{\f12}|w(r)|^2dr,\\
&\int_1^{+\infty}(r^2+|\beta_k|\sigma(r))|w(r)|^2dr\gtrsim\int_{1}^{+\infty}(r^2+\f{1}{r^2}|\beta_k|)|w(r)|^2dr\gtrsim \int_1^{+\infty}|\beta_k|^{\f12}|w(r)|^2dr,
\end{align*}
which show that for $\nu_k\leq0$
\begin{align}
|\langle\widetilde{\mathcal{H}}_kw,w\rangle|\gtrsim \big\langle(\f{1}{r^2}+r^2)w,w\big\rangle+|\beta_k|\langle\sigma(r)w,w\rangle\gtrsim|\beta_k|^{\f12}\|w\|^2.
\end{align}

\subsection{Resolvent estimate for $0<\nu_k<1$}

In this subsection, we prove Theorem \ref{thm:Hk} for the case of $0<\nu_k<1$.\smallskip

Let $\nu_k=\sigma(r_k)$ for some $r_k>0$. We again divide the proof into two cases:
\begin{align*}
|\beta_k|\leq\max\big(\f{|k|^3}{r_k^4},|k|^3,r_k^6\big)\quad\text{and}\quad |\beta_k|\geq\max\big(\f{|k|^3}{r_k^4},|k|^3,r_k^6\big).
\end{align*}

\subsubsection{Case 1. $|\beta_k|\leq\max\big(\f{|k|^3}{r_k^4},|k|^3,r_k^6\big)$.}

By Lemma \ref{lem:HktoLk}, it suffices to prove the following lemma.

\begin{lemma}
If $|\beta_k|\leq\max\big(\f{|k|^3}{r_k^4},|k|^3,r_k^6\big)$, then we have
\begin{align*}
\|{\widetilde{\mathcal{H}}_k}w\|\gtrsim |\beta_k|^{\f13}\|w\|.
\end{align*}
\end{lemma}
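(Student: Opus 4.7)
The plan is to pair $\widetilde{\mathcal{H}}_k w$ with $w$ and extract the real and imaginary parts. By Lemma \ref{lem:coer-Ak} the real part satisfies
\begin{align*}
\mathrm{Re}\,\langle\widetilde{\mathcal{H}}_k w, w\rangle = \langle\widetilde{A}_k w, w\rangle \gtrsim \big\langle(k^2/r^2+r^2)w, w\big\rangle \gtrsim |k|\,\|w\|^2,
\end{align*}
using the pointwise AM--GM bound $k^2/r^2+r^2\geq 2|k|$. Therefore $\|\widetilde{\mathcal{H}}_k w\|\geq |k|\|w\|\geq |\beta_k|^{1/3}\|w\|$ whenever $|\beta_k|\leq C|k|^3$. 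This settles the portion of the hypothesis in which $\max(|k|^3/r_k^4, |k|^3, r_k^6) = |k|^3$, and reduces the remaining work to exactly one of two sub-cases: (A) $r_k\leq 1$ with $|k|^3\leq|\beta_k|\leq |k|^3/r_k^4$, yielding $|\beta_k|^{1/3}\leq |k|/r_k^{4/3}$; or (B) $r_k\geq |k|^{1/2}$ with $|k|^3\leq|\beta_k|\leq r_k^6$, yielding $|\beta_k|^{1/3}\leq r_k^2$.

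In both sub-cases I would harvest the imaginary part $\mathrm{Im}\,\langle\widetilde{\mathcal{H}}_k w, w\rangle = \beta_k\langle(\widetilde{B}_k-\nu_k)w, w\rangle$. The pointwise bound $0\leq g\widetilde{\mathcal{K}}_k[g\cdot]\leq \sigma/|k|$ produced inside the proof of Lemma \ref{lem:HktoLk} gives the operator sandwich $(1-1/|k|)\sigma\leq \widetilde{B}_k\leq \sigma$; hence for $|k|\geq 2$ the nonlocal term is an $1/|k|$-relative perturbation of $\sigma$, absorbable into the $|\beta_k|\langle\sigma w, w\rangle$-contribution also supplied by Lemma \ref{lem:HktoLk}. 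With this reduction, $\widetilde{B}_k-\nu_k$ plays exactly the role of the local multiplier $\sigma(r)-\nu_k$, so the analysis collapses to the same local-multiplier problem treated in Case~3 of the proof of Theorem \ref{thm:H1} (the content of Lemma~\ref{lem:beta-med} there), now with the enhanced coercivity $\widetilde{A}_k\gtrsim k^2/r^2+r^2$ in place of $1/r^2+r^2$ and $k$-weighted thresholds.

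Concretely, in (A) the inequality $k^2/r_k^2\gtrsim |\beta_k|^{1/3}$ (which follows from $|\beta_k|^{1/3}\leq |k|/r_k^{4/3}$ together with $r_k^{2/3}\leq 1\leq |k|/2$) makes the coercive contribution alone already sufficient on a neighborhood of $r_k$, while on large $r$ the quantity $\nu_k-\sigma(r)$ is bounded below by a positive constant, so $|\beta_k|$ itself takes over. In (B), $r_k^2\gtrsim |\beta_k|^{1/3}$ makes the coercive contribution $r^2$ sufficient on $r\sim r_k$ and beyond, while on small $r$ one exploits $\sigma(r)\gtrsim 1\gg\nu_k$ together with the imaginary-part lower bound $|\beta_k|\sigma$ from Lemma \ref{lem:HktoLk}. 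The main obstacle is the non-locality of $\widetilde{B}_k$, which would normally block any region-by-region cutoff argument; this is precisely where the assumption $|k|\geq 2$ enters, via the sharp pointwise bound $g\widetilde{\mathcal{K}}_k[g\cdot]\leq \sigma/|k|$ that reduces the non-local kernel to an absorbable correction of the local multiplier $\sigma$ and lets the $k=1$ template go through verbatim.
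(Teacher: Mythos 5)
Your proposal follows the same route as the paper: handle $|\beta_k|\le |k|^3$ by the coercivity $\widetilde A_k\gtrsim k^2/r^2+r^2$ from Lemma~\ref{lem:coer-Ak}, and handle the remaining range via the imaginary part together with the sandwich $(1-\tfrac1{|k|})\sigma\le\widetilde B_k\le\sigma$ from Lemma~\ref{lem:HktoLk} and a pointwise multiplier estimate ($k$-weighted version of Lemma~\ref{lem:beta-med}, namely Lemma~\ref{lem:beta-high}). The case split into $r_k\le 1$ and $r_k\ge\sqrt k$ also matches (the paper further splits $r_k\ge\sqrt k$ into $|\beta_k|\le r_k^4$ and $r_k^4\le|\beta_k|\le r_k^6$, but that is only to record the sharper rate $|\beta_k|^{1/2}$ in the former range).

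One step, however, is stated too strongly and, taken literally, does not work: the claim that after the sandwich, ``$\widetilde B_k-\nu_k$ plays \emph{exactly} the role of the local multiplier $\sigma(r)-\nu_k$, so the $k=1$ template goes through verbatim.'' The bound $0\le g\widetilde{\mathcal K}_k[g\cdot]\le\sigma/|k|$ is a quadratic-form (operator) inequality, not a pointwise one, so $\widetilde B_k-\nu_k$ is not itself a local multiplier and its quadratic form need not be comparable to $\langle(\sigma-\nu_k)w,w\rangle$: the latter can be positive while the former vanishes. What the sandwich actually gives is the one-sided lower bound
\begin{align*}
|\langle(\widetilde B_k-\nu_k)w,w\rangle|\ \geq\ \max\Big(\big\langle(\nu_k-\sigma)w,w\big\rangle,\ \big\langle(\tfrac12\sigma-\nu_k)w,w\big\rangle,\ 0\Big),
\end{align*}
where the factor $\tfrac12$ comes from $(1-\tfrac1{|k|})\ge\tfrac12$, and each argument of the max is an integral against a fixed local multiplier. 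The pointwise lemma therefore has to be proved for the pair $\nu_k-\sigma$ (used when $r_k\le 1$) and $\tfrac12\sigma-\nu_k$ (used when $r_k\ge\sqrt k$), \emph{not} for $\sigma-\nu_k$; this is precisely the modification the paper makes in Lemma~\ref{lem:beta-high}. The $\tfrac12$ degradation and the $\sqrt k$ threshold are genuine changes from the $k=1$ case, not cosmetic relabelings, so ``verbatim'' overstates the reduction. With that correction your outline agrees with the paper's proof.
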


\begin{proof}
If $|\beta_k|\leq |k|^3$, then $\f{k^2}{r^2}+r^2\geq |k|\geq |\beta_k|^{\f13}$. Thus,
\begin{align*}
|\langle\widetilde{\mathcal{H}}_kw,w\rangle|\gtrsim \big\langle(\f{k^2}{r^2}+r^2)w,w\big\rangle\geq |\beta_k|^{\f13}\|w\|^2.
\end{align*}
If $k^3\leq|\beta_k|\leq\max(\f{|k|^3}{r_k^4},r^6_k)$, we only need to check the following cases
\begin{align*}
&r_k\leq 1,\quad |k|^3\leq|\beta_k|\leq\f{|k|^3}{r_k^4}\Longrightarrow\|\widetilde{\mathcal{H}}_kw\|\gtrsim|\beta_k|^{\f12}\|w\|,\\
&r_k\geq\sqrt{k},\quad|k|^3\leq|\beta_k|\leq r_k^4\Longrightarrow\|\widetilde{\mathcal{H}}_kw\|\gtrsim|\beta_k|^{\f12}\|w\|,\\
&r_k\geq\sqrt{k},\quad r_k^4\leq|\beta_k|\leq r_k^6\Longrightarrow \|\widetilde{\mathcal{H}}_kw\|\gtrsim|\beta_k|^{\f13}\|w\|,
\end{align*}
which can be deduced from the following fact
\begin{align*}
 |\langle\widetilde{\mathcal{H}}_kw,w\rangle|\gtrsim \big\langle(r^2+\f{k^2}{r^2})w,w\big\rangle+|\beta_k| {\max(\langle(\nu_k-\sigma(r))w,w\rangle,\langle(\sigma(r)/2-\nu_k)w,w\rangle,0)}
\end{align*}
and Lemma \ref{lem:beta-high}.
\end{proof}

\subsubsection{Case 2. $|\beta_k|\geq\max(\f{|k|^3}{r_k^4},|k|^3,r_k^6)$.}

Let us introduce the operator
\begin{align}
\widetilde{\mathcal{K}}_k^{(r_k)}[f](r)=\int_{0}^{r_k}\widetilde{K}_k^{(r_k)}(r,s)f(s)ds,
\end{align}
where for $0\le r,s\le r_k$,
\beno
\widetilde{K}_k^{(r_k)}(r,s)=\f{1}{2|k|}\min(\f{r}{s},\f{s}{r})^{|k|}(rs)^{\f12}-\f{1}{2|k|}(\f{rs}{r^2_k})^{|k|}(rs)^{\f12}\ge 0.
\eeno
Let $u(r)=\widetilde{\mathcal{K}}_k^{(r_k)}[w](r)$. Then $u\in H^1_0(0,r_k)$ is the unique solution to
\begin{align*}
\Big(-\partial_r^2+\f{k^2-\f14}{r^2}\Big)u=w\quad \text{in}\quad (0,r_k).
\end{align*}

\begin{lemma}\label{lem:K}
It holds that
\begin{align*}
{\rm{Re}}\int_{0}^{r_k}g(r)\widetilde{\mathcal{K}}_k^{(r_k)}[gw](r)\bar{f}(r)dr\leq\f{2}{|k|+1}\int_{0}^{+\infty}(\sigma(s)-\nu_k)|w(s)|^2ds.
\end{align*}
\end{lemma}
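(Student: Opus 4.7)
The plan is to exploit that $\widetilde{\mathcal{K}}_k^{(r_k)}$ is the Green's function for the Dirichlet problem of $-\partial_r^2+\tfrac{k^2-1/4}{r^2}$ on $(0,r_k)$, rewrite the bilinear form on the left as a Dirichlet energy, symmetrize via a Schur-type bound to reach a pointwise inequality in $r$, and close by a comparison against $\sigma-\nu_k$. (I read the displayed statement with $\bar w$ in place of $\bar f$, since the right-hand side is a quadratic form in $w$ alone and is nonnegative only on the region where $\sigma\ge \nu_k$.) Setting $u:=\widetilde{\mathcal{K}}_k^{(r_k)}[gw]$, the eigenrelation \eqref{eq:K-eigen} combined with the homogeneous subtraction built into $\widetilde{K}_k^{(r_k)}$ shows that $u\in H_0^1(0,r_k)$ is the unique solution of
\begin{equation*}
-u''+\tfrac{k^2-1/4}{r^2}u=gw\text{ on }(0,r_k),\qquad u(0)=u(r_k)=0,
\end{equation*}
so testing against $\bar u$ and integrating by parts (boundary terms vanish) yields
\begin{equation*}
\mathrm{Re}\int_0^{r_k}g(r)u(r)\bar w(r)\,dr=\int_0^{r_k}|u'|^2\,dr+(k^2-\tfrac14)\int_0^{r_k}|u|^2/r^2\,dr\ge 0.
\end{equation*}

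Next, since $\widetilde{K}_k^{(r_k)}(r,s)$ is real, symmetric and nonnegative, the elementary bound $2\,\mathrm{Re}(w(s)\bar w(r))\le|w(r)|^2+|w(s)|^2$, Fubini, and the symmetry $(r,s)\leftrightarrow(s,r)$ reduce the problem to
\begin{equation*}
\mathrm{Re}\int_0^{r_k}g(r)u(r)\bar w(r)\,dr\le\int_0^{r_k}|w(r)|^2\,g(r)\bigl(\widetilde{\mathcal{K}}_k^{(r_k)}[g]\bigr)(r)\,dr,
\end{equation*}
so the lemma is reduced to the pointwise inequality
\begin{equation*}
g(r)\bigl(\widetilde{\mathcal{K}}_k^{(r_k)}[g]\bigr)(r)\le \tfrac{2}{|k|+1}\bigl(\sigma(r)-\nu_k\bigr)\quad\text{on }(0,r_k).
\end{equation*}
I would prove this by a barrier argument: define $\Phi(r):=\tfrac{2(\sigma(r)-\nu_k)}{(|k|+1)g(r)}$, observe that $\Phi(0)=\tfrac{2(1-\nu_k)}{|k|+1}>0=u(0)$ and $\Phi(r_k)=0=u(r_k)$ so $\Phi\ge u$ at the endpoints, and use $(r^3\sigma')'=-r^3g^2$ (from the proof of Lemma \ref{lem:T-p1}) together with $g'/g=-r/4$ and $g''/g=\tfrac{r^2}{16}-\tfrac12$ to verify the differential inequality
\begin{equation*}
-\Phi''+\tfrac{k^2-1/4}{r^2}\Phi\ge g\quad\text{on }(0,r_k).
\end{equation*}
The maximum principle for $-\partial_r^2+\tfrac{k^2-1/4}{r^2}$ with Dirichlet endpoints at $0$ and $r_k$ then gives $\Phi\ge u$ on $(0,r_k)$, which is the desired inequality.

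The \emph{main obstacle} is the verification of that differential inequality: the natural second-order ODE for $\sigma-\nu_k$ is $\partial_r^2+\tfrac{3}{r}\partial_r$ (the four-dimensional radial Laplacian in disguise, since $(r^3\sigma')'=r^3(\sigma''+\tfrac{3}{r}\sigma')$), whereas $u$ is governed by a one-dimensional Schr\"odinger operator with the $k$-dependent centrifugal potential $\tfrac{k^2-1/4}{r^2}$, so a direct verification requires carefully tracking mixed derivatives of the Gaussian $g$ against $\sigma-\nu_k$ and its derivatives, and explaining why $2/(|k|+1)$ is exactly the constant forced by the algebra. If that direct calculation becomes unwieldy, the fallback is an integral comparison: insert the closed-form Green's function
\begin{equation*}
\widetilde{K}_k^{(r_k)}(r,s)=\tfrac{1}{2|k|}\min(r,s)^{|k|+\f12}\max(r,s)^{\f12-|k|}\bigl[1-(\max(r,s)/r_k)^{2|k|}\bigr]
\end{equation*}
together with the representation $\sigma(r)-\nu_k=\int_r^{r_k}s^{-3}\int_0^s t^3g(t)^2\,dt\,ds$ (a direct integration of $(r^3\sigma')'=-r^3g^2$), and then match the two representations integrand by integrand; the factor $\tfrac{2}{|k|+1}=\int_0^1 2s^{|k|}\,ds$ then arises naturally from the $s^{|k|}$ scaling in the Green's function and a pointwise bound $g(r)g(s)\le 1$.
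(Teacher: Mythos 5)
Your overall strategy (Green's-function interpretation $+$ symmetrization $+$ pointwise comparison against $\sigma-\nu_k$ via the supersolution equation $(r^3\sigma')'=-r^3g^2$) is the same structure the paper uses. But there is a quantitative error at the symmetrization step that makes your reduced pointwise inequality false, so the barrier argument as stated cannot close.

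Concretely, the unweighted bound $2\,\mathrm{Re}(w(s)\bar w(r))\le |w(r)|^2+|w(s)|^2$ reduces the lemma to
\begin{align*}
g(r)\,\widetilde{\mathcal{K}}_k^{(r_k)}[g](r)\le \tfrac{2}{|k|+1}\bigl(\sigma(r)-\nu_k\bigr)\qquad\text{on }(0,r_k),
\end{align*}
and this is already wrong near $r=r_k$ for small $r_k$. Both sides vanish linearly at $r_k$, and matching the slopes: with $u=\widetilde{\mathcal{K}}_k^{(r_k)}[g]$ one has $u'(r_k)=-r_k^{-|k|-1/2}\int_0^{r_k}s^{|k|+1/2}g(s)\,ds$, so for $r_k\to 0$ (where $g\approx 1$) the left side behaves like $\tfrac{r_k(r_k-r)}{|k|+3/2}$ while the right side behaves like $\tfrac{2}{|k|+1}|\sigma'(r_k)|(r_k-r)\approx \tfrac{r_k(r_k-r)}{2(|k|+1)}$, and $\tfrac{1}{|k|+3/2}>\tfrac{1}{2(|k|+1)}$ for every $k$. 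So the constant $\tfrac{2}{|k|+1}$ does not accommodate the unweighted reduction, and the differential inequality you flagged as the ``main obstacle'' genuinely fails.

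What is needed is a \emph{weighted} symmetrization: bound $|g(r)w(s)|\,|g(s)w(r)|$ by $\tfrac12\bigl[(r/s)^{|k|+\f12}g^2(r)|w(s)|^2+(s/r)^{|k|+\f12}g^2(s)|w(r)|^2\bigr]$ (Young's inequality with weight $\mu=(r/s)^{|k|+1/2}$), which by the $r\leftrightarrow s$ symmetry of the kernel reduces the lemma to
\begin{align*}
s^{-(|k|+\f12)}\,\widetilde{\mathcal{K}}_k^{(r_k)}\!\bigl[r^{|k|+\f12}g^2\bigr](s)\le \tfrac{2}{|k|+1}\bigl(\sigma(s)-\nu_k\bigr).
\end{align*}
This one is exactly sharp at $r_k$ (the slopes match), and it follows from the very supersolution computation you had in mind applied to the function $r^{|k|+1/2}(\sigma-\nu_k)$: using $(r^3\sigma')'=-r^3g^2$ and $-r^3\sigma'\ge r^4g^2/4$ one gets $\bigl(-\partial_r^2+\tfrac{k^2-1/4}{r^2}\bigr)\bigl(r^{|k|+1/2}(\sigma-\nu_k)\bigr)\ge \tfrac{|k|+1}{2}r^{|k|+1/2}g^2$, whence positivity of the Dirichlet Green's kernel on $(0,r_k)$ gives the comparison. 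The point you missed is that the weight $r^{|k|+1/2}$ is forced: it is in the kernel of $-\partial_r^2+\tfrac{k^2-1/4}{r^2}$, which is what lets the barrier argument see the four-dimensional radial structure of $\sigma$ cleanly. Your choice of barrier $\Phi=\tfrac{2(\sigma-\nu_k)}{(|k|+1)g}$ lacks this factor, and that is precisely why the differential inequality breaks.
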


\begin{proof}
{As $(r^3\sigma'(r))'=-r^3g(r)^2$  and $g^2$ is decreasing, we have $-r^3\sigma'(r)\geq r^4g^2(r)/4, $ and}
\begin{align*}
\Big(-\partial_r^2+\f{k^2-\f14}{r^2}\Big)\big(r^{|k|+\f12}(\sigma(r)-\nu_k)\big)
&=r^{|k|+\f12}(-\partial_r^2\sigma-(2|k|+1)r^{-1}\partial_r\sigma)\\
&=r^{|k|+\frac{1}{2}}(g^2-(2|k|-2)r^{-1}\partial_r\sigma)\\
&\geq r^{|k|+\frac{1}{2}}(g^2+\f{|k|-1}{2}g^2)\\
&=r^{|k|+\frac{1}{2}} \f{|k|+1}{2}g^2,
\end{align*}
which implies that
\begin{align*}
r^{|k|+\frac{1}{2}}(\sigma-\nu_k)&=\widetilde{\mathcal{K}}_k^{(r_k)}\big[r^{|k|+\frac{1}{2}}(g^2-(2|k|-2)\f{\partial_r\sigma}{r})\big]\\
&\geq \f{|k|+1}{2}\widetilde{\mathcal{K}}_k^{(r_k)}\big[r^{|k|+\frac{1}{2}}g^2\big].
\end{align*}
Therefore, we obtain
\begin{align*}
&{\rm Re}\int_0^{r_k}g\widetilde{\mathcal{K}}_k^{(r_k)}[gw](r)\overline{w}(r)dr\\
&\leq\int_0^{r_k}\int_0^{r_k}
\widetilde{K}_k^{(r_k)}(r,s)|g(r)w(s)||g(s)w(r)|dsdr\\
&\leq\frac{1}{2}\int_0^{r_k}\int_0^{r_k}\widetilde{K}_k^{(r_k)}(r,s)\left(\left(\frac{r}{s}\right)^{|k|+\frac{1}{2}}g^2(r)|w(s)|^2+\left(\frac{s}{r}\right)^{|k|+\frac{1}{2}}g^2(s)|f(r)|^2\right)dsdr\\
&=\int_0^{r_k}\widetilde{\mathcal{K}}_k^{(r_k)}[r^{|k|+\frac{1}{2}}g^2](s)\frac{|w(s)|^2}{s^{|k|+\frac{1}{2}}}ds\leq\frac{2}{|k|+1}\int_0^{r_k}(\sigma(s)-\nu_k){|w(s)|^2}ds.
\end{align*}
This completes the proof.
\end{proof}

To proceed, we introduce the following decomposition: let $\psi=\widetilde{\mathcal{K}}_k[gw]$ and decompose
\ben\label{eq:psi-dec}
\psi(r)=\psi_1(r)+\psi_2(r),
\een
where $\psi_2(r)$ is given by
\begin{align*}
\psi_2(r)=\left\{
\begin{aligned}
&(\f{r}{r_k})^{|k|+\f12}\psi(r_k),\quad 0< r< r_k,\\
&(\f{r}{r_k})^{-|k|+\f12}\psi(r_k),\quad r>r_k.
\end{aligned}
\right.
\end{align*}
Then we find that $\psi_1(r)\in H^1_0(0,r_k)$ and solves
\begin{align}
\Big(-\partial_r^2+\f{k^2-\f14}{r^2}\Big)\psi_1=gw,\quad r\in\mathbb{R}_+\backslash \{r_k\}.\label{eq:psi-ODE}
\end{align}
Thus, $\psi_1(r)=\widetilde{\mathcal{K}}_k^{(r_k)}[gw](r)$ in $(0,r_k)$.

Let $\delta>0$ be such that
\begin{align}
\delta^3|\beta_k|\min(r_k, r_k^{-3})=1.\nonumber
\end{align}
Due to $|\beta_k|\geq\max(\f{k^3}{r_k^4}, |k|^3, r_k^6)$, we have
\begin{align}
0<\delta\leq\min(\f{r_k}{|k|}, \f{1}{r_k}),\label{p.2}
\end{align}
Due to $|\sigma'(r)|\sim\min(r,\f{1}{r^3})$, we also have
\begin{align}
|\beta_k\delta^3\sigma'(r_k)|\sim1.\label{eq:delta-sigma}
\end{align}
We denote
\begin{align*}
\cE(w)=&\frac{\|w'\|\|w\|_{L^{\infty}}}{|\b_k\d^{\frac{3}{2}}\sigma'(r_k)|}+\frac{\|\widetilde{\mathcal{H}}_kw\|\|w\|}{|\b_k\d\sigma'(r_k)|}+
\frac{\|w'\overline{w}\|_{L^{\infty}(r_k-\d,r_k+\d)}}{|\b_k\d\sigma'(r_k)|}+\d\|w\|_{L^{\infty}}^2\\ &+\frac{|\psi(r_k){J}(r_k)|}{\min(1,r_k^2)}+\frac{g(r_k)^2|\psi(r_k)|^2}{\d|\sigma'(r_k)|^2}+\frac{|\psi(r_k)|^2}{r_k^5+1}=\cE_1(w)+\cdots+\cE_7(w),
\end{align*}
where
\begin{align*}
J(r)=\int_{0}^{r}(\f{s}{r})^{|k|+\f12}g(s)w(s)ds-\int_{r}^{+\infty}(\f{r}{s})^{|k|-\f12}g(s)w(s)ds.
\end{align*}
It is easy to see that
\begin{align}
\partial_r\psi=-\f{J}{2}+\f{\psi}{4|k|r}.\label{eq:psi-J}
\end{align}

\begin{lemma}\label{lem:w-L2}
It holds that for any $w\in D$,
\begin{align*}
&\|w\|^2\leq C\cE(w).
\end{align*}
\end{lemma}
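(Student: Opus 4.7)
The plan is to adapt the cut-off test-function strategy used for $|k|=1$ in Case 3 of Section 4 to this higher-$k$ setting. Specifically, I would pair $\widetilde{\mathcal{H}}_kw$ with $i\,\text{sgn}(\beta_k)(\chi_{(0,r_-)}-\chi_{(r_+,+\infty)})w$ for points $r_-\in(r_k-\delta,r_k)$ and $r_+\in(r_k,r_k+\delta)$ chosen so that $|w'(r_-)|^2+|w'(r_+)|^2\leq\delta^{-1}\|w'\|^2$. Taking the real part, integration by parts in $-\partial_r^2$ produces boundary values at $r_\pm$ bounded either by $\delta^{-1/2}\|w'\|\|w\|_{L^\infty}$ or by $\|w'\overline{w}\|_{L^\infty(r_k-\delta,r_k+\delta)}$; the real potential $\f{k^2-1/4}{r^2}+\f{r^2}{16}-\f12$ drops out of the real part; and the local imaginary piece $i\beta_k(\sigma-\nu_k)$ yields a coercive contribution which, via Taylor expansion of $\sigma$ around $r_k$ together with Lemma \ref{lem:sigma}, is bounded below by $|\beta_k\delta\sigma'(r_k)|\,\|w\|_{L^2(\mathbb{R}_+\setminus(r_k-\delta,r_k+\delta))}^2$.

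The main obstacle is the remaining nonlocal contribution coming from $-i\beta_k g\widetilde{\mathcal{K}}_k[g\cdot]$, which I would handle using the decomposition $\psi:=\widetilde{\mathcal{K}}_k[gw]=\psi_1+\psi_2$ from \eqref{eq:psi-dec}. On $(0,r_k)$ the restriction of $\psi_1$ coincides with $\widetilde{\mathcal{K}}_k^{(r_k)}[gw]$, so Lemma \ref{lem:K} bounds $\text{Re}\int_0^{r_k}g\psi_1\overline{w}\,dr$ by $\f{2}{|k|+1}\int_0^{r_k}(\sigma-\nu_k)|w|^2\,ds$; since $|k|\geq 2$, this is absorbed into (at most) half of the coercive term of the previous step. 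The remainder $\psi_2$ has the explicit radial profile $(r/r_k)^{\pm|k|+1/2}\psi(r_k)$, and pairing it against $gw$ on $(0,r_-)\cup(r_+,+\infty)$ gives contributions of the form $|\psi(r_k)|$ times weighted integrals of $gw$. Rewriting these weighted integrals in terms of $J$ via \eqref{eq:psi-J}, and then applying Cauchy--Schwarz together with the asymptotics $|\sigma'(r)|\sim\min(r,r^{-3})$ and $g(r)=e^{-r^2/8}$, produces exactly the three boundary-type terms $\cE_5(w)$, $\cE_6(w)$, $\cE_7(w)$; the dichotomy $r_k\leq 1$ versus $r_k\geq 1$ is what generates the weights $\min(1,r_k^2)$ and $r_k^5+1$ appearing in them.

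On the narrow window $(r_k-\delta,r_k+\delta)$ the test-function argument gives no information, so I would bound $\|w\|^2_{L^2(r_k-\delta,r_k+\delta)}\leq 2\delta\|w\|_{L^\infty}^2=2\cE_4(w)$ trivially. Collecting all contributions, moving the nonnegative coercive local term to the left-hand side, dividing by $|\beta_k\delta\sigma'(r_k)|$, and using the calibration \eqref{eq:delta-sigma}, namely $|\beta_k\delta^3\sigma'(r_k)|\sim 1$, the boundary terms from $-\partial_r^2$ become $\cE_1(w)$ and $\cE_3(w)$, the inner product with $\widetilde{\mathcal{H}}_kw$ produces $\cE_2(w)$, and the nonlocal remainder contributes $\cE_5(w)+\cE_6(w)+\cE_7(w)$, yielding $\|w\|^2\lesssim\cE(w)$ as claimed.
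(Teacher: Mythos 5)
Your proposal carries over only one of the paper's two multiplier estimates, and the single unweighted multiplier is not strong enough to produce $\cE_5(w)$ with the correct weight. Pairing $\widetilde{\cH}_kw$ with $i\,\text{sgn}(\beta_k)(\chi_{(0,r_-)}-\chi_{(r_+,+\infty)})w$ gives the degenerate coercive term $|\beta_k|\int|\sigma-\nu_k||w|^2$, which via Lemma \ref{lem:sigma} is bounded below by $|\beta_k\delta\sigma'(r_k)|\,\|w\|^2_{L^2(\R_+\setminus B(r_k,\delta))}$, while the $\psi_2$ contribution is roughly $|\beta_k|\,|\psi(r_k)J(r_k)|$ (plus window corrections). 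Dividing through by $|\beta_k\delta\sigma'(r_k)|$ leaves $\f{|\psi(r_k)J(r_k)|}{|\delta\sigma'(r_k)|}$, which exceeds $\cE_5(w)=\f{|\psi(r_k)J(r_k)|}{\min(1,r_k^2)}$ by the factor $\f{\min(1,r_k^2)}{|\delta\sigma'(r_k)|}\gtrsim\f{r_k}{\delta}\cdot\min(1,r_k^{-2})>1$, since $0<\delta\leq\min(r_k/|k|,1/r_k)<r_k$. This overshoot cannot be absorbed: with $|J(r_k)|\lesssim r_k\|w\|_{L^\infty}$ and Cauchy--Schwarz one would need $r_k\lesssim\delta$ to push the leftover into $\cE_4(w)+\cE_6(w)$, contradicting $\delta<r_k$. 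There is also a smaller mismatch: your pairing is split at $r_\pm$, but Lemma \ref{lem:K} is applied on $(0,r_k)$, so the $\psi_1$ absorption leaves an unbounded piece on $(r_-,r_k)$ where your coercive term vanishes.

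The paper avoids both problems by deploying \emph{two} separate multipliers. The unweighted one, split exactly at $r_k$ (not at $r_\pm$), is used solely to bound the Dirichlet energy $A_1=\int(|\partial_r\psi_1|^2+(k^2-\f14)r^{-2}|\psi_1|^2)dr$ via Lemma \ref{lem:K} and the energy identity for $\psi_1$. The actual lower bound for $\|w\|^2$ comes from a second, \emph{weighted} multiplier $i\,\text{sgn}(\beta_k)\f{\chi_{\R_+\setminus B(r_k,\delta)}}{\sigma-\nu_k}w$: the weight cancels the degeneracy of $\sigma-\nu_k$, producing the strong coercive term $|\beta_k|\,\|w\|^2_{L^2(\R_+\setminus B(r_k,\delta))}$, and Cauchy--Schwarz converts the nonlocal piece into $\int\f{g^2|\psi|^2}{(\sigma-\nu_k)^2}$, which is then reduced to $A_1$ by Hardy's inequality using $\psi_1(r_k)=0$ — this is precisely where the $\min(1,r_k^2)$ denominator in $\cE_5$ originates. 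Neither the weighted multiplier nor the Hardy step appears in your argument, and without them the resulting bound is genuinely too weak. The $|k|=1$ strategy you invoke works there only because the wave operator $T$ removes the nonlocal term entirely (reducing to $\cL_1$); for $|k|\geq 2$ the nonlocal term persists, and the two-multiplier scheme is essential.
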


\begin{proof}Recall that
\begin{align*}
 \widetilde{\mathcal{H}}_kw=-\partial_r^2w+\Big(\dfrac{k^2-1/4}{r^2}+\dfrac{r^2}{16}-\dfrac{1}{2}\Big)w
+i\b_k((\sigma-v_k)w-g\psi).
\end{align*}
Then we get by integration by parts that
\begin{align*}
&{\rm{Re}}\langle \widetilde{\mathcal{H}}_kw,i\text{sgn}(\b_k)(\chi_{(0,r_k)}-\chi_{(r_k,+\infty)})w\rangle\\
&={\rm{Re}}\langle -\partial_r^2\omega+i\b_k((\sigma-\nu_k)w-g\psi),i\text{sgn}(\b_k)(\chi_{(0,r_k)}-\chi_{(r_k,+\infty)})w\rangle\\
&={\rm{Re}}\left(\int_0^{r_k}(-i\text{sgn}(\b_k)|\partial_rw|^2+|\b_k|(\sigma-v_k)|w|^2-|\b_k|g\psi\overline{w})dr+i\text{sgn}(\b_k)\omega'\overline{w}(r_k)\right)\\
&\quad+{\rm{Re}}\left(\int_{r_k}^{+\infty}(i\text{sgn}(\b_k)|\partial_rw|^2+|\b_k|(\nu_k-\sigma)|w|^2+|\b_k|g\psi\overline{w})dr+i\text{sgn}(\b_k)w'\overline{w}(r_k)\right)\\
&\geq|\beta_k|\int_0^{+\infty}|\sigma-\nu_k||\omega|^2dr-|\beta_k|{\rm Re}\left(\int_0^{r_k}g\psi\overline{\omega}dr-\int_{r_k}^{+\infty}g\psi\overline{w}dr\right)
-2|w'\overline{w}(r_k)|.
\end{align*}
Using \eqref{eq:psi-dec}, we write
\begin{align*}
&\int_0^{r_k}g\psi\overline{w}dr-\int_{r_k}^{+\infty}g\psi\overline{w}dr\\
&=\int_0^{r_k}g\psi_1\overline{w}dr-\int_{r_k}^{+\infty}g\psi_1\overline{w}dr +\int_0^{r_k}g\psi_2\overline{w}dr-\int_{r_k}^{+\infty}g\psi_2\overline{w}dr\\
&=\int_0^{r_k}g\psi_1\overline{w}dr-\int_{r_k}^{+\infty}\psi_1(-\partial_r^2+(k^2-1/4)r^{-2})\overline{\psi}_1dr\\
&\quad+\psi(r_k)\int_0^{r_k}\left(\dfrac{r}{r_k}\right)^{|k|+\frac{1}{2}}g\overline{w}dr
-\psi(r_k)\int_{r_k}^{+\infty}\left(\dfrac{r}{r_k}\right)^{-|k|+\frac{1}{2}}g\overline{w}dr\\
=&\int_0^{r_k}g\psi_1\overline{w}dr-\int_{r_k}^{+\infty}\big(|\partial_r\psi_1|^2+(k^2-1/4)r^{-2}|\psi_1|^2\big)dr+\psi(r_k)\overline{J}(r_k).
\end{align*}
By \eqref{eq:psi-ODE}, we have
\begin{align*}
\int_0^{r_k}g\psi_1\overline{w}dr=\int_0^{r_k}\psi_1(-\partial_r^2+\f{k^2-1/4}{r^2})\overline{\psi}_1dr=\int_0^{r_k}\big(|\partial_r\psi_1|^2+\f{k^2-1/4}{r^2}|\psi_1|^2\big)dr.
\end{align*}
We get by Lemma \ref{lem:K} that
\begin{align*}
&\int_0^{r_k}g\psi_1\overline{w}dr=\int_0^{r_k}g\widetilde{\mathcal{K}}_k^{(r_k)}[gw](r)\overline{w}dr\leq\frac{2}{|k|+1}\int_0^{r_k}(\sigma(s)-\nu_k){|w(s)|^2}
ds.
\end{align*}
Then we conclude that
\begin{align*}
&{\rm Re}\langle\widetilde{\mathcal{H}}_kw,i\text{sgn}(\b_k)(\chi_{(0,r_k)}-\chi_{(r_k,+\infty)})w\rangle\\ &\geq|\b_k|\left(\frac{|k|+1}{2}\int_0^{r_k}g\psi_1\overline{w}dr-\int_0^{r_k}g\psi_1\overline{w}dr
+\int_{r_k}^{+\infty}(|\partial_r\psi_1|^2+\f{k^2-1/4}{r^2}|\psi_1|^2)dr\right)\\
&\quad-|\b_k\psi(r_k)\overline{J(r_k)}|-2|w'\overline{w}(r_k)|\\
&=|\b_k|\left(\frac{|k|-1}{2}\int_0^{r_k}g\psi_1\overline{s}dr+\int_{r_k}^{+\infty}(|\partial_r\psi_1|^2+\f{k^2-1/4}{r^2}|\psi_1|^2)dr\right)-|\b_k\psi(r_k)\overline{J(r_k)}|-2|\omega'\overline{\omega}(r_k)|\\
&=|\b_k|\left(\frac{|k|-1}{2}\int_0^{r_k} (|\partial_r\psi_1|^2+\f{k^2-1/4}{r^2}|\psi_1|^2)dr+\int_{r_k}^{+\infty}(|\partial_r\psi_1|^2+\f{k^2-1/4}{r^2}|\psi_1|^2)dr\right)\\
&\quad-|\b_k\psi(r_k)\overline{J(r_k)}|-2|w'\overline{\omega}(r_k)|\\
&\geq |\b_k|\f{A_1}{2}-|\b_k\psi(r_k)\overline{J(r_k)}|-2|w'\overline{w}(r_k)|,
\end{align*}
where
\begin{align*}
A_1=\int_{0}^{+\infty}(|\partial_r\psi_1|^2+(k^2-1/4)r^{-2}|\psi_1|^2)dr.
\end{align*}
This shows that
\begin{align}\label{eq:A1-est}
A_1\leq 2|\psi(r_k){J(r_k)}|+\f{1}{|\beta_k|}\big(4|w'\overline{w}(r_k)|+2\|w\|\|\widetilde{\mathcal{H}}_k\omega\|\big).
\end{align}

Similarly, we have
\begin{align*}
&{\rm Re}\langle\widetilde{\mathcal{H}}_kw,i\text{sgn}(\b_k)\frac{\chi_{\R_+\setminus(r_k-\d,r_k+\d)}}{\sigma-\nu_k}w\rangle\\
&={\rm Re}\big\langle -\partial_r^2w
+i\b_k((\sigma-\nu_k)w-g\psi),i\text{sgn}(\b_k)\frac{\chi_{\R_+\setminus(r_k-\d,r_k+\d)}}{\sigma-\nu_k}w\big\rangle\\
&={\rm Re}\int_{\R_+\setminus(r_k-\d,r_k+\d)}\left(-i\frac{\text{sgn}(\b_k)|w'|^2}{\sigma-\nu_k}+i\frac{\text{sgn}(\b_k)w'\overline{w}\sigma'}{(\sigma-\nu_k)^2}
+|\b_k||w|^2-\frac{|\b_k|g\psi\overline{w}}{\sigma-\nu_k}\right)dr\\
&\quad+{\rm Re}\Big(i\text{sgn}(\b_k)\frac{w'\overline{w}}{\sigma-\nu_k}(r_k-\d)
-i\text{sgn}(\b_k)\frac{w'\overline{w}}{\sigma-\nu_k}(r_k+\d)\Big)\\
&\geq-\|w'\|\|w\|_{L^{\infty}}\left\|\frac{\sigma'}{(\sigma-\nu_k)^2}\right\|_{L^2(\R_+\setminus(r_k-\d,r_k+\d))}+|\b_k|\int_{\R_+\setminus(r_k-\d,r_k+\d)}
|w|^2dr\\
&\quad-\f{|\beta_k|}{2}\int_{\R_+\setminus(r_k-\d,r_k+\d)}\Big(
|w|^2+\frac{g^2|\psi|^2}{(\sigma-\nu_k)^2}\Big)dr-\|w'\overline{w}\|_{L^{\infty}(r_k-\d,r_k+\d)}\Big(\frac{1}{\sigma(r_k-\d)-\nu_k}+\frac{1}{\nu_k-\sigma(r_k+\d)}\Big)\\
&=\f{|\beta_k|}{2}\int_{\R_+\setminus(r_k-\d,r_k+\d)}\Big(
|w|^2-\frac{g^2|\psi|^2}{(\sigma-v_k)^2}\Big)dr -\|w'\|_{L^2}\|w\|_{L^{\infty}}\left\|\frac{\sigma'}{(\sigma-\nu_k)^2}\right\|_{L^2(\R_+\setminus(r_k-\d,r_k+\d))}\\
&\quad-\|w'\overline{w}\|_{L^{\infty}(r_k-\d,r_k+\d)}\Big(\frac{1}{\sigma(r_k-\d)-\nu_k}+\frac{1}{\nu_k-\sigma(r_k+\d)}\Big),
\end{align*}
which gives
\begin{align*}
\|w\|_{L^2(\R_+\setminus(r_k-\d,r_k+\d))}^2\leq&\f{2}{|\beta_k|}\|\widetilde{\mathcal{H}}_kw\|\|w\|\left\|\frac{1}{\sigma-\nu_k}\right\|_{L^{\infty}(\R_+\setminus(r_k-\d,r_k+\d))}\\
&+\int_{\R_+\setminus(r_k-\d,r_k+\d)}\frac{g^2|\psi|^2}{(\sigma-\nu_k)^2}dr+\f{2}{|\beta_k|}\|w'\|\|w\|_{L^{\infty}}\left\|\frac{\sigma'}{(\sigma-\nu_k)^2}\right\|_{L^2(\R_+\setminus(r_k-\d,r_k+\d))}\\
&+\f{2}{|\beta_k|}\|w'\overline{w}\|_{L^{\infty}(r_k-\d,r_k+\d)}\Big(\frac{1}{\sigma(r_k-\d)-\nu_k}+\frac{1}{\nu_k-\sigma(r_k+\d)}\Big).
\end{align*}

By Lemma \ref{lem:sigma}, we have
\begin{align*}
\left\|\frac{1}{\sigma-\nu_k}\right\|_{L^{\infty}(\R_+\setminus(r_k-\d,r_k+\d))}\leq \frac{C}{|\sigma'(r_k)|\d},
\end{align*}
and
\begin{align*}
\left\|\frac{\sigma'}{(\sigma-\nu_k)^2}\right\|_{L^2(\R_+\setminus(r_k-\d,r_k+\d))}^2&\leq\left\|\frac{\sigma'}{\sigma-\nu_k}\right\|_{L^{\infty}(\R_+\setminus(r_k-\d,r_k+\d))}
\left\|\frac{\sigma'}{(\sigma-\nu_k)^3}\right\|_{L^1(\R_+\setminus(r_k-\d,r_k+\d))}\\
&\leq \frac{C}{\d}
\left(\frac{1}{(\sigma(r_k-\d)-\nu_k)^2}+\frac{1}{(v_k-\sigma(r_k+\d))^2}\right)\\
&\leq \frac{C}{\d(\sigma'(r_k)\d)^2}.
\end{align*}
Thus, we obtain
\begin{align*}
\|w\|\le&\|w\|_{L^2(\R_+\setminus(r_k-\d,r_k+\d))}^2+\|w\|_{L^2((r_k-\d,r_k+\d))}^2\\
\leq&\int_{\R_+\setminus(r_k-\d,r_k+\d)}\frac{g^2|\psi|^2}{(\sigma-\nu_k)^2}dr+\frac{C\|w'\|
\|w\|_{L^{\infty}}}{|\b_k\d^{\frac{3}{2}}\sigma'(r_k)|}\\
&+\frac{C\|w'\overline{w}\|_{L^{\infty}(r_k-\d,r_k+\d)}}{|\b_k\d\sigma'(r_k)|}
+\frac{C\|\widetilde{\mathcal{H}}_kw\|\|w\|}{|\b_k\d\sigma'(r_k)|}+2\delta\|w\|_{L^\infty}^2\\
\le& \int_{\R_+\setminus(r_k-\d,r_k+\d)}\frac{g^2|\psi|^2}{(\sigma-\nu_k)^2}dr+C\cE(w).
\end{align*}

It remains to estimate the first term, which is bounded by
\begin{align*}
\int_{\R_+\setminus(r_k-\d,r_k+\d)}\frac{g^2|\psi|^2}{(\sigma-\nu_k)^2}dr\leq 2\int_{\R_+}\frac{g^2|\psi_1|^2}{(\sigma-\nu_k)^2}dr+2\int_{\R_+\setminus(r_k-\d,r_k+\d)}\frac{g^2|\psi_2|^2}{(\sigma-\nu_k)^2}dr.
\end{align*}
Let us first consider the case of $0<r_k\leq 1$. By Lemma \ref{lem:sigma}, we have
\begin{align*}
&|\sigma(r)-\nu_k|\geq C^{-1}|r-r_k||\sigma'(r_k)|\geq C^{-1}|r-r_k|r_k,\quad0<r<r_k+1,\\
&|\sigma(r)-\nu_k|\geq C^{-1},\quad r\geq r_k+1.
\end{align*}
Due to $\psi_1(r_k)=0,$ we get by Hardy's inequality that
\begin{align*}
&\int_{0}^{r_k+1}\frac{g^2|\psi_1|^2}{(\sigma-\nu_k)^2}dr\leq C\int_{0}^{r_k+1}\frac{|\psi_1|^2}{|r-r_k|^2r_k^2}dr\leq \frac{C}{r_k^2}\int_{0}^{r_k+1}{|\partial_r\psi_1|^2}dr
\leq \frac{CA_1}{r_k^2},
\end{align*}
and by Lemma \ref{lem:sigma},
\begin{align*}
&\int_{r_k+1}^{+\infty}\frac{g^2|\psi_1|^2}{(\sigma-\nu_k)^2}dr\leq C\int_{r_k+1}^{+\infty}g^2|\psi_1|^2dr\leq C\int_{r_k+1}^{+\infty}\frac{|\psi_1|^2}{r^2}dr
\leq {CA_1}.
\end{align*}
Thanks to $0<\d<r_k$ and $|\psi_2(r)|\leq |\psi(r_k)|,$ we have
 \begin{align*}
 &\int_{(0,r_k+1)\setminus(r_k-\d,r_k+\d)}\frac{g^2|\psi_2|^2}{(\sigma-\nu_k)^2}dr\leq C\int_{(0,r_k+1)\setminus(r_k-\d,r_k+\d)}\frac{|\psi(r_k)|^2}{|r-r_k|^2r_k^2}dr\leq \frac{C|\psi(r_k)|^2}{r_k^2\d},
 \end{align*}
 and
 \begin{align*}
 &\int_{r_k+1}^{+\infty}\frac{g^2|\psi_2|^2}{(\sigma-v_k)^2}dr\leq C|\psi(r_k)|^2\int_{r_k+1}^{+\infty}g^2(r)dr\leq C|\psi(r_k)|^2.
 \end{align*}
 Therefore, we obtain
\begin{align*}
\int_{\R_+\setminus(r_k-\d,r_k+\d)}\frac{g^2|\psi|^2}{(\sigma-\nu_k)^2}dr\leq& \frac{CA_1}{r_k^2}+\frac{C|\psi(r_k)|^2}{r_k^2\d}\le C\cE(w),
\end{align*}
where we used \eqref{eq:A1-est} and the facts that ${|\sigma'(r_k)|\sim r_k}$, $g(r_k)\geq C^{-1}$ and $|\d\sigma'(r_k)|\leq Cr_k^2$.
\smallskip

Next we consider the case of $r_k\ge 1$. By Lemma \ref{lem:sigma}, we have
\begin{align*}
&|\sigma(r)-\nu_k|\geq C^{-1}|r-r_k||\sigma'(r_k)|\geq C^{-1}|r-r_k|r_k^{-3} ,\quad|r-r_k|<1,\\
&|\sigma(r)-\nu_k|\geq C^{-1}(1+r)^{-4}, \quad|r-r_k|\geq 1/r_k,
\end{align*}
and $g(r)\leq Cg(r_k)$ for  $|r-r_k|<1/r_k$. Thanks to $\psi_1(r_k)=0$ and ${g(r_k-1)^2}r_k^6\leq C$, we get by Hardy's inequality that
\begin{align*}
 &\int_{r_k-1}^{r_k+1}\frac{g^2|\psi_1|^2}{(\sigma-\nu_k)^2}dr\leq C\int_{r_k-1}^{r_k+1}\frac{g^2|\psi_1|^2r_k^6}{|r-r_k|^2}dr\leq {C}\int_{r_k-1}^{r_k+1}{|\partial_r\psi_1|^2}dr
\leq {CA_1},
\end{align*}
and
\begin{align*} &\int_{\R_+\setminus B(r_k,1)}\frac{g^2|\psi_1|^2}{(\sigma-v_k)^2}dr\leq C\int_{\R_+}g^2(r)(1+r)^8|\psi_1|^2dr\leq C\int_{\R_+}\frac{|\psi_1|^2}{r^2}dr
\leq {CA_1},
\end{align*}
where we denote $B(a,b)=(a-b,a+b).$ Since $0<\d<1/r_k$ and $|\psi_2(r)|\leq |\psi(r_k)|$, we have
 \begin{align*}
 &\int_{B(r_k,1/r_k)\setminus B(r_k,\d)}\frac{g^2|\psi_2|^2}{(\sigma-\nu_k)^2}dr\leq C\int_{B(r_k,1/r_k)\setminus B(r_k,\d)}\frac{g^2(r_k)|\psi(r_k)|^2}{|r-r_k|^2\sigma'(r_k)^2}dr\leq \frac{Cg^2(r_k)|\psi(r_k)|^2}{\sigma'(r_k)^2\d},
 \end{align*}
and due to $|\psi_2(r)|\leq (r/r_k)^{\frac{5}{2}}|\psi(r_k)|$, we have
 \begin{align*}
 &\int_{\R_+\setminus B(r_k,1/r_k) }\frac{g^2|\psi_2|^2}{(\sigma-\nu_k)^2}dr\leq C\int_{R_+}(r/r_k)^{5}{g^2(r)|\psi(r_k)|^2}(1+r)^8dr\leq \frac{C|\psi(r_k)|^2}{r_k^5}.
 \end{align*}
Therefore, we obtain
\begin{align*}
\int_{\R_+\setminus B(r_k,\d)}\frac{g^2|\psi|^2}{(\sigma-\nu_k)^2}dr\leq {CA_1}+\frac{Cg^2(r_k)|\psi(r_k)|^2}{\sigma'(r_k)^2\d}+\frac{C|\psi(r_k)|^2}{r_k^5}\le C\cE(w),
\end{align*}
where we used \eqref{eq:A1-est} and the facts that $C^{-1}r_k^{-3}\leq|\sigma'(r_k)|\leq Cr_k^{-3} $ and $|\d\sigma'(r_k)|\leq C$.

This completes the proof of the lemma.
\end{proof}

Now we are in a position to show that for $|\beta_k|\geq\max\big(\f{|k|^3}{r_k^4},|k|^3,r_k^6\big)$,
\begin{align}
\|\widetilde{\mathcal{H}}_kw\|\gtrsim|\beta_k|^{\f13}\|w\|.
\end{align}

By Lemma \ref{lem:w-L2}, we have
\beno
\|w\|\le C\cE(w)=C\big(\cE_1(w)+\cdots+\cE_7(w)\big).
\eeno
In the following, we handle each $\cE_i(w)$. Using the fact that
\begin{align}
&|J(r)|\leq\int_0^{r}|w(s)|ds+\int_r^{+\infty}\left(\dfrac{r}{s}\right)^{\frac{3}{2}}|w(s)|ds\leq Cr\|w\|_{L^{\infty}},\label{eq:J-est1}\\
&|J(r)|\leq\int_0^{+\infty}\left(\dfrac{s}{r}\right)^{\frac{5}{2}}|gw|(s)ds\leq Cr^{-\frac{5}{2}}\min(\|w\|_{L^{\infty}},\|w\|),\label{eq:J-est2}
\end{align}
we deduce that
\begin{align}
&\cE_5(w)\leq \f{C}{r_k}|\psi(r_k)|\|w\|_{L^{\infty}}\leq \cE_4(w)+C\f{1}{\delta}\f{|\psi(r_k)|^2}{r_k^2}\leq \cE_4(w)+C\cE_6(w),\quad 0<r_k\leq1,\label{eq:E5-est1}\\
&\cE_5(w)\leq C\f{1}{r_k^{5/2}}|\psi(r_k)|\|w\|\leq C\cE_7(w)^{\frac{1}{2}}\|w\|,\quad r_k\geq1.\label{eq:E5-est2}
 \end{align}
Using the fact that
\begin{align}
&|\psi(r)|\leq r\int_0^{r}|w(s)|ds+\int_r^{+\infty}\left(\dfrac{r}{s}\right)^{2}(rs)^{\frac{1}{2}}|w(s)|ds\leq Cr^2\|w\|_{L^{\infty}},\label{eq:psi-est1}\\
&|\psi(r)|\leq\int_0^{+\infty}\left(\dfrac{s}{r}\right)^{2}(rs)^{\frac{1}{2}}|gw|(s)ds\leq Cr^{-\frac{3}{2}}\min(\|w\|_{L^{\infty}},\|w\|),\label{eq:psi-est2}
\end{align}
we deduce that
\begin{align}
\cE_7(w)\leq Cr_k^{-8}\|w\|^2,\label{eq:E7-est1}
\end{align}
As $ \d<1,\ |\sigma'(r_k)|<C$, we also have
\begin{align}
\cE_7(w)\leq C\cE_6(w){g(r_k)^{-2}}.\label{eq:E7-est2}
\end{align}
We introduce
\begin{align}
\cF(w)=\delta\|w\|^2_{L^{\infty}}+\delta^2\|w'\|^2+\delta^2\|w\|\|\widetilde{\mathcal{H}}_kw\|+\delta^4\|\widetilde{\mathcal{H}}_kw\|^2.
\end{align}
It is easy to see that
\begin{align}
\cE_4(w)\leq \cF(w),\label{eq:E4-est}
\end{align}
and by \eqref{eq:delta-sigma}, we have
\begin{align}
\cE_1(w)+\cE_2(w)\leq C\cF(w).\label{eq:E12-est}
\end{align}

To proceed, we need the following $L^\infty$ estimate of $w'$ and $\psi$.

\begin{lemma}\label{lem:F}
It holds that
\begin{align*}
\d^3\|w'\|_{L^{\infty}(B(r_k,\d))}^2+\f{1}{(\sigma'(r_k))^2\delta}\|g\psi\|_{L^{\infty}(B(r_k,\d))}^2\leq C\cF(w).
\end{align*}
\end{lemma}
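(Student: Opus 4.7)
The plan is to combine a one-dimensional Agmon-type interpolation on the short interval $I := B(r_k,\delta)$ with the ODE satisfied by $w$, treating the two quantities on the left-hand side simultaneously since they are coupled through the equation. The elementary inequality
\begin{align*}
\|f\|_{L^\infty(I)}^2 \le \frac{1}{2\delta}\|f\|_{L^2(I)}^2 + 2\|f\|_{L^2(I)}\|f'\|_{L^2(I)} \qquad (f \in H^1(\R_+))
\end{align*}
follows from writing $f(r)^2 = f(r_\ast)^2 + 2\int_{r_\ast}^r f f'\,ds$ for an averaged $r_\ast \in I$. The equation
\begin{align*}
\widetilde{\mathcal{H}}_k w = -w'' + V(r) w + i\beta_k\bigl((\sigma - \nu_k)w - g\psi\bigr), \qquad V(r) = \frac{k^2-1/4}{r^2} + \frac{r^2}{16} - \frac{1}{2},
\end{align*}
will then be used to isolate $w''$ on $I$. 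The constraint $\delta \le \min(r_k/|k|, 1/r_k)$ gives $|V(r)| \le C/\delta^2$, and Lemma \ref{lem:sigma} together with $|\beta_k|\delta^3|\sigma'(r_k)| \sim 1$ (from \eqref{eq:delta-sigma}) yields $|\beta_k(\sigma(r) - \nu_k)| \le C|\beta_k|\delta|\sigma'(r_k)| \le C/\delta^2$ on $I$. Hence
\begin{align*}
\|w''\|_{L^2(I)} \le \|\widetilde{\mathcal{H}}_k w\| + C\delta^{-2}\|w\|_{L^2(I)} + |\beta_k|\|g\psi\|_{L^2(I)},
\end{align*}
and the trivial bounds $\|w\|_{L^2(I)} \le (2\delta)^{1/2}\|w\|_{L^\infty}$, $\|g\psi\|_{L^2(I)} \le (2\delta)^{1/2}\|g\psi\|_{L^\infty(I)}$ feed the Agmon argument with data directly controlled by the right-hand side.

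Applying the Agmon inequality to $f = w'$, inserting the $w''$-bound, multiplying by $\delta^3$, and using Young's inequality to match each resulting product against one of the four constituents $\delta\|w\|_{L^\infty}^2$, $\delta^2\|w'\|^2$, $\delta^2\|w\|\|\widetilde{\mathcal{H}}_k w\|$, $\delta^4\|\widetilde{\mathcal{H}}_k w\|^2$ of $\cF(w)$ produces
\begin{align*}
\delta^3\|w'\|_{L^\infty(I)}^2 \le C\cF(w) + \frac{1}{2}\cdot\frac{\|g\psi\|_{L^\infty(I)}^2}{(\sigma'(r_k))^2\delta}.
\end{align*}
The precise coefficient match $\delta^{7/2}|\beta_k| \sim \delta^{1/2}/|\sigma'(r_k)|$ (from \eqref{eq:delta-sigma}) is exactly what dictates the weight $1/((\sigma'(r_k))^2\delta)$ on the second term of the lemma. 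A parallel Agmon estimate applied to $f = g\psi$, using $g'(r) = -(r/4)g(r)$ and \eqref{eq:psi-J} to compute
\begin{align*}
(g\psi)' = -\frac{r}{4}g\psi - \frac{gJ}{2} + \frac{g\psi}{4|k|r},
\end{align*}
together with the $J$-bounds \eqref{eq:J-est1}--\eqref{eq:J-est2} and the pointwise identity $|\beta_k||g\psi| \le |w''| + C\delta^{-2}|w| + |\widetilde{\mathcal{H}}_k w|$ coming from the ODE, yields the symmetric companion
\begin{align*}
\frac{\|g\psi\|_{L^\infty(I)}^2}{(\sigma'(r_k))^2\delta} \le C\cF(w) + \frac{1}{2}\,\delta^3\|w'\|_{L^\infty(I)}^2.
\end{align*}
Adding these two inequalities and absorbing the cross terms yields the conclusion.

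The principal obstacle is this intrinsic coupling of $w''$ and $g\psi$ through the ODE on $I$: a naive Agmon bound on either $w'$ or $g\psi$ alone produces a cross contribution that cannot be absorbed into $\cF(w)$. The weights $\delta^3$ and $1/((\sigma'(r_k))^2\delta)$ in the statement of Lemma \ref{lem:F} are calibrated precisely so that Young's inequality splits the cross terms symmetrically between the two estimates and they close on each other. The remaining ingredients — the pointwise coefficient bounds on $I$, the $J$-bounds, and the Agmon interpolation itself — are already supplied by Lemma \ref{lem:sigma} and the preparatory computations recorded earlier in the section.
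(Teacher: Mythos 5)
Your first half—the Agmon estimate for $w'$—does close: $\d^3\|w'\|_{L^\infty(I)}^2 \le \f{\d^2}{2}\|w'\|^2 + 2\d^3\|w'\|\|w''\|_{L^2(I)}$, and after substituting $\|w''\|_{L^2(I)} \le \|\widetilde{\cH}_kw\| + C\d^{-2}\|w\|_{L^2(I)} + |\b_k|\|g\psi\|_{L^2(I)}$, using $\|g\psi\|_{L^2(I)} \le (2\d)^{1/2}\|g\psi\|_{L^\infty(I)}$ and $\d^3|\b_k||\sigma'(r_k)|\sim 1$, Young's inequality indeed yields $\d^3\|w'\|_{L^\infty(I)}^2 \le C\cF(w) + \tfrac12 \|g\psi\|_{L^\infty(I)}^2/(\d(\sigma'(r_k))^2)$.

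The gap is in the ``symmetric companion.'' Agmon applied to $f = g\psi$ gives
\begin{align*}
\f{\|g\psi\|_{L^\infty(I)}^2}{\d(\sigma'(r_k))^2}
\le \f{\|g\psi\|_{L^2(I)}^2}{2\d^2(\sigma'(r_k))^2}
+ \f{2\|g\psi\|_{L^2(I)}\|(g\psi)'\|_{L^2(I)}}{\d(\sigma'(r_k))^2},
\end{align*}
and while the second term is fine once you use $|(g\psi)'| \lesssim |\sigma'(r_k)|\|w\|_{L^\infty}$ on $I$, the first term still contains $\|g\psi\|_{L^2(I)}$. You have no independent handle on this: the trivial bound $\|g\psi\|_{L^2(I)} \le (2\d)^{1/2}\|g\psi\|_{L^\infty(I)}$ is circular, and the only other route is the ODE, $|\b_k|\|g\psi\|_{L^2(I)} \le \|w''\|_{L^2(I)} + C\d^{-2}\|w\|_{L^2(I)} + \|\widetilde{\cH}_kw\|$, which reintroduces $\|w''\|_{L^2(I)}$ — controlled only by the ODE, which reintroduces $\|g\psi\|_{L^2(I)}$. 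Under the critical scaling $\d^3|\b_k||\sigma'(r_k)| \sim 1$, the self-referential term returns with a coefficient of order one, not small, so no absorption argument can close this loop. The ``pointwise identity'' $|\b_k||g\psi| \le |w''| + C\d^{-2}|w| + |\widetilde{\cH}_kw|$ you invoke cannot rescue this either, since $|w''|$ and $|\widetilde{\cH}_kw|$ have no pointwise control.

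The paper's proof breaks the circularity by a different mechanism: by pigeonhole pick $r_*\in(r_k-\d,r_k)$ with $|w'(r_*)|^2 + |w'(r_*+\d)|^2 \le \d^{-1}\|w'\|^2$, integrate the identity $-w'' + u_2 - i\b_ku_1 = \widetilde{\cH}_kw$ (with $u_1 = g\psi$) over $(r_*, r_*+\d)$ to obtain $|\int_{r_*}^{r_*+\d}u_1\,dr| \le C|\b_k|^{-1}\d^{-3/2}\cF(w)^{1/2}$, and then use the Lipschitz estimate $|u_1(r) - u_1(s)| \le C\d|\sigma'(r_k)|\|w\|_{L^\infty}$ on $I$ to upgrade the average-bound to the pointwise bound $\|g\psi\|_{L^\infty(I)} \le C\d^{1/2}|\sigma'(r_k)|\cF(w)^{1/2}$. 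This step is a direct estimate with no absorption, and only after it is established does the paper bound $\|w'\|_{L^\infty(I)}$ via $\|w'\|_{L^\infty(I)} \le |w'(r_*)| + \|w''\|_{L^1(I)}$ together with the ODE. You will need some version of this integrated-ODE argument; a purely Agmon-plus-Young scheme cannot produce the $g\psi$ bound.
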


\begin{proof}
Let
\beno
u=\widetilde{\mathcal{H}}_kw,\quad u_1=g\psi,\quad u_2=\Big(\dfrac{k^2-1/4}{r^2}+\dfrac{r^2}{16}-\dfrac{1}{2}\Big)w+i\b_k(\sigma-\nu_k)w.
\eeno
Then we have
\begin{align}\label{eq:u-w}
-w''+u_2-i\b_ku_1=u.
\end{align}
Due to $0<\d\leq\min(\f{r_k}{|k|},\f{1}{r_k})$ {and Lemma \ref{lem:sigma}}, we have
 \begin{align}
 \|u_2\|_{L^{\infty}(B(r_k,\d))}&\leq \Big\|\dfrac{k^2-1/4}{r^2}+\dfrac{r^2}{16}-\dfrac{1}{2}\Big\|_{L^{\infty}(B(r_k,\d))}\|w\|_{L^{\infty}}+\|\b_k(\sigma-\nu_k)\|_{L^{\infty}(B(r_k,\d))}
 \|w\|_{L^{\infty}}\nonumber\\
 &\leq C\Big(\dfrac{k^2}{r_k^2}+{r_k^2}+|\b_k\d\sigma'(r_k)|\Big)\|w\|_{L^{\infty}}\nonumber\\
 &\leq C(\d^{-2}+|\b_k\d\sigma'(r_k)|)\|w\|_{L^{\infty}}\leq C\d^{-2}\|w\|_{L^{\infty}}.\label{eq:u2-est}
\end{align}
By \eqref{eq:psi-est1}, \eqref{eq:J-est1} and \eqref{eq:psi-J}, we get
\begin{align*}
|\partial_r\psi|\leq Cr\|w\|_{L^{\infty}},
\end{align*}
which gives
\begin{align*}
|\partial_r u_1|&\leq g|\partial_r\psi|+|\partial_rg||\psi|\leq g(|\partial_r\psi|+r|\psi|)\\
&\leq Cg(r)^{\frac{1}{2}}(r+r^3)\|w\|_{L^{\infty}}\leq C|\sigma'(r)|\|w\|_{L^{\infty}}.
\end{align*}
In particular, for $r,s\in B(r_k,\d)$,
\begin{align}
|u_1(r)-u_1(s)|\leq C\d|\sigma'(r_k)|\|\omega\|_{L^{\infty}}.\label{eq:u1-est}
\end{align}
Choose $r_*\in(r_k-\d,r_k)$ such that
\begin{align*}
|w'(r_*)|^2+|w'(r_*+\d)|^2\leq \f{1}{\delta}\|w'\|^2,
\end{align*}
which along with \eqref{eq:u-w} gives
\begin{align*}
\left|\int_{r_*}^{r_*+\d}(u_2-i\b_ku_1-u)dr\right|=|w'(r_*+\d)-w'(r_*)|\leq 2\d^{-\frac{1}{2}}\|w'\|,
\end{align*}
from which and \eqref{eq:u2-est}, we infer that
\begin{align*}
 \left|\int_{r_*}^{r_*+\d}u_1dr\right|&\leq |\b_k|^{-1}(\|u_2\|_{L^1(r_*,r_*+\d)} +\|u\|_{L^1(r_*,r_*+\d)}+ 2\d^{-\frac{1}{2}}\|w'\|)\\
 &\leq |\b_k|^{-1}(\d\|u_2\|_{L^{\infty}(r_*,r_*+\d)} +\d^{\frac{1}{2}}\|u\|+ 2\d^{-\frac{1}{2}}\|w'\|)\\
 &\leq |\b_k|^{-1}(C\d^{-1}\|w\|_{L^{\infty}} +\d^{\frac{1}{2}}\|\widetilde{\mathcal{H}}_kw\|+ 2\d^{-\frac{1}{2}}\|w'\|)\\
 &\leq C|\b_k|^{-1}\d^{-\frac{3}{2}}\cF(w)^{\frac{1}{2}}.
\end{align*}
For $s\in B(r_k,\d),$ we get by \eqref{eq:u1-est} that
\begin{align*}& \left|\d u_1(s)-\int_{r_*}^{r_*+\d}u_1dr\right|\leq \int_{r_*}^{r_*+\d}|u_1(s)-u_1(r)|dr\leq C\d^2|\sigma'(r_k)|\|w\|_{L^{\infty}},
\end{align*}
which gives
\begin{align}
 |u_1(s)|\leq C(\d^{\frac{1}{2}}|\sigma'(r_k)|+|\b_k|^{-1}\d^{-\frac{5}{2}})A^{\frac{1}{2}}\leq C\d^{\frac{1}{2}}|\sigma'(r_k)|\cF(w)^{\frac{1}{2}},\label{eq:u1-est2}
\end{align}
that is,
\beno
\|g\psi\|_{L^{\infty}(B(r_k,\d))}=\|u_1\|_{L^{\infty}(B(r_k,\d))}\leq C\d^{\frac{1}{2}}|\sigma'(r_k)|\cF(w)^{\frac{1}{2}}.
\eeno
Using \eqref{eq:u-w}, \eqref{eq:u2-est} and \eqref{eq:u1-est2}, we infer that
 \begin{align*}
 &\|w'\|_{L^{\infty}(B(r_k,\d))}\leq |w'(r_*)|+\|w''\|_{L^{1}(B(r_k,\d))}\\
 &\leq \d^{-\frac{1}{2}}\|w'\|+\|u_2\|_{L^{1}(B(r_k,\d))}+|\b_k| \|u_1\|_{L^{1}(B(r_k,\d))}+\|u\|_{L^{1}(B(r_k,\d))}\\
 &\leq \d^{-\frac{1}{2}}\|w'\|+2\d\|u_2\|_{L^{\infty}(B(r_k,\d))}+2|\b_k|\d \|u_1\|_{L^{\infty}(B(r_k,\d))}+2\d^{\frac{1}{2}}\|u\|\\
 &\leq \d^{-\frac{1}{2}}\|w'\|+C\d^{-1}\|w\|_{L^{\infty}}+C|\b_k|\d^{\frac{3}{2}}|\sigma'(r_k)|\cF(w)^{\frac{1}{2}}+2\d^{\frac{1}{2}}\|u\|\\
 &\leq C\d^{-\frac{3}{2}}\cF(w)^{\frac{1}{2}}.
 \end{align*}
This completes the proof of the lemma.
\end{proof}

Now we infer from Lemma \ref{lem:F} that
\begin{align}
\cE_6(w)\leq C\cF(w),\label{eq:E6-est}
\end{align}
and by \eqref{eq:delta-sigma},
\begin{align}
\cE_3(w)\leq& \f{1}{|\b_k\d\sigma'(r_k)|}\|w'\|_{L^{\infty}(B(r_k,\d))}\|w\|_{L^{\infty}}\nonumber\\
\leq& C\d^2\|w'\|_{L^{\infty}(B(r_k,\d))}\|w\|_{L^{\infty}}\leq C\cF(w).\label{eq:E3-est}
\end{align}

If $0<r_k\leq1,$ we deduce from \eqref{eq:E12-est}, \eqref{eq:E3-est}, \eqref{eq:E4-est}, \eqref{eq:E5-est1}, \eqref{eq:E6-est}, \eqref{eq:E7-est2} that
\begin{align*}
\|w\|^2\leq C\cF(w).
\end{align*}
If $r_k\geq1,$ we similarly have
\begin{align*}
\|w\|^2\leq C\big(\cF(w)+\cE_7(w)^{\frac{1}{2}}\|w\|+\cE_7(w)\big)\le {C_0}\big(\cF(w)+\cE_7(w)\big).
\end{align*}
Now if $\|w\|^2< 2{C_0}\cE_7(w),$ we get by \eqref{eq:E7-est1} that
\begin{align*}
\|w\|^2< 2Cr_k^{-8}\|w\|_{L^{2}}^2,
\end{align*}
which implies that $r_k\leq C$, thus, ${g(r_k)^{-2}}\leq C$. Hence,
\begin{align*}
\cE_7(w)\leq C\cE_6(w)\leq CA \Longrightarrow\|w\|^2\leq C\cF(w).
\end{align*}
While, if $\|w\|^2\geq 2C_0\cE_7(w)$, we have
\begin{align*}
\|w\|^2\leq 2C\cF(w).
\end{align*}

Thanks to $\|w\|^2_{L^{\infty}}\leq \|w'\|\|w\|$ and $\|w'\|^2\leq\|w\|\|\widetilde{\mathcal{H}}_kw\|$, we have
\begin{align*}
\cF(w)\leq &\delta \|w' \|\|w\|+2\delta^2\|w\|\|\widetilde{\mathcal{H}}_kw\|+\delta^4\|\widetilde{\mathcal{H}}_kw\|^2\\
\leq &\|w\|^{\f32}(\delta^2\|\widetilde{\mathcal{H}}_kw\|)^{\f12}+2\|w\|_{L^2}(\delta^2\|\widetilde{\mathcal{H}}_kw\|)
+((\delta^2\|\widetilde{\mathcal{H}}_kw\|))^2,
\end{align*}
which along with $\|w\|^2\le C\cF(w)$ implies that
\begin{align*}
\|w\|\le C\delta^2\|\widetilde{\mathcal{H}}_kw\|.
\end{align*}
Due to the choice of $\d$, we obtain
\begin{align*}
\|\widetilde{\mathcal{H}}_kw\|\gtrsim |\beta_k|^{\f13}\|w\|.
\end{align*}

\section{Spectral lower bound}

Recall that
\begin{align}
\widetilde{\mathcal{H}}_{\alpha,k ,0}=-\partial^2_r+\f{k^2-\f14}{r^2}+\f{r^2}{16}-\f12+i\beta_k\sigma(r)-i\beta_k g\widetilde{\mathcal{K}}_k[g\cdot].\nonumber
\end{align}
We know that for $|k|=1$, $\widetilde{\mathcal{H}}_{\alpha,k,0}$ in $\{r^{\f32}g(r)\}^{\bot}\cap L^2(\mathbb{R}_+, dr)$ is isometric with $T\widetilde{\mathcal{H}}_{\alpha,k,0}T^{-1}=-\partial_r^2+\f{3}{4r^2}+\f{r^2}{16}-\f12+f(r)+i\beta_k\sigma(r)$ in $ L^2(\mathbb{R}_+, dr)$. Hence, we just consider $\widetilde{\mathcal{H}}_{\alpha,k}$ in the form
\begin{align*}
\widetilde{\mathcal{H}}_{\alpha,k}=
\left\{
\begin{aligned}
&-\partial_r^2+\f{3}{4r^2}+\f{r^2}{16}-\f12+f(r)+i\beta_k\sigma(r),\quad|k|=1,\\
&-\partial_r^2+\f{k^2-\f14}{r^2}+\f{r^2}{16}-\f12+i\beta_k\sigma(r)-i\beta_k g\tilde{\mathcal{K}}_k[g\cdot],\quad |k|\geq2.
\end{aligned}
\right.
\end{align*}
Notice that
\beno
\sigma(L-\al\Lambda|_{(\ker \Lambda)^{\bot}})=\bigcup\limits_{k\in \mathbb{Z}\setminus\{0\}}\sigma(\widetilde{\cH}_{\al,k}).
\eeno
Then we define
\begin{align*}
\Sigma(\alpha,k)=\inf{\rm Re}\,\sigma(\widetilde{\mathcal{H}}_{\alpha,k}), \quad\Sigma(\alpha)=\inf\limits_{k\in\mathbb{Z}\setminus\{0\}}\Sigma(\alpha,k).
\end{align*}

Our main result is the following spectral lower bound.
\begin{theorem}\label{thm:spectral}
For any $|k|\ge 1$, we have
\begin{align}
\Sigma(\alpha,k)\geq C^{-1}|\beta_k|^{\frac{1}{2}},\quad \Sigma(\alpha)\geq C^{-1}|\alpha|^{\frac{1}{2}}.\nonumber
\end{align}
\end{theorem}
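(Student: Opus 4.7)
The goal is to show that for every eigenvalue $z=\mu+i\lambda\in\sigma(\widetilde{\cH}_{\alpha,k})$ with normalized eigenfunction $w\in D$ one has $\mu\geq C^{-1}|\beta_k|^{1/2}$; the bound on $\Sigma(\alpha)$ then follows from $|\beta_k|\geq|\alpha|/(8\pi)$ for $|k|\geq 1$. For $|k|=1$ I pass to the local model operator $\mathcal{L}_1=T\widetilde{\cH}_1T^{-1}$ of Section~4; for $|k|\geq 2$ I work directly with $\widetilde{\cH}_k$. Taking real and imaginary parts of $\langle \widetilde{\cH}_{\alpha,k}w,w\rangle=\mu+i\lambda$ yields
$$\mu=\langle \widetilde{A}_kw,w\rangle,\qquad \nu_k:=\lambda/\beta_k=\langle \widetilde{B}_kw,w\rangle\in[0,1],$$
the latter by Lemma~\ref{lem:HktoLk}. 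The endpoints $\nu_k\in\{0,1\}$ are inaccessible to a nontrivial $L^2$ eigenfunction (they would force concentration at $r=\infty$ or $r=0^+$), so only the trapped regime $\nu_k=\sigma(r_k)\in(0,1)$ with $r_k>0$ needs attention.

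A careful reading of the arguments of Sections~4 and~5 shows that in every subcase other than Case~3 Subcase~2 and the last subsubcase of Case~3 Subcase~1, the inequalities actually deliver the stronger form $|\langle \widetilde{\cH}_kw,w\rangle|\gtrsim|\beta_k|^{1/2}\|w\|^2$. Since $\langle \widetilde{\cH}_kw,w\rangle=\mu\|w\|^2$ for an eigenfunction, these subcases yield $\mu\geq C^{-1}|\beta_k|^{1/2}$ immediately. The remaining subcases concentrate in the regime $r_k\in[c|\beta_k|^{-1/4},C|\beta_k|^{1/4}]$, where the pseudospectral loss $|\beta_k|^{1/3}$ is sharp for pseudomodes (as demonstrated in Section~4.4).

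In this remaining regime I exploit the exact orthogonality $\int(\sigma-\nu_k)|w|^2\,dr=0$ valid only for genuine eigenfunctions. Testing the eigenvalue equation against $(\sigma-\nu_k)\bar w$, taking the imaginary part, and integrating by parts the $-\partial_r^2$ piece produces
$$\beta_k\,\|(\sigma-\nu_k)w\|^2=-\text{Im}\!\int_0^{+\infty}\sigma'(r)\,w'(r)\overline{w(r)}\,dr+(\text{nonlocal correction}),$$
where the nonlocal correction, arising from $g\widetilde{\mathcal{K}}_k[g\cdot]$ for $|k|\geq 2$, is controlled using Lemma~\ref{lem:K}. Combined with $\|w'\|^2\leq\langle \widetilde{A}_kw,w\rangle+C\|w\|^2\leq C\mu$, this gives the sharp concentration estimate $|\beta_k|\,\|(\sigma-\nu_k)w\|^2\lesssim\mu^{1/2}$. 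Coupling this with the coercivity $\widetilde{A}_k\gtrsim k^2/r^2+r^2$ of Lemmas~\ref{lem:coer-A1+f} and~\ref{lem:coer-Ak}, the pseudospectral-type bound $\mu\gtrsim|\beta_k\sigma'(r_k)|^{2/3}$ inherited from the Case~3 Subcase~2 argument, and the pointwise $L^{\infty}$ control of $w$ and $w'$ on $|r-r_k|\leq\delta$ supplied by Lemma~\ref{lem:F}, a bootstrap in the localization scale $\delta\sim|\beta_k\sigma'(r_k)|^{-1/3}$ closes the gap to $\mu\geq C^{-1}|\beta_k|^{1/2}$.

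The principal obstacle is the worst sub-regime $r_k\sim|\beta_k|^{1/6}$, where each individual ingredient (coercivity, pseudospectral bound, energy identity) falls short of $|\beta_k|^{1/2}$ by a factor of $|\beta_k|^{1/6}$; closing the gap requires the three bounds to be combined simultaneously, with the exact orthogonality of eigenfunctions providing exactly the missing improvement over pseudomodes. This is where the distinction between spectrum and pseudospectrum is activated, and executing the bootstrap cleanly (uniformly in $k$, $\alpha$, and $r_k$) is the technical heart of the argument.
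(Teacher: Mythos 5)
Your proposal takes a genuinely different route from the paper and, more importantly, does not actually constitute a proof. The paper's argument is a \emph{complex deformation} (analytic dilation) argument: it introduces the dilation group $U_\theta\omega(r)=e^{\theta/2}\omega(e^\theta r)$, shows that $\widetilde{\mathcal{H}}_{\alpha,k}^{(\theta)}=U_\theta\widetilde{\mathcal{H}}_{\alpha,k}U_\theta^{-1}$ extends to an analytic family of type (A) in a complex strip by analytic continuation of the functions $\sigma,g,f$ along the sector $\Gamma$, and then, since the spectrum is independent of the dilation parameter, obtains
\begin{align*}
\Sigma(\alpha,k)\geq\inf_{\|w\|=1}\mathrm{Re}\,\langle\widetilde{\mathcal{H}}_{\alpha,k}^{(i\theta)}w,w\rangle.
\end{align*}
The pay-off is that after rotating by $i\theta$ the formerly skew-symmetric term $i\beta_k\sigma$ contributes a \emph{positive} real part $-\beta_k\mathrm{Im}\,F_1(r^2e^{2i|\theta|}/4)\gtrsim|\beta_k|\sin|\theta|\min(r^2,r^{-2})$ (Lemma \ref{lem:F1-est}), and balancing this against the oscillator potential $r^{-2}+r^2$ immediately yields $|\beta_k|^{1/2}$ by AM--GM, with a separate positivity argument needed for the nonlocal piece $g\widetilde{\mathcal K}_k[g\cdot]$ when $|k|\geq2$. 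This is a one-shot coercivity estimate, not a bootstrap.

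Your approach -- exploiting the exact orthogonality $\int(\sigma-\nu_k)|w|^2dr=0$ for genuine eigenfunctions by testing the eigenvalue equation against $(\sigma-\nu_k)\bar w$, then combining with the coercivity and pseudospectral bounds from Section 5 -- is philosophically appealing as a direct attempt to capture the spectrum/pseudospectrum gap, but the critical step is not done. You state that the concentration estimate $|\beta_k|\|(\sigma-\nu_k)w\|^2\lesssim\mu^{1/2}\|w\|^2$, the coercivity $\mu\gtrsim\langle(k^2/r^2+r^2)w,w\rangle$, and the pseudospectral-type bound $\mu\gtrsim|\beta_k\sigma'(r_k)|^{2/3}$ are to be ``combined simultaneously'' via a bootstrap at scale $\delta\sim|\beta_k\sigma'(r_k)|^{-1/3}$, and you yourself acknowledge that each individual ingredient falls short by a factor $|\beta_k|^{1/6}$ in the worst regime $r_k\sim|\beta_k|^{1/6}$. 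But you never exhibit the combination. It is far from obvious that these three inequalities, even jointly, close the gap: the concentration estimate controls $\|(\sigma-\nu_k)w\|$, which near $r_k$ is a degenerate weight, and turning that into a lower bound on $\langle\widetilde A_k w,w\rangle$ of the required size is precisely what is hard. Moreover, your treatment of the nonlocal correction from $g\widetilde{\mathcal K}_k[g\cdot]$ -- which is \emph{not} controlled by Lemma \ref{lem:K} alone on all of $\mathbb{R}_+$ (that lemma only covers $(0,r_k)$) -- is not worked out. The proposal is an outline of a possible alternative strategy, not a proof, and the key technical lemma (the bootstrap closing the $|\beta_k|^{1/6}$ gap) is missing.
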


Motivated by \cite{GGN}, we will use the complex deformation method.

\subsection{Complex deformation}
We introduce the group of dilations
\beno
(U_{\theta}\omega)(r)=e^{\theta/2}\omega(e^{\theta}r),
\eeno
which are unitary operators for $\theta\in\R$. We consider
\begin{align}
\widetilde{\mathcal{H}}_{\alpha,k}^{(\theta)}=U_{\theta}\widetilde{\mathcal{H}}_{\alpha,k}U_{\theta}^{-1}.
\end{align}
Then we have
\begin{align*}
\widetilde{\mathcal{H}}_{\alpha,k}^{(\theta)}=
\left\{
\begin{aligned}
&-e^{-2\theta}\partial_r^2w+\left(\dfrac{3e^{-2\theta}}{4r^2}+\dfrac{r^2e^{2\theta}}{16}
-\dfrac{1}{2}+f(re^{\theta})\right)+i\b_k\sigma(re^{\theta}),\quad|k|=1,\\
&-e^{-2\theta}\partial_r^2+\left(\dfrac{k^2-1/4}{4r^2e^{2\theta}}+\dfrac{r^2e^{2\theta}}{16}
-\dfrac{1}{2}\right)+i\beta_k(\sigma(re^{\theta})-e^{2\theta}
g(re^{\theta})\widetilde{\mathcal{K}}_k[g(re^{\theta})\cdot]),\quad |k|\geq2.
\end{aligned}
\right.
\end{align*}
Now we consider the analytic continuation of $\widetilde{\mathcal{H}}_{\alpha,k}^{(\theta)}$.
For this, we first consider the analytic continuation of the functions $f,\sigma, g$. Let
 \begin{align*}
&F_0(z)=e^z-z-1,\quad F_1(z)=(1-e^{-z})/z,\\
&F_2(z)=e^{z/2},\quad F_3(z)=\left(\dfrac{z^2}{F_0(z)}-6+4z\right)\dfrac{z/2}{F_0(z)}.
 \end{align*}
 Then $F_0,\ F_1,\ F_2$ are holomorphic in $ \mathbb{C}$ (0 is a removable singularity of $F_1$) and $F_3$ is meromorphic in $ \mathbb{C}$,
 and we have
 \begin{align*}
 f(r)=F_3(r^2/4),\quad \sigma(r)=F_1(r^2/4),\quad g(r)=F_2(r^2/4).
 \end{align*}

The poles of $F_3$ are the zeros of $F_0.$ If $F_0(z)=0,\ z=x+iy,\ x,y\in\R,\ x>0,$ then
 \begin{align*}
 &e^{2x}=|e^z|=|1+z|^2=(1+x)^2+y^2, \\
&y^2=e^{2x}-(1+x)^2>1+2x+(2x)^2/2-(1+x)^2=x^2\Longrightarrow|y|>x,
 \end{align*}
hence, $F_3(z)$ is holomorphic in a neighbourhood of $\Gamma$, which is defined as
\begin{align*}
\Gamma=\big\{x+iy|x>0,-x\leq y\leq x\big\}=\big\{re^{i\theta}|r>0,-\pi/4\leq \theta\leq \pi/4\big\}.
\end{align*}
Let $F_4(z)=F_3(z)-\f{8}{z}$. As $\lim\limits_{z\to0}zF_3(z)=8$, $F_4(z)$ is holomorphic in a neighbourhood of $\Gamma\cup\{0\}$.
We have
\begin{align*}
|F_0(z)|\geq |e^z|-|z|-1\geq e^{|z|/2}-|z|-1, \quad z\in\Gamma,
\end{align*}
and
\begin{align*}
\lim_{z\to\infty, z\in \Gamma} \f{z^2}{F_0(z)}=0,\quad \lim_{z\to\infty, z\in \Gamma} F_3(z)=0,\quad \lim_{z\to\infty, z\in \Gamma} F_4(z)=0,
\end{align*}
thus $|F_4(z)|\leq C$ in $\Gamma$. We also have
\begin{align*}
F_2(z)\leq C(1+|z|)^{-1}.
\end{align*}

Now we rewrite $\widetilde{\mathcal{H}}_{\alpha,k}^{(\theta)}\omega$ as follows, {for $|k|=1$,}
\begin{align*}
\widetilde{\mathcal{H}}_{\alpha,{k}}^{(\theta)}\omega=&-e^{-2\theta}\partial_r^2w
+\left(\dfrac{35e^{-2\theta}}{4r^2}+\dfrac{r^2e^{2\theta}}{16}-\dfrac{1}{2}+F_4(\f{r^2e^{2\theta}}{4})\right)w
+i\b_kF_1(\f{r^2e^{2\theta}}{4})w,
\end{align*}
and for $|k|\ge 2$,
\begin{align*}
\widetilde{\mathcal{H}}_{\alpha,k}^{(\theta)}w=&-e^{-2\theta}\partial_r^2w
+\left(\dfrac{k^2-1/4}{{r^2e^{2\theta}}}+\dfrac{r^2e^{2\theta}}{16}-\dfrac{1}{2}\right)w+i\beta_k(F_1(\f{r^2e^{2\theta}}{4})w\\
&-e^{2\theta}F_2(\f{r^2e^{2\theta}}{4})\widetilde{\mathcal{K}}_k[F_2(\f{r^2e^{2\theta}}{4})w]).
\end{align*}

Thanks to the properties of $F_i(z)(i=0,1,\cdots,4)$ which are shown above,
$\{\widetilde{\mathcal{H}}_{\alpha,k}^{(\theta)}\}$ are defined as an analytic family of type (A) in the strip $\Gamma_1=\big\{\theta\in \mathbb{C}\big||\Im \theta|<\f{\pi}{8}\big\}$ with common domain ${{D}}=\big\{\omega\in H^2(\R_+)|\omega/r^2,r^2\omega\in L^2(\R_+)\big\}.$
In particular, the spectrum of $\widetilde{\mathcal{H}}_{\alpha,k}^{(\theta)} $ is always discrete and depends holomorphically on $\theta$.
Since the eigenvalues of $\widetilde{\mathcal{H}}_{\alpha,k}^{(\theta)}$ are constant for $\theta\in\R$,
they are also constant for $\theta\in\Gamma_1.$

Now we have
\begin{align}
\Sigma(\alpha,k)=\inf{\rm Re}\sigma(\widetilde{\mathcal{H}}_{\alpha,k}^{(\theta)})\geq\inf_{w\in \underline{{D}},\|w\|=1}{\rm Re}\langle \widetilde{\mathcal{H}}_{\alpha,k}^{(\theta)}w,w\rangle.\label{eq:spectral}
\end{align}

\subsection{Proof of Theorem \ref{thm:spectral}}

We need the following lemma.
\begin{lemma}\label{lem:F1-est}
For $r>0,0<\theta<\f{\pi}{4}$, we have
\begin{align}
-{\rm Im} F_1(r^{i\theta})\geq C^{-1}\sin\theta\min\big(r,\f{1}{r}\big).\nonumber
\end{align}
\end{lemma}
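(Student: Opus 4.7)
The plan is to use the integral representation $F_1(z)=\int_0^1 e^{-tz}\,dt$. With $z=re^{i\theta}$ this gives
\[
-\operatorname{Im} F_1(re^{i\theta})=\int_0^1 e^{-tr\cos\theta}\sin(tr\sin\theta)\,dt=\frac{1}{r}\int_0^r e^{-s\cos\theta}\sin(s\sin\theta)\,ds,
\]
and this elementary integral can be computed in closed form. Using the antiderivative $-e^{-s\cos\theta}\sin(s\sin\theta+\theta)$ of the integrand, one obtains
\[
-\operatorname{Im} F_1(re^{i\theta})=\frac{\sin\theta}{r}\,H(r,\theta),\qquad H(r,\theta):=1-\frac{e^{-r\cos\theta}\sin(r\sin\theta+\theta)}{\sin\theta}.
\]
The lemma is then equivalent to the uniform lower bound $H(r,\theta)\geq C^{-1}\min(r^2,1)$ for all $r>0$ and $0<\theta<\pi/4$. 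I will split this into a small-$r$ estimate and a large-$r$ estimate.

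For $r\leq 1$, I would return to the integral form. Since $s\sin\theta\leq\sin(\pi/4)=\sqrt{2}/2<\pi/2$ on $[0,r]$, one has $\sin(s\sin\theta)\geq (2/\pi)s\sin\theta$, and $e^{-s\cos\theta}\geq e^{-1}$ on the same interval. Integrating yields $H(r,\theta)\geq r^2/(\pi e)$, which is precisely the desired bound in this regime.

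For $r\geq 1$, the crucial observation is the sign of $\partial_r H=e^{-r\cos\theta}\sin(r\sin\theta)/\sin\theta$: it is nonnegative on $[2n\pi/\sin\theta,(2n+1)\pi/\sin\theta]$ and nonpositive on the complementary intervals. Since $\pi/\sin\theta\geq\pi>1$, on the first monotone arc $[1,\pi/\sin\theta]$ we have $H(r,\theta)\geq H(1,\theta)\geq 1/(\pi e)$ by the previous step. For $r\geq\pi/\sin\theta$, $H$ oscillates around $1$ with local minima at $r=2n\pi/\sin\theta$ ($n\geq 1$) equal to $1-e^{-2n\pi\cot\theta}$, and since $\cot\theta\geq 1$ on $(0,\pi/4]$ these are all $\geq 1-e^{-2\pi}$. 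Combining the two ranges gives $H(r,\theta)\gtrsim 1$ for $r\geq 1$, which completes the proof.

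The main obstacle is the large-$r$ regime, where a priori $\sin(r\sin\theta+\theta)/\sin\theta$ can be as large as $1/\sin\theta$ (hence potentially much larger than $1$), so the naive bound $|\sin(\cdot)|\leq 1$ is too crude when $\theta$ is small. The trick that makes everything work is the closed-form identity for $H$: the critical points of $H$ occur exactly at the zeros of $\sin(r\sin\theta)$, and there the numerator reduces to $\pm\sin\theta\cdot e^{-r\cos\theta}$, which is uniformly bounded (by $e^{-2\pi}$ at the first dangerous minimum) because $\cot\theta\geq 1$ under the assumption $\theta<\pi/4$. It is precisely this restriction on $\theta$ that makes the exponential damping strong enough to dominate the oscillation amplitude.
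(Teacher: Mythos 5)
Your proof is correct, and it arrives at exactly the same closed-form identity as the paper: $-\operatorname{Im}F_1(re^{i\theta})=\frac{1}{r}\bigl(\sin\theta-e^{-r\cos\theta}\sin(r\sin\theta+\theta)\bigr)$, which the paper calls $F_5(r,\theta)/r$. Where you diverge is in how this quantity is bounded from below. The paper does it in one stroke: the triangle inequality $|\sin(r\sin\theta+\theta)|\leq |\sin(r\sin\theta)|\cos\theta+|\cos(r\sin\theta)|\sin\theta\leq \sin\theta(1+r\cos\theta)$ gives $F_5\geq\sin\theta\bigl(1-e^{-r\cos\theta}(1+r\cos\theta)\bigr)$, and the lemma follows from the elementary scalar fact $1-e^{-x}(1+x)\gtrsim\min(x^2,1)$ applied with $x=r\cos\theta\geq r/\sqrt{2}$. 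Your route instead bounds $H=F_5/\sin\theta$ by a case split: for $r\leq 1$ a Jordan-type inequality on the integral representation, and for $r\geq 1$ a sign analysis of $\partial_r H\propto e^{-r\cos\theta}\sin(r\sin\theta)$ that locates the local minima of $H$ at $r=2n\pi/\sin\theta$, where the oscillating term collapses to $e^{-2n\pi\cot\theta}$ and the restriction $\theta<\pi/4$ gives $\cot\theta\geq 1$, hence a uniform lower bound. Both arguments use the hypothesis $\theta<\pi/4$ in the same essential way (keeping the exponential damping non-degenerate as $\theta\to 0$); the paper's is shorter because it sidesteps the monotonicity analysis with a single pointwise bound, while yours is a self-contained alternative that some readers may find more transparent, since it explains structurally why the seemingly dangerous $1/\sin\theta$ amplification near the oscillation extrema is harmless.
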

\begin{proof}
Thanks to
\begin{align*}
F_1(r^{i\theta})=\f{1-e^{-re^{i\theta}}}{re^{i\theta}}=\f{e^{-i\theta}-e^{-re^{i\theta}-i\theta}}{r},
\end{align*}
we have
\begin{align*}
-{\rm Im}F_1(r^{i\theta})=\f{F_5(r,\theta)}{r},
\end{align*}
where
\begin{align*}
F_5(r,\theta)&=-{\rm Im}(e^{-i\theta}-e^{-re^{i\theta}-i\theta})\\
&=\sin\theta-e^{-r\cos{\theta}}\sin(r\sin{\theta}+\theta).
\end{align*}
Using the inequality
\begin{align*}
|\sin(r\sin{\theta}+\theta)|& \leq |\sin(r\sin{\theta})\cos\theta|+|\cos(r\sin{\theta})\sin\theta|\\
&\leq r\sin{\theta}\cos\theta+\sin\theta,
\end{align*}
we get
\begin{align}
F_5(r,\theta)\geq \sin\theta(1-e^{-r\cos{\theta}}(1+r\cos{\theta})).\label{eq:F5-est1}
\end{align}
This shows that
\begin{align*}
F_5(r,\theta)\geq C^{-1}\min((r\cos{\theta})^2,1)\sin\theta\geq C^{-1}\min(r^2,1)\sin\theta,
 \end{align*}
thus,
\begin{align*}
 -{\rm Im F_1}(r^{i\theta})=\f{F_5(r,\theta)}{r}\geq C^{-1}\min(r,\f{1}{r})\sin\theta.
\end{align*}
This completes the proof.
\end{proof}

Now we are in a position to prove Theorem \ref{thm:spectral}.\smallskip

Let us first consider the case of $|k|=1.$ It follows from Lemma \ref{lem:coer-A1} that
\begin{align*}
{\rm Re}\langle\widetilde{\mathcal{H}}_{\alpha,k}w,w\rangle\geq \f{\|w\|^2}{2},
\end{align*}
which gives
\begin{align}
\Sigma(\alpha,k)\geq 1/2.\label{eq:S1}
\end{align}

For $ \theta\in(-\pi/8,\pi/8),\ \text{sgn} \theta=\text{sgn} \b_k$, we have
 \begin{align*}
 {\rm Im} F_1(\f{r^2e^{2i\theta}}{4})=\text{sgn} \theta{ \rm Im} F_1(\f{r^2e^{2i|\theta|}}{4}),
 \end{align*}
from which and Lemma \ref{lem:F1-est}, we infer that
\begin{align*}
{\rm Re}\langle \widetilde{\mathcal{H}}_{\alpha, {k}}^{(i\theta)}w,w\rangle=&\int_{\R_+}{\rm Re}\left(\dfrac{35e^{-2i\theta}}{4r^2}
+\dfrac{r^2e^{2i\theta}}{16}-\dfrac{1}{2}+F_4(\f{r^2e^{2i\theta}}{4})+i\b_kF_1(\f{r^2e^{2i\theta}}{4})\right)|w|^2dr+\cos{2\theta}\|\partial_rw\|^2\\
 \geq&\int_{\R_+}\left(\dfrac{35\cos (2\theta)}{4r^2}+\dfrac{r^2\cos (2\theta)}{16}
-\dfrac{1}{2}-C-|\beta_k|{\rm Im} F_1(\f{r^2e^{2i|\theta|}}{4})\right)|w|^2dr\\
\geq&\int_{\R_+}\left(C^{-1}(\f{1}{r^{2}}+r^2)-C+|\beta_k|C^{-1}\sin|\theta|\min(r^2,\f{1}{r^2})\right)|w|^2dr\\ \geq&\int_{\R_+}\left(C^{-1}|\beta_k\sin\theta|^{\frac{1}{2}}-C\right)|w|^2dr=\left(C^{-1}|\b_k\sin\theta|^{\frac{1}{2}}-C\right)\|w\|^2,
\end{align*}
which shows that
\begin{align}
\Sigma(\alpha,k)\geq C^{-1}|\beta_k\sin\theta|^{\frac{1}{2}}-C\ge C^{-1}|\beta_k|^{\frac{1}{2}}-C,\nonumber
\end{align}
 if we take $\theta=(\text{sgn} \beta_k)\f{\pi}{12}$. Then by \eqref{eq:S1}, we get
\begin{align*}
\Sigma(\alpha,k)\geq \max(C^{-1}|\beta_k|^{\frac{1}{2}}-C,1/2)\geq C^{-1}|\beta_k|^{\frac{1}{2}}.
\end{align*}

Next we consider the case of $|k|\geq 2.$ We still assume $\theta\in(-\f{\pi}{8},\f{\pi}{8}),\ \text{sgn}\theta=\text{sgn} \beta_k$. Then we have
\begin{align*}
{\rm Re}\langle \widetilde{\mathcal{H}}_{\alpha,k}^{(i\theta)}w,w\rangle=&\cos{2\theta}\|\partial_rw\|_{L^2}^2+\int_{\R_+}{\rm Re}\left(\dfrac{k^2-1/4}{r^2e^{-2i\theta}}
+\dfrac{r^2e^{2i\theta}}{16}-\dfrac{1}{2}+i\beta_kF_1(\f{r^2e^{2i\theta}}{4})\right)|w|^2dr\\
&+\int_{\R_+}\int_{\R_+}K_k(r,s){\rm Re}(-i\beta_ke^{2i\theta}F_2(\f{r^2e^{2i\theta}}{4})F_2(s^2e^{2i\theta}/4))\overline{w(s)}w(r)drds,
\end{align*}
here $K_k(r,s)=\dfrac{1}{2|k|}\min\left(\dfrac{r}{s},\dfrac{s}{r}\right)^{|k|}(rs)^{\frac{1}{2}}.$ Notice that
\begin{align*}
&{\rm Re}\Big(-i\b_ke^{2i\theta}F_2(\f{r^2e^{2i\theta}}{4})F_2(\f{s^2e^{2i\theta}}{4})\Big)={\rm Re}\big(-i\b_ke^{2i\theta}e^{-(r^2+s^2)\f{e^{2i\theta}}{8}}\big)\\
&=\beta_ke^{-(r^2+s^2)\f{\cos(2\theta)}{8}}\sin(2\theta-\f{(r^2+s^2)\sin(2\theta)}{8})\\
&=\frac{\beta_k}{\sin(2\theta)}
e^{-(r^2+s^2)\f{\cos(2\theta)}{8}}\Big(\sin(2\theta-\f{r^2\sin(2\theta)}{8})\sin(2\theta-\f{s^2\sin(2\theta)}{8})\\
&\quad-\sin(\f{r^2\sin(2\theta)}{8})\sin(\f{s^2\sin(2\theta)}{8})\Big)\\
&=\frac{|\beta_k|}{|\sin(2\theta)|}(g_2(r)g_2(s)-g_3(r)g_3(s)),
\end{align*}
where
\begin{align*}
g_2(r)=e^{-r^2\f{\cos(2\theta)}{8}}\sin\Big(2\theta-\f{r^2\sin(2\theta)}{8}\Big),\quad g_3(r)=e^{-r^2\f{\cos(2\theta)}{8}}\sin\Big(\f{r^2\sin(2\theta)}{8}\Big),
\end{align*}
and here we used the fact that
\begin{align*}
\sin(a-b-c)\sin a=\sin(a-b)\sin (a-c)-\sin b \sin c.
\end{align*}
Thus, we obtain
 \begin{align*}
{\rm Re}\langle \widetilde{\mathcal{H}}_{\alpha,k}^{(i\theta)}w,w\rangle=&\cos{2\theta}\|\partial_rw\|_{L^2}^2
+\int_{\R_+}\left(\dfrac{k^2-1/4}{r^2}\cos(2\theta)+\dfrac{r^2}{16}\cos(2\theta)-\dfrac{1}{2}-|\b_k|\Im F_1(\f{r^2e^{2i|\theta|}}{4})\right)|w|^2dr\\
&+\frac{|\b_k|}{|\sin(2\theta)|}\big(\langle\widetilde{\mathcal{K}}_k[g_2w],g_2w\rangle-\langle\widetilde{\mathcal{K}}_k[g_3w],g_3w\rangle\big).
\end{align*}
By the proof of Lemma \ref{lem:HktoLk}, we know that
\begin{align*}
\langle\widetilde{\mathcal{K}}_k[g_2w],g_2w\rangle\geq 0.
\end{align*}
Due to $0<K_k(r,s)\leq K_2(r,s)$, we have
\begin{align*}
\langle \widetilde{\mathcal{K}}_k[g_3w],g_3w\rangle&\leq\int_0^{+\infty}\int_0^{+\infty}
K_k(r,s)|g_3w(s)||g_3w(r)|dsdr\\
&\leq\frac{1}{2}\int_0^{+\infty}\int_0^{+\infty}K_2(r,s)\left(\left(\frac{r}{s}\right)^{\frac{1}{2}}g_3(r)^2|w(s)|^2
+\left(\frac{s}{r}\right)^{\frac{1}{2}}g_3(s)^2|w(r)|^2\right)dsdr\\
&=\int_0^{+\infty}\widetilde{\mathcal{K}}_2[r^{\frac{1}{2}}g_3^2](s)\frac{|w(s)|^2}{s^{\frac{1}{2}}}ds.
\end{align*}
Therefore, we obtain
\begin{align*}
{\rm Re}\langle \widetilde{\mathcal{H}}_{\alpha,k}^{(i\theta)}w,w\rangle\geq&\int_{\R_+}\left(\dfrac{k^2-1/4}{r^2}\cos(2\theta)+\dfrac{r^2}{16}\cos(2\theta)-\dfrac{1}{2}\right)|w|^2dr\\
&+|\beta_k|\int_{\R_+}\left(-{\rm Im}F_1(\f{r^2e^{2i|\theta|}}{4})-\f{\widetilde{\mathcal{K}}_2[r^{\frac{1}{2}}g_3^2]}{|\sin(2\theta)|r^{\frac{1}{2}}}\right)|w|^2dr.
\end{align*}

Due to $0\leq g^2_3(r)\leq e^{-r^2\f{\cos(2\theta)}{4}}|\f{r^2\sin(2\theta)}{8}|^2,$ we have
\begin{align*}
 \frac{\widetilde{\mathcal{K}}_2[r^{\frac{1}{2}}g_3^2]}{|\sin(2\theta)|r^{\frac{1}{2}}}\leq&\frac{|\sin(2\theta)|}{64r^{\frac{1}{2}}}
 \widetilde{\mathcal{K}}_2[r^{\frac{9}{2}}e^{-r^2\f{\cos(2\theta)}{4}}]\\
=&\frac{|\sin(2\theta)|}{64r^{\frac{1}{2}}}\frac{1}{4}\left(\int_0^r\left(\frac{s}{r}\right)^2(rs)^{\frac{1}{2}}
s^{\frac{9}{2}}e^{-s^2\f{\cos(2\theta)}{4}}ds+\int_r^{+\infty}\left(\frac{r}{s}\right)^2(rs)^{\frac{1}{2}}
s^{\frac{9}{2}}e^{-s^2\f{\cos(2\theta)}{4}}ds\right)\\
=&\frac{|\sin(2\theta)|}{256r^{2}}\left(\int_0^rs^7
e^{-s^2\cos(2\theta)/4}ds+r^4\int_r^{+\infty}
s^{3}e^{-s^2\cos(2\theta)/4}ds\right)\\
=&\frac{|\sin(2\theta)|}{2r^{2}|\cos(2\theta)|^4}\left(\int_0^a\rho^3
e^{-\rho}d\rho+r^4\left(\frac{\cos(2\theta)}{4}\right)^2\int_a^{+\infty}
\rho e^{-\rho}d\rho\right)\\
=&\frac{|\sin(2\theta)|}{2r^{2}|\cos(2\theta)|^4}\Big(6-(6+6a+3a^2+a^3)
e^{-a}+a^2(1+a)e^{-a}\Big)\\=&\frac{|\sin(2\theta)|\left(3-(3+3a+a^2)
e^{-a}\right)}{r^{2}|\cos(2\theta)|^4}.
\end{align*}
here $a=\f{r^2\cos(2\theta)}{4}$ and we used the change of variable $\rho=\f{s^2\cos(2\theta)}{4}.$  On the other hand, thanks to ${\rm -Im}F_1(\f{r^2e^{2i|\theta|}}{4})=\f{F_5(\f{r^2}{4},2|\theta|)}{r^2/4}$, we get by \eqref{eq:F5-est1} that
\begin{align*}
-{\rm Im}F_1(\f{r^2e^{2i|\theta|}}{4})\geq& |\sin(2\theta)|\f{1-e^{-r^2\f{\cos{ 2\theta}}{4}}(1+r^2\f{\cos{2\theta}}{4})}{r^2/4}\\
=&4|\sin(2\theta)|\f{1-e^{-a}(1+a)}{r^2}.
 \end{align*}
Then we have
\begin{align*}
\frac{\widetilde{\mathcal{K}}_2[r^{\frac{1}{2}}g_3^2]}
{|\sin(2\theta)|r^{\frac{1}{2}}}\leq \frac{-3{\rm Im}F_1(\f{r^2e^{2i|\theta|}}{4})}{4|\cos(2\theta)|^4}.
\end{align*}
Now we take $ \theta=(\text{sgn} \b_k)\f{\pi}{24}$, then we have $|\cos(2\theta)|^4> 3/4 $, and
\begin{align*}
\dfrac{k^2-1/4}{r^2}\cos(2\theta)+\dfrac{r^2}{16}\cos(2\theta)-\dfrac{1}{2}\geq C^{-1}\big(\f{1}{r^2}+r^2\big).
\end{align*}
Then we conclude that
\begin{align*}
{\rm Re}\langle\widetilde{\mathcal{H}}_{\alpha,k}^{(i\theta)}w,w\rangle\geq& C^{-1}\int_{\R_+}\left(\dfrac{1}{r^2}+{r^2}\right)|w|^2dr\\
&+|\beta_k|\int_{\R_+}\left(-{\rm Im}F_1(\f{r^2e^{2i|\theta|}}{4})+\frac{3 {\rm Im}F_1(\f{r^2e^{2i|\theta|}}{4})}{4|\cos(2\theta)|^4}\right)|w|^2dr\\
\geq& C^{-1}\int_{\R_+}\left(\dfrac{1}{r^2}+{r^2}-|\beta_k|{\rm Im}F_1(\f{r^2e^{2i|\theta|}}{4})\right)|w|^2dr\\
\geq& C^{-1}\int_{\R_+}\left(\dfrac{1}{r^2}+{r^2}+|\beta_k||\sin(2\theta)|\min(r^2,\f{1}{r^2})\right)|w|^2dr \\
\geq& C^{-1}\int_{\R_+}|\beta_k|^{\frac{1}{2}}|w|^2dr= C^{-1}|\beta_k|^{\frac{1}{2}}\|w\|^2,
\end{align*}
which shows that for $|k|\ge 2$,
\beno
\Sigma(\alpha,k)\geq C^{-1}|\b_k|^{\frac{1}{2}}.
\eeno

\section{Appendix}

In this appendix, let us present some properties of the function $\sigma(r)=\f{1-e^{-r^2/4}}{r^2/4}$.

\begin{lemma}\label{lem:sigma}
It holds that
\begin{itemize}

\item[1.] for any $r_0>0$,
\begin{align*}
&|\sigma'(r)|\sim|\sigma'(r_0)|,\quad\f{r_0}{2}\leq r\leq 2r_0,\\
&|\sigma(r)-\sigma(r_0)|\gtrsim |r-r_0||\sigma'(r_0)|,\quad0<r\leq2r_0;
\end{align*}

\item[2.] for $0<r_0<1$ and ${r_0/2}< r\leq 2r_0+1$,
\begin{align*}
&|\sigma'(r)|\gtrsim|\sigma'(r_0)|,\quad |\sigma(r)-\sigma(r_0)|\gtrsim |r-r_0||\sigma'(r_0)|;
\end{align*}

\item[3.] for $r_0\ge 1$ and $|r- r_0|\geq\f{1}{r_0}$,
\begin{align*}
|\sigma(r)-\sigma(r_0)|\gtrsim \f{1}{(1+r)^4}.
\end{align*}
\end{itemize}
Here $a \sim b$ means $ca\le a \le c^{-1}b$ and $a \gtrsim b$ means $a\le Cb$, where $c$ and $C$ are constants
independent of $r_0$.
\end{lemma}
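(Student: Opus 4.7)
The plan is to first derive the global pointwise equivalence $|\sigma'(r)| \sim \min(r,\,r^{-3})$ valid for all $r > 0$, and then to deduce all three items by elementary case analysis combining the mean value theorem with direct integration of this asymptotic.

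Writing $\sigma(r) = F_1(r^2/4)$ with $F_1(u) = (1-e^{-u})/u$, a direct computation gives
\[
\sigma'(r) = -\frac{8}{r^3}\,\phi(r^2/4), \qquad \phi(u) := 1 - (1+u)e^{-u}.
\]
Since $\phi(0) = 0$ and $\phi'(u) = ue^{-u} > 0$, the function $\phi$ is positive and strictly increasing on $(0, \infty)$, with $\phi(u) \sim u^2/2$ as $u \to 0^+$ and $\phi(u) \to 1$ as $u \to \infty$; hence $\phi(u) \sim \min(u^2,\,1)$, which yields the claimed equivalence for $|\sigma'|$ and, as a byproduct, the strict monotonicity $\sigma' < 0$.

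Items 1(a) and 1(b) follow quickly: 1(a) holds because $\min(r,\,r^{-3})$ is preserved up to constants under $r \mapsto \lambda r$ with $\lambda \in [1/2, 2]$; for 1(b), if $r \in [r_0/2, 2r_0]$ apply the mean value theorem together with 1(a), while if $0 < r < r_0/2$ estimate
\[
\sigma(r) - \sigma(r_0) \ge \int_{r_0/2}^{r_0}|\sigma'(s)|\,ds \gtrsim r_0\,|\sigma'(r_0)| \gtrsim |r-r_0|\,|\sigma'(r_0)|,
\]
using $|r - r_0| \le r_0$. Item 2 follows from the same template: for $r_0 < 1$ one has $|\sigma'(r_0)| \sim r_0$, and on the range $(r_0/2, 2r_0+1) \subset (0, 3)$ the quantity $\min(r,\,r^{-3})$ is bounded below by a constant multiple of $r_0$ (split at $r = 1$), giving the pointwise bound $|\sigma'(r)| \gtrsim |\sigma'(r_0)|$; the $|\sigma(r) - \sigma(r_0)|$ inequality then follows from MVT.

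Item 3 is the main obstacle and requires several sub-cases, exploiting $|\sigma'(r_0)| \sim r_0^{-3}$ for $r_0 \ge 1$. When $r_0/2 \le r \le 2r_0$ with $|r - r_0| \ge 1/r_0$, item 1(b) gives $|\sigma(r) - \sigma(r_0)| \gtrsim 1/r_0^4 \gtrsim 1/(1+r)^4$ (since $1 + r \lesssim r_0$). When $r > 2r_0$, integrating over $[r_0, 2r_0]$ yields $\sigma(r_0) - \sigma(r) \gtrsim r_0 \cdot r_0^{-3} = r_0^{-2}$, which dominates $1/(1+r)^4 \le 1/(1+2r_0)^4 \lesssim r_0^{-4}$. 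The delicate range is $r \le r_0/2$, where one computes explicitly
\[
\sigma(r_0/2) - \sigma(r_0) = \frac{4}{r_0^2}\bigl(3 - 4e^{-a} + e^{-4a}\bigr), \qquad a := r_0^2/16,
\]
and verifies that the bracket $P(a)$ is positive, increasing (since $P'(a) = 4e^{-a}(1 - e^{-3a}) > 0$), and tends to $3$, so $P(a) \gtrsim 1$ uniformly for $r_0 \ge 1$; this yields $\sigma(r) - \sigma(r_0) \gtrsim r_0^{-2}$. The bound suffices whenever $(1+r)^4 \gtrsim r_0^2$, and in the complementary subregime ($r$ bounded with $r_0$ large) one instead extracts a uniform positive lower bound on $\sigma(r) - \sigma(r_0)$ directly from $\sigma(r) \ge \sigma(1) > 0$ and $\sigma(r_0) \le 4/r_0^2 \to 0$, handling the remaining bounded range $r_0 \in [1, M]$ by compactness and strict monotonicity of $\sigma$. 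The essential difficulty is matching the three scales $r_0^{-4}$, $r_0^{-2}$ and $O(1)$, each of which is the correct bound in its own regime.
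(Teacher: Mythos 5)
Your derivation of the global asymptotic $|\sigma'(r)|\sim\min(r,r^{-3})$ via $\sigma'(r)=-\tfrac{8}{r^3}\phi(r^2/4)$ with $\phi(u)=1-(1+u)e^{-u}$ is correct and is essentially the same formula the paper uses; items 1 and 2, which you derive by the mean-value theorem plus monotonicity, match the paper's argument.

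For item 3 you chose a different case decomposition than the paper (you split at $r_0/2$ and $2r_0$, whereas the paper splits at $r_0-1$ and $r_0+1/r_0$), and your handling of $r\le r_0/2$ has a genuine gap. After establishing $\sigma(r)-\sigma(r_0)\ge\sigma(r_0/2)-\sigma(r_0)\gtrsim r_0^{-2}$, you observe correctly that this suffices when $(1+r)^4\gtrsim r_0^2$. You then characterize the complementary regime $(1+r)^2\lesssim r_0$ as ``$r$ bounded with $r_0$ large'' and propose to use $\sigma(r)\ge\sigma(1)>0$. Both parts of this are wrong: the constraint $(1+r)^2\lesssim r_0$ allows $r$ to range up to roughly $\sqrt{r_0}-1$, which is unbounded as $r_0\to\infty$; and since $\sigma$ is decreasing, the bound $\sigma(r)\ge\sigma(1)$ holds only for $r\le 1$. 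So for $1\lesssim r\lesssim\sqrt{r_0}$ with $r_0$ large, your argument produces nothing. The inequality does hold there (because $\sigma(r)\sim 4/r^2\gg 1/(1+r)^4$ dominates $\sigma(r_0)$), but you would need to argue it, and the natural way to do so is precisely what the paper does: for $0<r\le r_0-1$ compare $\sigma(r)$ with $\sigma(r+1)$ rather than with $\sigma(r_0/2)$, giving $\sigma(r)-\sigma(r_0)\ge\sigma(r)-\sigma(r+1)\gtrsim|\sigma'(r+1)|\sim(1+r)^{-3}\ge(1+r)^{-4}$. This single inequality covers your ``complementary subregime'' uniformly and makes the explicit computation of $\sigma(r_0/2)-\sigma(r_0)$ and the compactness appeal unnecessary. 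I'd recommend replacing your sub-case (c) with the paper's one-line comparison against $r+1$.
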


\begin{proof}
Let us prove the first property. Thanks to $\sigma'(r)=\f{2}{r}(e^{-\f{r^2}{4}}-\f{1-e^{-\f{r^2}{4}}}{r^2/4}),$ we have
\begin{align}
|\sigma'(r)|\sim \min\big(r,\f{1}{r^3}\big),\label{eq:sigma-est1}
\end{align}
which shows that if $r_0\geq1$, i.e $r\geq\f12$, then
\begin{align*}
|\sigma'(r)|\sim \f{1}{r^3}\sim\f{1}{r_0^3}\sim|\sigma'(r_0)|,
\end{align*}
and if $r_0\leq1$, i.e $r\leq2$, then
\begin{align*}
|\sigma'(r)|\sim r\sim r_0\sim |\sigma'(r_0)|.
\end{align*}
Thus, $|\sigma'(r)|\sim|\sigma'(r_0)|$ for $\f{r_0}{2}\leq r\leq 2r_0$. If $\f{r_0}{2}\leq r\leq 2r_0$, $|\sigma'(\theta r+(1-\theta)r_0)|\sim|\sigma'(r_0)|(0\leq\theta\leq1)$. Thus, for some $\theta\in(0,1)$,
\begin{align*}
|\sigma(r)-\sigma(r_0)|=|r-r_0||\sigma'(\theta r+(1-\theta)r_0)|\sim|r-r_0||\sigma'(r_0)|.
\end{align*}
While, if $0<r\leq\f{r_0}{2}$, we get by $\sigma'(r)<0$ that
\begin{align*}
|\sigma(r)-\sigma(r_0)|\geq \sigma(\f{r_0}{2})-\sigma(r_0)\sim \f{r_0}{2}|\sigma'(r_0)|\sim|r-r_0||\sigma'(r_0)|.
\end{align*}

The second property could be proved similarly.

Now we prove the third property. If $r_0\geq1$ and $r\geq r_0+\f{1}{r_0}$, we get by \eqref{eq:sigma-est1} that
\begin{align*}
|\sigma(r)-\sigma(r_0)|\geq\sigma(r_0)-\sigma(r_0+\f{1}{r_0})\gtrsim \f{1}{r_0}|\sigma'(r_0)|\sim\f{1}{r_0^4}\gtrsim \f{1}{(1+r)^4},
\end{align*}
and if $r_0\geq1$, $0<r\leq r_0-1$, then
\begin{align*}
|\sigma(r)-\sigma(r_0)|\geq \sigma(r)-\sigma(r+1)\gtrsim|\sigma'(r+1)|\gtrsim\f{1}{(1+r)^4},
\end{align*}
and if $r_0\geq1$, $r_0-1<r\leq r_0-\f{1}{r_0}$, then
\begin{align*}
|\sigma(r)-\sigma(r_0)|\geq\sigma(r_0-\f{1}{r_0})-\sigma(r_0)\gtrsim \f{1}{r_0}|\sigma'(r_0)|\sim\f{1}{r_0^4}\gtrsim \f{1}{(1+r)^4}.
\end{align*}
This shows the third property of $\sigma(r)$.
\end{proof}

\begin{lemma}\label{lem:beta-med}
Let $0<\nu_1<1$ and $\sigma(r_1)=\nu_1$. There exist constants {$c_i\sim1(i=1,\cdots,4)$}, such that for any $r>0$, we have
\begin{itemize}

\item[1.] if $r_1\leq1$ and $1\leq|\beta_1|\leq\f{1}{r_1^4}$, then
\beno
 c_1\f{1}{r^2}+c_2|\beta_1|(\nu_1-\sigma(r))\geq |\beta_1|^{\f12};
\eeno

\item[2.] if  $r_1\geq1$ and $1\leq|\beta_1|\leq r_1^4$, then
\begin{align*}
c_3(1+r^2)+c_4|\beta_1|(\sigma(r)-\nu_1)\geq |\beta_1|^{\f12};
\end{align*}

\item[3.] if  $r_1\geq 1$ and $r_1^4\leq|\beta_1|\leq r_1^6$, then
\begin{align*}
c_3(1+r^2)+c_4{r_1^4}(\sigma(r)-\nu_1)\geq |\beta_1|^{\f13}.
\end{align*}

\end{itemize}

\end{lemma}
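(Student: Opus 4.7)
The plan is to verify each of the three cases by splitting $\mathbb{R}_+$ into a finite number of sub-regions based on the position of $r$ relative to $r_1$ (and, when needed, relative to $1$), applying AM-GM in regions where both terms contribute with the favorable sign, and absorbing the wrong-sign contribution directly into the polynomial term elsewhere. The common structural input is the relation between the target and $r_1$: $|\beta_1|^{1/2}\le 1/r_1^2$ in Case~1, $|\beta_1|^{1/2}\le r_1^2$ in Case~2, and $|\beta_1|^{1/3}\le r_1^2$ in Case~3. Pointwise control on $|\sigma-\nu_1|$ is supplied by Lemma~\ref{lem:sigma} together with the expansion $\sigma(r)=1-r^2/8+O(r^4)$ near $0$ and the decay $\sigma(r)\sim 4/r^2$ at infinity.

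For Case~1, since $\sigma$ is strictly decreasing, $\nu_1-\sigma(r)$ has the same sign as $r-r_1$. First I would treat $0<r\le r_1$, where the negative contribution satisfies $|c_2|\beta_1|(\nu_1-\sigma(r))|\le c_2|\beta_1|(1-\nu_1)\lesssim c_2|\beta_1|r_1^2\le c_2/r_1^2$, absorbed by $c_1/r^2\ge c_1/r_1^2\ge c_1|\beta_1|^{1/2}$ once $c_1$ is large enough relative to $c_2$; then $r_1\le r\le|\beta_1|^{-1/4}$, where $c_1/r^2\ge c_1|\beta_1|^{1/2}$ alone suffices; then $|\beta_1|^{-1/4}\le r\le 1$, where Lemma~\ref{lem:sigma} gives $\nu_1-\sigma(r)\gtrsim r^2-r_1^2\gtrsim r^2$ and AM-GM on $c_1/r^2$ and $c_2|\beta_1|r^2$ yields $\gtrsim|\beta_1|^{1/2}$; and finally $r\ge\max(1,|\beta_1|^{-1/4})$, where $\nu_1-\sigma(r)$ is of constant order, with a dichotomy around $r_1$ close to $1$ handled by direct comparison.

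Cases~2 and~3 follow the same template but with $c_3(1+r^2)$ replacing $c_1/r^2$. On $r\ge r_1/2$, $c_3(1+r^2)\gtrsim c_3 r_1^2$ already exceeds $|\beta_1|^{1/2}$ (resp.\ $|\beta_1|^{1/3}$), and the wrong-sign contribution on $r\ge r_1$ is bounded by $|\sigma-\nu_1|\lesssim 1/r_1^2$ near $r_1$ and by $\nu_1\lesssim 1/r_1^2$ for $r\ge 2r_1$, both absorbed into a small fraction of $c_3 r^2$. On $1\le r\le r_1/2$, integrating $|\sigma'(t)|\sim t^{-3}$ gives $\sigma(r)-\nu_1\gtrsim 1/r^2$, so AM-GM on $c_3 r^2$ and $c_4|\beta_1|/r^2$ gives $\gtrsim|\beta_1|^{1/2}$ in Case~2, and on $r^2$ and $c_4 r_1^4/r^2$ gives $\gtrsim r_1^2\ge|\beta_1|^{1/3}$ in Case~3. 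On $0<r\le\min(1,r_1/2)$ (relevant only when $r_1\ge 2$), $\sigma(r)-\nu_1\ge\sigma(1)-\sigma(2)\gtrsim 1$ makes the coefficient term dominate; the residual range $r_1\in[1,2]$ is absorbed by $c_3(1+r^2)\ge c_3$ exceeding the bounded target.

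The main obstacle is bookkeeping the constants $c_i\sim 1$ consistently across all sub-regions, especially the transitions near $r=1$ or $r_1=1$ where the two asymptotic regimes $|\sigma'(r)|\sim r$ and $|\sigma'(r)|\sim 1/r^3$ meet. Since only finitely many sub-regions are involved and each reduces to a $\gtrsim 1$ algebraic inequality, explicit constants can be produced once one verifies that the losses from absorbing the wrong-sign terms are strictly less than the gains in the AM-GM sub-regions; this is a routine but delicate exercise in tracking universal constants through Lemma~\ref{lem:sigma}.
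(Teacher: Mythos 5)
Your proposal is sound in spirit but takes a genuinely different, more pedestrian route than the paper. The paper's proof of Case~1 does not split $\mathbb{R}_+$ into regions at all: it writes $F(r)=c_1/r^2+c_2|\beta_1|(\nu_1-\sigma(r))$, fixes the ratio $c_1/c_2$ by demanding $F'(r_0)=0$ at $r_0=|\beta_1|^{-1/4}$ (which forces $c_1\sim c_2$ because $|\sigma'(r_0)|\sim r_0$), and then uses the monotonicity of $-r^3\sigma'(r)$ (from $-(r^3\sigma')'=r^3g^2>0$) to conclude that $r_0$ is the global minimizer; since $r_1\le r_0$ the $\sigma$-term is nonnegative there, so $\min F = F(r_0)\ge c_1|\beta_1|^{1/2}$, with no case analysis. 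For Cases~2 and~3 the paper likewise avoids region-splitting: it lower-bounds $\sigma(r)\ge 1/(C(1+r^2))$ globally and $\nu_1\le C/r_1^2$ (using $r_1\ge 1$), then applies AM-GM once on the whole line and chooses $c_3,c_4$ so that $C^{-1}(c_3c_4)^{1/2}-Cc_4=1$. Your regional decomposition achieves the same conclusion but at the cost of more bookkeeping, which you acknowledge.

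One concrete repair is needed in your Case~1 on the sub-region $|\beta_1|^{-1/4}\le r\le 1$: the chain $\nu_1-\sigma(r)\gtrsim r^2-r_1^2\gtrsim r^2$ is not uniform, since $r_1$ can equal $|\beta_1|^{-1/4}$ and $r$ can sit right at that left endpoint, making $r^2-r_1^2$ vanish. You should further split this interval at $2r_1$: on $|\beta_1|^{-1/4}\le r\le 2r_1$ the term $c_1/r^2\ge c_1/(4r_1^2)\ge (c_1/4)|\beta_1|^{1/2}$ alone suffices, while on $r\ge\max(|\beta_1|^{-1/4},2r_1)$ you genuinely have $r^2-r_1^2\ge\tfrac34 r^2$. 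With that patch, and analogous care at the $r_1\approx 1$ transition where the two asymptotics of $\sigma'$ meet (your "residual range" remark), the argument closes. Still, you may find it cleaner to adopt the paper's calculus argument for Case~1 and the global AM-GM for Cases~2--3, since both eliminate the region-splitting entirely.
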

\begin{proof}
We consider the first case.
Let $F(r)=c_1\f{1}{r^2}+c_2|\beta_1|(\sigma(r_1)-\sigma(r)),\ r_0=|\beta_1|^{-\f14}$. Then we have $r_1\leq r_0\leq 1$ and
\begin{align*}
F'(r)=-c_2|\beta_1|\sigma'(r)-\f{2c_1}{r^3}.
\end{align*}
If we choose $2c_1=-c_2|\beta_1|\sigma'(r_0)r_0^3,$ due to $-\sigma'(r)\sim\min(r,\f{1}{r^3})$,  we have $ c_1\sim c_2$ and $F'(r_0)=0.$ As $-(r^3\sigma'(r))'=r^3g^2(r)>0$, we conclude that
\begin{align*}
F'(r)<0\ \ \text{for}\ \ \ 0<r<r_0,\ \ F'(r)>0\ \ \text{for}\ \ \ r>r_0,
\end{align*}
which imply that
\begin{align*}
\min\limits_{r>0}F(r)= F\big(r_0\big)\geq \f{c_1}{r_0^2}=c_1|\beta_1|^{\f12}.
\end{align*}
That is, for $c_1= 1$ we have,
\beno
F(r)\geq |\beta_1|^\f12.
\eeno

Next we consider the second case. Let $G(r)=c_3(1+r^2)+c_4|\beta_1|(\sigma(r)-\sigma(r_1))$. Then
\begin{align*}
G(r)\geq c_3(1+r^2)+\frac{c_4|\beta_1|}{C(1+r^2)}-\frac{Cc_4|\beta_1|}{r_1^2}
\geq C^{-1}(c_3c_4|\beta_1|)^{\frac{1}{2}}-\frac{Cc_4|\beta_1|}{|\beta_1|^{\frac{1}{2}}}.
\end{align*}
We can choose constants $c_3,c_4>0$ such that $C^{-1}(c_3c_4)^{\frac{1}{2}}-Cc_4=1 $. Then
\begin{align*}
G(r)\ge  |\beta_1|^{\f12}.
\end{align*}

Finally, we prove the third case. Let $H(r)=c_3{(1+r^2)}+c_4r_1^4(\sigma(r)-\nu_1)$. Then we have
\begin{align*}
H(r)\geq c_3(1+r^2)+\frac{c_4r_1^4}{C(1+r^2)}-\frac{Cc_4r_1^4}{r_1^2}
\geq C^{-1}(c_3c_4)^{\frac{1}{2}}r_1^2-{Cc_4r_1^2}=r_1^2\ge |\beta_1|^\f13.
\end{align*}

The proof is finished.
\end{proof}

Similar to Lemma \ref{lem:beta-med}, we have

\begin{lemma}\label{lem:beta-high}
Let $0<\nu_k<1$ and $\sigma(r_k)=\nu_k$. There exist constants {$c_i\sim1(i=1,\cdots,4)$} such that for any $r>0$, we have
\begin{itemize}

\item[1.] if  $r_k\leq 1$ and $|k|^3\leq|\beta_k|\leq\f{|k|^3}{r_k^4}$, then
\beno
 c_1\f{{k^2}}{r^2}+c_2|\beta_k|(\nu_k-\sigma(r))\geq |\beta_k|^{\f12};
\eeno

\item[2.] if  $r_k\geq \sqrt{k}$ and $|k|^3\leq|\beta_k|\leq r_k^4$, then
\begin{align*}
c_3(1+r^2)+c_4|\beta_k|({\sigma(r)/2}-\nu_k)\geq |\beta_k|^{\f12};
\end{align*}

\item[3.] if  $r_k\geq \sqrt{k}$ and $r_k^4\leq|\beta_k|\leq r_k^6$, then
\begin{align*}
c_3(1+r^2)+c_4{r_k^4}({\sigma(r)/2}-\nu_k)\geq |\beta_k|^{\f13}.
\end{align*}

\end{itemize}
\end{lemma}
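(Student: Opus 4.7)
All three parts will follow the template of the proof of Lemma~\ref{lem:beta-med}, with the extra powers of $k$ absorbed into the auxiliary functions of $r$ whose global minima we locate.

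For part~1, set
\[
F(r) = c_1 \frac{k^2}{r^2} + c_2 |\beta_k|\bigl(\sigma(r_k) - \sigma(r)\bigr)
\]
and compute
\[
F'(r) = \frac{1}{r^3}\Bigl[c_2 |\beta_k|\bigl(-r^3\sigma'(r)\bigr) - 2c_1 k^2\Bigr].
\]
The identity $(r^3\sigma'(r))' = -r^3 g(r)^2 < 0$ (which appears in the proof of Lemma~\ref{lem:TB-relation}) shows that $-r^3 \sigma'(r)$ increases strictly from $0$ to $8$, so $F'$ changes sign exactly once and $F$ has a unique global minimum at some $r_0$. The critical-point relation $2c_1 k^2 = -c_2|\beta_k|\sigma'(r_0)r_0^3$ combined with the asymptotic $|\sigma'(r)| \sim r/4$ on $r \le 1$ identifies $r_0 \sim (k^2/|\beta_k|)^{1/4}$ (up to a universal factor fixed by the ratio $c_1/c_2$). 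Under the hypotheses $r_k \le 1$ and $|k|^3 \le |\beta_k| \le |k|^3/r_k^4$, select $c_1,c_2\sim 1$ so that $r_0$ lies in the interval $[r_k, 1]$, which forces $\sigma(r_0) \le \nu_k$ and renders the second term of $F(r_0)$ non-negative. One then concludes
\[
F(r) \ge F(r_0) \ge \frac{c_1 k^2}{r_0^2} \gtrsim k|\beta_k|^{1/2} \ge |\beta_k|^{1/2}.
\]

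For parts~2 and~3, introduce $G(r) = c_3 (1+r^2) + c_4 \Lambda \bigl(\sigma(r)/2 - \nu_k\bigr)$ with $\Lambda = |\beta_k|$ in part~2 and $\Lambda = r_k^4$ in part~3. Using the lower bound $\sigma(r) \gtrsim (1+r^2)^{-1}$ and the AM-GM inequality,
\[
c_3(1+r^2) + \frac{c_4 \Lambda}{2C(1+r^2)} \ge \frac{2}{\sqrt{2C}} \sqrt{c_3 c_4 \Lambda}.
\]
The hypothesis $r_k \ge \sqrt{k} \ge 1$ yields $\nu_k = \sigma(r_k) \lesssim r_k^{-2}$, so the error term is controlled by $c_4 \Lambda \nu_k \lesssim c_4 \Lambda/r_k^2$; the constraint $\Lambda \le r_k^4$ then gives $c_4 \Lambda/r_k^2 \le c_4 \Lambda^{1/2}$. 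Picking $c_3, c_4 \sim 1$ with $(2/\sqrt{2C})\sqrt{c_3 c_4} - Cc_4 \ge 1$ finally produces $G(r) \ge \Lambda^{1/2}$, which is exactly $|\beta_k|^{1/2}$ in part~2 and, thanks to $r_k^2 \ge |\beta_k|^{1/3}$ coming from $|\beta_k| \le r_k^6$, becomes $|\beta_k|^{1/3}$ in part~3.

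The principal technical hurdle is the bracketing $r_k \le r_0 \le 1$ in part~1: the numerical exponents $|k|^3$ and $|k|^3/r_k^4$ in the hypothesis must be matched precisely to the leading-order Taylor behavior $|\sigma'(r)| \sim r/4$ near zero, so that the coercive term $c_1 k^2/r_0^2$ alone beats the error against $|\beta_k|^{1/2}$. Parts~2 and~3 are by contrast essentially a direct AM-GM once the constants have been balanced, exactly as in the corresponding cases of Lemma~\ref{lem:beta-med}.
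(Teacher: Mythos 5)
Your parts 2 and 3 track the proof of Lemma~\ref{lem:beta-med} correctly: the AM--GM argument closes because $\nu_k\lesssim r_k^{-2}$ together with $\Lambda\le r_k^4$ bounds the error term $c_4\Lambda\nu_k$ by $C c_4\Lambda^{1/2}$, and the constants can then be balanced exactly as in the paper's cases~2 and~3 for $k=1$.

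Part~1, however, has a genuine gap at the step ``select $c_1,c_2\sim 1$ so that $r_0$ lies in the interval $[r_k,1]$''. As you observed, the critical point is $r_0=(8c_1/c_2)^{1/4}(k^2/|\beta_k|)^{1/4}$ up to lower-order corrections in $r_0$. But the hypothesis $|\beta_k|\le|k|^3/r_k^4$ only gives $r_k\le(|k|^3/|\beta_k|)^{1/4}$, so at the upper end $|\beta_k|\sim|k|^3/r_k^4$ one finds $r_0\sim r_k\,|k|^{-1/4}<r_k$, and no choice of universal constants $c_1,c_2\sim 1$ can push $r_0$ back above $r_k$. When $r_0<r_k$ the second term of $F(r_0)$ is strictly negative, and it is not small: using $\sigma(r)=1-r^2/8+O(r^4)$ for $r\le 1$ gives $\sigma(r_0)-\sigma(r_k)\approx(r_k^2-r_0^2)/8\sim r_k^2$, so $c_2|\beta_k|\bigl(\sigma(r_0)-\nu_k\bigr)\sim|\beta_k|r_k^2\sim|k|^3/r_k^2$, whereas the coercive term $c_1k^2/r_0^2\sim|k|^{5/2}/r_k^2$ is smaller by a factor of $|k|^{1/2}$. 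Hence $F(r_0)<0$ once $|k|$ is large, and your concluding chain $F(r)\ge F(r_0)\ge c_1k^2/r_0^2\gtrsim|\beta_k|^{1/2}$ breaks at the second inequality. The ``principal technical hurdle'' you flag at the end is precisely this bracketing, and the proposed resolution --- matching the exponents in $|k|^3/r_k^4$ to the Taylor behaviour of $\sigma'$ --- does not in fact force $r_0\ge r_k$; you would need $c_1/c_2\gtrsim|k|$, which violates $c_i\sim 1$. The argument for part~1 therefore needs a different mechanism (or a modification of the stated bound) to handle the regime $r_0<r_k$.
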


\section*{Acknowledgement}

Z. Zhang is partially supported by NSF of China under Grant 11425103.

\end{document}